\theoremstyle{plain}
\newtheorem*{thm*}{Theorem}
\newtheorem{lem}{Lemma}
\newtheorem*{lem*}{Lemma}
\newtheorem*{defi*}{Definition}
\newtheorem*{rques*}{Remarks}
\DeclareMathOperator{\sign}{sign}
\newcommand{\mb}{\mathbb}
\newcommand{\Tr}{\operatorname{Tr}}
\newcommand{\diag}{\operatorname{diag}}
\newcommand{\E}{{\bf E}}
\newcommand{\R}{{\mathcal R}}
\newtheorem{theorem}{Theorem}[section]
\newtheorem{corollary}{Corollary}[section]
\newtheorem{prop}{Proposition}[section]
\newcommand{\simo}[1]{{\color{red}#1}}
\newcommand{\yiqiu}[1]{{\color{blue}#1}}
\begin{document}

\begin{frontmatter}
\title{Minimax Supervised Clustering in the Anisotropic Gaussian Mixture Model: A new take on Robust Interpolation}
\runtitle{ Clustering in the Anisotropic Gaussian Mixture Model}
%\thankstext{T1}{Footnote to the title with the ``thankstext'' command.}

%\begin{aug}
%%%%%%%%%%%%%%%%%%%%%%%%%%%%%%%%%%%%%%%%%%%%%%%
%% Only one address is permitted per author. %%
%% Only division, organization and e-mail is %%
%% included in the address.                  %%
%% Additional information can be included in %%
%% the Acknowledgments section if necessary. %%
%%%%%%%%%%%%%%%%%%%%%%%%%%%%%%%%%%%%%%%%%%%%%%%
%\author[A]{\fnms{???} \snm{???}\ead[label=e1]{???@???}},
%\author[B]{\fnms{???} %%\snm{???}\ead[label=e2,mark]{???@???}}
%\and
%\author[B]{\fnms{???} %\snm{???}\ead[label=e3,mark]{???@???}}
%%%%%%%%%%%%%%%%%%%%%%%%%%%%%%%%%%%%%%%%%%%%%%
%% Addresses                                %%
%%%%%%%%%%%%%%%%%%%%%%%%%%%%%%%%%%%%%%%%%%%%%%
%\address[A]{???, \printead{e1}}

%%\address[B]{???, \printead{e2,e3}}
%\end{aug}

\begin{aug}
\author[A]{\fnms{Stanislav} \snm{Minsker}\ead[label=e2]{minsker@usc.edu}},
\author[B]{\fnms{Mohamed} \snm{Ndaoud}
\ead[label=e3]{ndaoud@essec.edu}}
\and
\author[C]{\fnms{Yiqiu} \snm{Shen}\ead[label=e1]{yiqiushe@usc.edu}}
%\thankstext{t1}{Some comment}
%\thankstext{t2}{First supporter of the project}
%\thankstext{t3}{Second supporter of the project}
%\runauthor{F. Author et al.}

%\affiliation{University of Southern California\thanksmark{m1} and ESSEC Business School\thanksmark{m2}}

\address[A]{Department of Mathematics\\
University of Southern California\\
Los Angeles, CA 90089 \\
\printead{e2}}

\address[B]{Department of Information Systems, Decision Sciences and Statistics\\
ESSEC Business School\\
95000 Cergy, France\\
\printead{e3}}

\address[C]{Department of Data Sciences and Operations\\
University of Southern California\\
Los Angeles, CA 90089\\
\printead{e1}}
%\phantom{E-mail:\ }\printead*{e2}}

\end{aug}

\begin{abstract}
We study the supervised clustering problem under the two-component anisotropic Gaussian mixture model in high dimensions and in the non-asymptotic setting. We first derive a lower and a matching upper bound for the minimax risk of clustering in this framework. We also show that in the high-dimensional regime, the linear discriminant analysis (LDA) classifier turns out to be sub-optimal in the minimax sense. Next, we characterize precisely the risk of $\ell_2$-regularized supervised least squares classifiers.  We deduce the fact that the interpolating solution may outperform the regularized classifier, under mild assumptions on the covariance structure of the noise. Our analysis also shows that interpolation can be robust to corruption in the covariance of the noise when the signal is aligned with the ``clean'' part of the covariance, for the properly defined notion of alignment. To the best of our knowledge, this peculiar phenomenon has not yet been investigated in the rapidly growing literature related to interpolation. We conclude that interpolation is not only benign but can also be optimal, and in some cases robust.
\end{abstract}

%\begin{keyword}[class=MSC]
%\kwd[Primary ]{60K35}
%kwd{60K35}
%\kwd[; secondary ]{60K35}
%\end{keyword}

%\begin{keyword}
%\kwd{sample}
%\kwd{\LaTeXe}
%\end{keyword}

\end{frontmatter}

\section{Introduction}
The topic of overparametrization has gained tremendous attention in recent literature devoted to the problems in high dimensional statistics. Previously, it was widely believed that regularization yields the best generalization power. Recently, it was discovered that estimators that interpolate the training data also yield good generalization error bounds when the number of covariates exceeds the sample size. This phenomenon, termed ``benign overfitting'' in \cite{bartlett2020benign}, has been extensively investigated in the regression setting. In this work, we study the problem of clustering. In particular, we derive the bounds for the generalization error under different sets of assumptions. The model we consider is a binary sub-Gaussian mixture model with unknown, anisotropic noise.

\subsection{Statement of the problem}
Consider the simple two-component Gaussian mixture model, where we observe for all $i=1,\dots,n$ the pairs $(Y_i,\eta_i)$ such that
\begin{equation}\label{eq:model}
Y_i = \theta\eta_i + W_i
\end{equation}
where $\theta \in \mb R ^p$ is a center vector, $\eta = (\eta_1,\ldots,\eta)^T \in \{-1,1\}^n$ a label vector and $W$ a random matrix with i.i.d columns $W_i, \ i=1,\ldots,n$ that are sub-Gaussian. More precisely, given the spectral decomposition $\Sigma = V \Lambda V^\top$, we assume that
\[
W_i = V \Lambda^{1/2} w_i,
\] 
where $w_i$ has components that
are independent and 
$1$-sub-Gaussian; that is, for all $\lambda \in \mathbb{R}^{p}$,
\[
\mathbf{E}(\exp(\lambda^\top w_1)) \leq \exp(\|\lambda\|^2 /2)
\] 
where $\|\cdot\|$ denotes the Euclidean norm. Moreover, we will always assume that $\Sigma$ has full rank.
%or, equivalently, for all $i=1,\dots,n$
%\[
%\eta_i Y_i = \theta  + \eta_i W_i.
%\] 
We mostly focus on the supervised setting, where we are given a classifier $\hat{\eta}$ based on the training data $(\mathbf{Y},\mathbf{\eta}) = (Y_1,\eta_1),\ldots,(Y_n,\eta_n)$ and a new independent observation $(Y_{n+1},\eta_{n+1})$ such that $\eta_{n+1}$ is a Rademacher random variable taking values $\pm 1$ with probability $1/2$ each. We want to analyze the generalization error defined as
\[
\mathcal{R}_{\Sigma}(\hat{\eta}) := \mb P\left( \hat{\eta}((\mathbf{Y,\eta});Y_{n+1}) \neq \eta_{n+1}  |{(\mathbf{ Y,\eta})}\right),
\]
both in probability and expectation, where $\mb P$ is the probability under the model described above. Observe that
\[
\mathbf{E}(\mathcal{R}_{\Sigma}(\hat{\eta})) = \mb P\left( \hat{\eta}((\mathbf{Y,\eta});Y_{n+1}) \neq \eta_{n+1}  \right).
\]
When there is no ambiguity, we will omit the subscript $\Sigma$ from $\mathcal{R}_{\Sigma}(\hat{\eta})$. In particular, we want to analyze the minimax risk
\[
\underset{\hat{\eta}}{\inf}\underset{\|\theta\| \geq \Delta }{\sup} \mathbf{E}(\mathcal{R}_{\Sigma}(\hat{\eta})),
\]
and study the necessary and sufficient conditions on $\Delta$ for consistent clustering, i.e. conditions on $(\Delta_n)_n$ such that
\[
\underset{\hat{\eta}}{\inf}\underset{\|\theta\| \geq \Delta_n }{\sup}\mathbf{E}(\mathcal{R}_{\Sigma}(\hat{\eta})) \underset{n \to \infty}{\to} 0.
\]
The case when $\Sigma = \mathbf{I}_p$ was investigated in \cite{ndaoud2018sharp}. In particular, it was shown that
\[
\underset{\hat{\eta}}{\inf}\underset{\|\theta\| \geq \Delta }{\sup} \mathbf{E}(\mathcal{R}_{\Sigma}(\hat{\eta})) \approx \exp\left(-(1+o_n(1))\frac{\Delta^4}{2(\Delta^2 + \frac{p}{n})}\right).
\]
When $\Sigma$ is known and the noise is normal, is it easy to see that $\Sigma^{-1/2}Y$ follows the isotropic Gaussian Mixture Model where the signal vector is given by $\Sigma^{-1/2}\theta$. Following the reasoning similar to \cite{ndaoud2018sharp}, we can show in the general case that
\[
\underset{\hat{\eta}}{\inf}\underset{\|\theta\|_{\Sigma} \geq \Delta }{\sup} \mathbf{E}(\mathcal{R}_{\Sigma}(\hat{\eta}))  \approx \exp\left(-(1+o_n(1))\frac{\Delta^4}{2(\Delta^2 + \frac{p}{n})}\right),
\]
where $\|\theta\|^2_{\Sigma} = \theta^\top \Sigma^{-1} \theta$. In particular, the condition $\|\theta\|^2_{\Sigma} \gg \sqrt{p/n} + 1$ is necessary and sufficient for consistency under the norm $\|\cdot\|_\Sigma$. 
Moreover, the minimax optimal classifier is the Linear Discriminant Analysis (LDA) classifier given by 
\[
\hat{\eta}_{\text{LDA}}(y) = \sign\left( \left\langle \Sigma^{-1}\sum_{i=1}^n Y_i \eta_i,y\right\rangle \right).
\]
The LDA estimator and its adaptive variants have been recently studied in several works, including \cite{wang2020efficient,davis2021clustering, chen2021optimal}  and \cite{linjun}. All these papers consider anisotropic mixtures in the low dimensional case. In this project we are interested in the case of unknown $\Sigma$ in high dimensions (i.e. $p \gg n$) where estimation of both $\theta$ and $\Sigma$ becomes challenging. 

We are also interested in the supervised risk $\R(\hat{\eta})$ of certain classifiers $\hat{\eta}$ of the form 
\[
\hat{\eta} := \sign\left( \hat{\theta}^{\top} Y_{n+1} \right),
\]
where $\hat{\theta}$ describes a separating half-space. More precisely, we will focus on the minimum $\ell_2$-norm solutions of the optimization problems corresponding either to the Ordinary Least Squares (OLS) or the Support Vector Machines (SVM). The hard margin SVM classifier  $\hat{\theta}_{\text{SVM}}$ is the solution to the problem
\begin{equation}\label{eq:SVM}
\hat{\theta}_{\text{SVM}} = \arg\min_{\theta\in\mb{R}^p} \|\theta\|^2 \text{ subject to } \eta_i \theta^\top Y_i\geq 1, \quad \forall i=1,\dots,n.
\end{equation}
Similarly, $\hat{\theta}_{\text{OLS}}$ is defined as the solution to 
\begin{equation}\label{eq:OLS}
\hat{\theta}_{\text{OLS}} = \arg\min_{\theta\in\mb{R}^p} \|\theta\|^2 \text{ subject to } \eta_i \theta^\top Y_i = 1, \quad \forall i=1,\dots,n.
\end{equation}
While SVM is more commonly used for classification, both approaches have attracted a lot of attention recently, in part due to the discovery \cite{,hsu2020proliferation} of the phenomenon known as  ``Proliferation of Support Vectors'' (SVP). Specifically, SVP corresponds to the situation when $\hat{\theta}_{\text{SVM}}=\hat{\theta}_{\text{OLS}}$, and occurs typically in the high-dimensional setting.  
%\begin{defi*}(Proliferation of Support Vectors)
%We refer to the situation where  as  ``proliferation of support vectors.''
%\end{defi*}

\subsection{Notation and Definitions}
For $\theta\in \mathbb{R}^p$, we denote its Euclidean norm by $\|\theta\|$ and its Mahalanobis norm corresponding to a positive definite matrix $\Sigma$ by $\|\theta\|_\Sigma:=\theta^\top\Sigma^{-1}\theta$. For a symmetric matrix $A \in\mathbb{R}^{p\times p}$ with eigenvalues $\lambda_1\geq \dots \geq \lambda_p$, we define the spectral norm of $A$ via $\|A\|_\infty:=\lambda_1$ and the Frobenius norm by $\|A\|_F:=\sqrt{\sum_{i=1}^p \lambda_i^2}$. For a positive semidefinite (PSD) matrix $A=\sum_{i=1}^p\lambda_iv_iv_i^\top \neq 0_p, A\in\mathbb{R}^{p\times p}$, we define its effective rank by $r(A):= \Tr(A)/\|A\|_{\infty}$ and its $k$-effective rank by $r_k(A):=\frac{\sum_{i=k+1}^p\lambda_i}{\lambda_{k+1}}$. Moreover, we define the orthogonal projectors  $\pi_{k}(A):=\sum_{i=1}^kv_iv_i^\top$, where $\lambda_1\geq\lambda_2\geq\dots\geq\lambda_p$ is the non-increasing sequence of eigenvalues of $A$. For a given sequences $\{a_n\}_{n\geq 1}$ and $\{b_n\}_{n\geq 1}$, we say that $a_n=\Omega(b_n)$ (resp. $a_n=\mathcal{O}(b_n)$) if for some $c>0$, $a_n\geq cb_n$ (resp. $a_n\leq cb_n$) for all integers $n$ large enough. We will also write $a_n=o(b_n)$ if $a_n / b_n \to 0$ as $n$ goes to $\infty$.

\subsection{Related work}

Recent papers, for instance \cite{belkin2018overfitting} and \cite{belkin2019does}, suggest that interpolating solutions (specifically, solutions that fit the training data perfectly) can achieve optimal rates for the problems of nonparametric regression and $k$-nearest neighbour clustering respectively. Termed ``benign overfitting" by \cite{bartlett2020benign}, this phenomenon has been studied analytically in the framework of linear regression with isotropic noise. It was extended to the anisotropic  case later in \cite{wu2020optimal}, where the authors proposed the fruitful idea of ``alignment" and ``misalignment" between the signal and the covariance matrix of the noise. 

There is a number of recent works exploring the subject of overfitting in regression, while our focus is on the derivation of tight bounds for the misclassification rate for clustering in the framework of anisotopic sub-Gaussian mixture models. In  moderate dimensions ($p \leq n$), this problem has been studied extensively, as in the most recent works  \cite{wang2020efficient,davis2021clustering} and \cite{chen2021optimal}, and efficient approaches such as perturbed gradient descent and modifications of Lloyd's algorithm have been proposed. When overparametrization is possible (corresponding to the case $p \geq n$), support vector proliferation \cite{muthukumar2020classification} has emerged as an important aspect of the analysis of interpolating SVM classifiers. In particular, papers including \cite{ardeshir2021support, wang2020benign} establish sufficient conditions for consistency of SVM interpolation. On the other hand, the work \cite{liang2021interpolating} approaches the topic in a more general setting via analyzing properties of Reproducing Kernel Hilbert Spaces (RKHS). Recent papers \cite{cao2021risk} and \cite{wang2020benign} are closely aligned with our work, but only consider the case when $r(\Sigma) =\Omega(n)$ and no regularization is present.  We summarize their key findings below.
\begin{table}[ht]
\begin{tabular}{|l|l|l|}
\hline
                  &  Wang and Thrampoulidis \cite{wang2020benign} & Cao et al.\cite{cao2021risk}  \\ \hline
SVP conditions  &   $\Tr(\Sigma)>C\left(\|\boldsymbol{\Sigma}\|_{F} \cdot n \sqrt{\log(n)}+\|\Sigma\|_{\infty} \cdot n^{3/2} \log(n)\right)$ & $\Tr({\Sigma}) \geq C \max \left\{n^{3 / 2}\|{\Sigma}\|_{\infty}, n\|{\Sigma}\|_{F}\right\}$ \\                                        &  \text{and} \quad $\Tr(\Sigma)>C_{1} n \sqrt{\log ( n) \theta^\top \Sigma \theta}$  &  \text{and} \quad $\Tr(\Sigma)>C_{1} n \sqrt{\log (n)\theta^\top \Sigma \theta} $ \\ \hline
Error bounds                        &     $\exp \left(\frac{-\left(\|{\theta}\|^{2}-\frac{C_{1} n \theta^\top \Sigma \theta}{\Tr(\Sigma)}-C_{2} \sqrt{\theta^\top \Sigma \theta}\right)^{2}}{C_{3} \max \left\{1, \frac{n^{2} \theta^\top \Sigma \theta}{\Tr(\Sigma)^{2}}\right\}\|\Sigma\|_{F}^{2}+C_{4} \theta^\top \Sigma \theta}\right)$ & $\exp \left(\frac{-C^{\prime} \|{\theta}\|^{4}}{\theta^\top \Sigma \theta+\|{\Sigma}\|_{F}^{2}/n+\|{\Sigma}\|_{\infty}^{2}}\right)$  \\ \hline
\end{tabular}
%\caption{Truth Tables and Accuracy Measures for each modeling library.}
\end{table}

%\begin{center}
%\scalebox{1}{\begin{tabular}{|l|l|l|l|l|}
%\hline
%Reference                                      & Type           & Noise & $p\gg n$& Asymptotic \\ \hline
%\cite{bartlett2020benign}     & Regression    & Isotropic & Yes    &  No       \\ \hline
%%\cite{wu2020optimal}          & Regression     & Anisotropic      & Yes &Yes       \\ \hline
%\cite{ndaoud2018sharp}        & Classification & Isotropic  & Yes   &No  \\ \hline
%\cite{wang2020benign}         & Classification & Anisotropic     & Yes    & No    \\ \hline
%\cite{wang2020efficient}      & Classification & Anisotropic      & No &   No   \\ \hline
%\cite{liang2021interpolating} & Classification & Isotropic      & Yes     &   No \\ \hline
%\cite{chen2021optimal}        & Classification & Anisotropic     & No    &   Yes \\ \hline
%\cite{chatterji2020finite}        & Classification & Anisotropic     & Yes    &   No \\ \hline
%\cite{mai2019high}        & Classification & Anisotropic     &  Yes    &   Yes \\ \hline
%\cite{cao2021risk}        & Classification & Anisotropic     & Yes    &   No \\ \hline
%\textbf{Our Work}                        & Classification & Anisotropic      & Yes & No       \\ \hline
%\end{tabular}}
%%\captionof{table}{Summary of the related work. }
%\end{center}

\subsection{Contribution} 
Our main contributions are summarized below and compared with the previous state of the art.
\begin{itemize}
\item First, we derive the minimax bounds for the generalization risk of clustering in the anisotropic sub-Gaussian model, and show that the averaging classifier, defined in Section \ref{sec:ave},  is adaptive and  minimax optimal.
\item We study the risk of the regularized least squares classifiers and show that, under mild assumptions, the interpolating solution $\hat{\eta}_{\text{OLS}}$ is minimax optimal, leading to a better bound than the previous works.  We also expose interesting cases where the interpolating solution may outperform regularized classifiers, under mild assumptions on the covariance structure of the noise.
\item Next, we show that the SVP phenomenon occurs under the mild conditions: $r(\Sigma) =\Omega(n\log(n))$ and $\Tr(\Sigma)=\Omega( n \sqrt{\log (n)\theta^\top \Sigma \theta} )$. As a consequence, we derive risk bounds for the hard-margin SVM classifier. The aforementioned conditions are strictly better than previously known ones, as the latter require that $r(\Sigma) =\Omega( n^{3/2}\log(n))$.
\item Finally, we propose the framework where the covariance of the noise can be corrupted, and show that interpolation leads to robust minimax optimal classifiers over a large class of signal vectors in this case, while both  the averaging and the LDA estimators fail. Hence not only interpolation is benign, but it can also be  optimal and robust.
\end{itemize}

\section{Minimax clustering: the supervised case}
\label{sec:ave}
In this section we consider the supervised clustering problem with Gaussian noise with unknown covariance $\Sigma$. Our upper bounds are still valid for the sub-Gaussian noise, but we only consider the Gaussian case and focus on other important aspects of the problem instead. We first state a lower bound. 
\begin{theorem}\label{thm:minimax} Let $\Delta,\lambda,r>0$. Then
\[
\underset{\hat{\eta}}{\inf}\underset{r(\Sigma^2)=r, \|\Sigma\|_\infty = \lambda }{\sup} \; \underset{\|\theta\|^2 \geq \Delta^2 \lambda }{\sup} \E(\R_{\Sigma}(\hat{\eta}))  \geq C \exp\left(-c\frac{\Delta^4}{\Delta^2 + \frac{r}{n}}\right),
\]
for some absolute constants $c,C>0$ and where the infimum is taken over all measurable classifiers.
\end{theorem}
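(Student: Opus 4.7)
The plan is to reduce the anisotropic lower bound to the sharp minimax lower bound for the isotropic Gaussian mixture model established in \cite{ndaoud2018sharp}, by constructing an explicit sub-family of $(\theta, \Sigma)$ lying inside the allowed parameter space, on which the minimax problem decouples into an essentially isotropic problem of effective dimension roughly $r$.

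Concretely, I would fix an integer $k \leq r$ (close to $\lfloor r \rfloor$) and consider the diagonal covariance
\[
\Sigma = \diag(\underbrace{\lambda, \dots, \lambda}_{k}, \underbrace{\epsilon, \dots, \epsilon}_{p - k}),
\]
where $\epsilon > 0$ is chosen so that $r(\Sigma^2) = k + (p-k)(\epsilon/\lambda)^2 = r$ exactly. Such an $\epsilon$ exists as long as $r > k$ (with a harmless adjustment of $k$ by one when $r$ is an integer), and yields a full-rank $\Sigma$ with $\|\Sigma\|_\infty = \lambda$. I would then restrict the signal $\theta$ to be supported on the first $k$ coordinates, still subject to $\|\theta\|^2 \geq \Delta^2 \lambda$.

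On this sub-family, the last $p-k$ coordinates of each $Y_i$ are noise independent of $\eta_i$ and of the remaining coordinates (thanks to Gaussianity and the diagonal form of $\Sigma$), so they can be discarded by sufficiency. The first $k$ coordinates of $Y_i$ follow an isotropic Gaussian mixture with noise variance $\lambda$. Rescaling $Y \mapsto Y/\sqrt{\lambda}$ converts the reduced problem into a standard isotropic model with unit variance in dimension $k$ and signal norm $\|\theta/\sqrt{\lambda}\|^2 \geq \Delta^2$. Applying the isotropic minimax lower bound of \cite{ndaoud2018sharp} in dimension $k$ yields
\[
\inf_{\hat{\eta}} \sup_{\|\theta'\|^2 \geq \Delta^2} \E(\R_{I_k}(\hat{\eta})) \gtrsim \exp\!\left(-c \frac{\Delta^4}{\Delta^2 + k/n}\right),
\]
and since $k \leq r$, this gives the claimed bound.

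The main obstacle is arguing that the reduction to the $k$-dimensional isotropic problem is lossless, i.e.\ that no classifier using the full $p$-dimensional data can beat the best one using only the first $k$ coordinates. This follows from sufficiency because, under this particular diagonal construction, the orthogonal complement of the signal subspace is pure noise, independent of both the labels and the first $k$ coordinates. A secondary technicality is the precise tuning of $k$ and $\epsilon$ so that $r(\Sigma^2) = r$ exactly while $\Sigma$ remains full rank, which only affects the absolute constants $c, C$ in the final bound.
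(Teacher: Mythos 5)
Your reduction is a genuinely different, and legitimate, route to the lower bound. The paper does not pass through the isotropic result as a black box: it fixes a general diagonal $\Sigma=\diag(d_1,\dots,d_p)$, splits the bound into a dimension-independent part ($Ce^{-c\Delta^2}$, obtained via Neyman--Pearson for a single worst-case $\bar\theta$ aligned with the top eigenvector) and a dimension-dependent part ($Ce^{-cn\Delta^4/r}$, obtained via a Bayes-risk argument with the $\Sigma$-adapted Gaussian prior $\theta\sim\mathcal{N}(0,D)$, $D_{jj}\propto d_j^2$), and it computes the Bayes-optimal classifier explicitly through rank-one inverse formulas. Your two-block construction $\Sigma=\diag(\lambda,\dots,\lambda,\epsilon,\dots,\epsilon)$ with $\theta$ supported on the top block collapses all of this into one application of the known isotropic bound in dimension $k$; the sufficiency argument for discarding the pure-noise coordinates is standard and correct, as is the observation that restricting both $\Sigma$ and the support of $\theta$ can only decrease the supremum. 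What you lose is self-containedness: you need the isotropic lower bound of \cite{ndaoud2018sharp} in a non-asymptotic, supervised form with explicit constants (the version quoted in the introduction carries a $(1+o_n(1))$), whereas the paper's direct argument manufactures the absolute constants itself. What you gain is brevity and a transparent description of a hard sub-family.

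One step needs repair. You write that the claimed bound follows ``since $k\leq r$,'' but $k\leq r$ gives $\exp(-c\Delta^4/(\Delta^2+k/n))\leq \exp(-c\Delta^4/(\Delta^2+r/n))$, i.e.\ the inequality in the wrong direction: a lower bound with $k/n$ in the denominator is \emph{weaker} than one with $r/n$ when $k<r$. What you actually need is $k\geq c_0 r$ for an absolute $c_0>0$, so that $\Delta^2+k/n\geq c_0(\Delta^2+r/n)$ and the bound transfers at the cost of inflating the constant in the exponent. This does hold for your choice $k\approx\lfloor r\rfloor$, since then $k\geq r/2$ for $r\geq 2$ and $k=1\geq r/2$ for $1\leq r<2$ (for $r<1$ the constraint set is empty and the statement is vacuous). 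So the conclusion stands, but the stated justification is backwards and should be replaced by the comparability $k\asymp r$.
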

Notice that the inequality is stated in terms of the Euclidean norm of $\theta$, and not the Mahalanobis norm that would lead to a different lower bound. This particular choice is motivated by the fact that we are seeking bounds that hold adaptively for large classes of $\Sigma$ which is typically unknown in practical applications. The proof of the lower bound is inspired by the argument in \cite{ndaoud2018sharp} that only holds for isotropic noise. As for the upper bound, we show that the ``averaging" linear classifier defined as
\[
\hat{\eta}_{\text{ave}}(y) := \sign\left( \left\langle \sum_{i=1}^n Y_i\eta_i,y \right\rangle \right),
\]
is minimax optimal. In fact, we will prove the following inequality.
\begin{theorem}\label{thm:upper} Let $\Delta>0$. For any covariance matrix $\Sigma$ we have, with probability at least $1-\delta - \ e^{-cn}$, that
\[
 \R(\hat{\eta}_{\text{ave}})  \leq C 
 \exp\left(-c\frac{\|\theta\|^4}{\theta^{\top}\Sigma \theta+ \frac{\Tr(\Sigma^2) + \|\Sigma\|^2_{\infty}\log(1/\delta)}{n} }\right).
\]
for some absolute constants $c,C>0$. Moreover,
\[
\underset{\|\theta\|^2 \geq \Delta^2 \|\Sigma\|_\infty }{\sup} \E(\R(\hat{\eta}_{\text{ave}})  )\leq C \exp\left(-c\frac{\Delta^4}{\Delta^2+ \frac{r(\Sigma^2)}{n}}\right).
\]
\end{theorem}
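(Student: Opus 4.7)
The plan is to reduce the conditional generalization risk to a one-dimensional Gaussian tail, then apply concentration inequalities for Gaussian linear and quadratic forms. Write the averaging classifier as $\hat\eta_{\text{ave}}(y) = \sign(\hat\theta^\top y)$ with $\hat\theta := \sum_{i=1}^n \eta_i Y_i = n\theta + Z$, where $Z := \sum_{i=1}^n \eta_i W_i$ is a centered Gaussian vector with covariance $n\Sigma$ (using $\eta_i^2 = 1$ and independence of the columns). Since $Y_{n+1} = \theta \eta_{n+1} + W_{n+1}$ is independent of the training data, conditional on $\hat\theta$ we have $\eta_{n+1}\hat\theta^\top Y_{n+1} \sim N(\hat\theta^\top \theta,\; \hat\theta^\top \Sigma \hat\theta)$ (independent of $\eta_{n+1}$ by the symmetry $\eta_{n+1}W_{n+1} \stackrel{d}{=} W_{n+1}$), so on the event $\{\hat\theta^\top \theta > 0\}$ the standard Gaussian tail gives
\[
\R(\hat\eta_{\text{ave}}) = \Phi\!\left(-\frac{\hat\theta^\top \theta}{\sqrt{\hat\theta^\top \Sigma \hat\theta}}\right) \leq \exp\!\left(-\frac{(\hat\theta^\top \theta)^2}{2\,\hat\theta^\top \Sigma \hat\theta}\right).
\]

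Next I would control numerator and denominator separately. Expanding $\hat\theta^\top \theta = n \|\theta\|^2 + Z^\top \theta$ and using $Z^\top \theta \sim N(0, n\, \theta^\top \Sigma \theta)$, a Gaussian tail bound yields $\hat\theta^\top \theta \geq n\|\theta\|^2 / 2$ outside an event of probability $\leq 2\exp(-n\|\theta\|^4/(8\theta^\top \Sigma \theta))$; in the regime $\|\theta\|^4 \gtrsim \theta^\top \Sigma \theta$ this is at most $2 e^{-cn}$, whereas in the complementary regime the target exponent is $O(1)$ and the bound becomes trivial by choosing $C$ large. For the denominator, expand
\[
\hat\theta^\top \Sigma \hat\theta = n^2 \theta^\top \Sigma \theta + 2n\, \theta^\top \Sigma Z + Z^\top \Sigma Z,
\]
and bound the cross term via $\theta^\top \Sigma Z \sim N(0, n\, \theta^\top \Sigma^3 \theta)$ using $\theta^\top \Sigma^3 \theta \leq \|\Sigma\|_\infty^2 \theta^\top \Sigma \theta$, absorbing it by AM-GM into $n^2 \theta^\top \Sigma \theta + n \|\Sigma\|_\infty^2 \log(1/\delta)$. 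The quadratic form $Z^\top \Sigma Z = n\, g^\top \Sigma^2 g$ with $g \sim N(0, I_p)$ is handled by Hanson--Wright (equivalently Laurent--Massart), giving $Z^\top \Sigma Z \lesssim n(\Tr(\Sigma^2) + \|\Sigma\|_\infty^2 \log(1/\delta))$ with probability $1-\delta$. Combining,
\[
\hat\theta^\top \Sigma \hat\theta \lesssim n^2 \!\left( \theta^\top \Sigma \theta + \frac{\Tr(\Sigma^2) + \|\Sigma\|_\infty^2 \log(1/\delta)}{n} \right)
\]
on an event of probability at least $1 - \delta - e^{-cn}$, which substituted into the Gaussian tail bound yields the first displayed inequality of the theorem.

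For the second (in-expectation) inequality, parametrize $\|\theta\|^2 = s \|\Sigma\|_\infty$ with $s \geq \Delta^2$. Since $\theta^\top \Sigma \theta \leq \|\Sigma\|_\infty \|\theta\|^2 = s \|\Sigma\|_\infty^2$ and $r(\Sigma^2) = \Tr(\Sigma^2)/\|\Sigma\|_\infty^2$, the exponent from the first part (at any fixed $\delta$) satisfies
\[
\frac{\|\theta\|^4}{\theta^\top \Sigma \theta + (\Tr(\Sigma^2) + \|\Sigma\|_\infty^2 \log(1/\delta))/n} \geq \frac{s^2}{s + (r(\Sigma^2) + \log(1/\delta))/n},
\]
which is monotone increasing in $s$, hence bounded below by its value at $s = \Delta^2$. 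To pass from the high-probability bound to the expectation, I would choose $\delta$ so that $\log(1/\delta) \asymp r(\Sigma^2)$ (making the logarithmic term comparable to the Hanson--Wright ``typical'' term); the residual tail probability $\delta + e^{-cn} = e^{-r(\Sigma^2)} + e^{-cn}$ is then absorbable into $C$ provided $\Delta^2 \lesssim r(\Sigma^2)/n$ (noise-dominated regime), while in the signal-dominated regime the exponent itself dominates both tail contributions.

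The main obstacle is this final bookkeeping: one must exhibit a choice of $\delta$ (or an integration scheme such as a layer-cake argument) that simultaneously preserves the desired exponent $\Delta^4/(\Delta^2 + r(\Sigma^2)/n)$ and absorbs all error terms into the constant $C$, uniformly across both regimes. A secondary subtlety is the sign event $\{\hat\theta^\top \theta \leq 0\}$, which must be controlled before the Gaussian tail can be invoked; this is handled by the branching on $\|\theta\|^4/\theta^\top\Sigma\theta$ described above and is precisely where the $e^{-cn}$ tail in the statement originates.
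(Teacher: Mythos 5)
Your overall route is the same as the paper's: reduce to the conditional tail $\exp\bigl(-(\hat\theta^\top\theta)^2/(2\hat\theta^\top\Sigma\hat\theta)\bigr)$, lower-bound the numerator on an event of probability $1-e^{-cn\|\theta\|^4/\theta^\top\Sigma\theta}$ (with the same branching on whether $\|\theta\|^4\gtrsim\theta^\top\Sigma\theta$), and upper-bound the denominator via Hanson--Wright on the quadratic form. The paper avoids your cross term $2n\,\theta^\top\Sigma Z$ entirely by writing $\hat\theta^\top\Sigma\hat\theta\le 2\theta^\top\Sigma\theta+2\xi^\top\Sigma\xi$, but your AM--GM treatment of it is equally valid. (One cosmetic point: the theorem is claimed for sub-Gaussian noise, so the exact identities $\R=\Phi(-\cdot)$ and $Z\sim N(0,n\Sigma)$ should be replaced by the sub-Gaussian MGF bound and the sub-Gaussian Hanson--Wright inequality, as the paper does; this changes nothing structural.)

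The one step that does not go through as written is your choice of $\delta$ for the in-expectation bound. Taking $\log(1/\delta)\asymp r(\Sigma^2)$ leaves a residual failure probability $e^{-r(\Sigma^2)}$, and your claim that in the signal-dominated regime ``the exponent itself dominates both tail contributions'' is backwards: when $\Delta^2\gg r(\Sigma^2)/n$ the target exponent is $\Delta^4/(\Delta^2+r(\Sigma^2)/n)\asymp\Delta^2$, so absorbing $\delta$ into $C\exp(-c\Delta^2)$ would require $r(\Sigma^2)\gtrsim\Delta^2$, which fails whenever the effective rank is small and the signal is strong (e.g.\ $r(\Sigma^2)=O(1)$ and $\Delta^2=100$ gives $\delta$ of constant order against a target of $e^{-100c}$). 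The fix, which is what the paper does, is to take $\log(1/\delta)=cn\Delta^2$. Then the failure probability satisfies
\[
e^{-cn\Delta^2}\;\le\; e^{-c\Delta^2}\;\le\; e^{-c\Delta^4/(\Delta^2+r(\Sigma^2)/n)},
\]
while the extra denominator term it creates, $\|\Sigma\|_\infty^2\log(1/\delta)/n=c\Delta^2\|\Sigma\|_\infty^2\le c\|\Sigma\|_\infty\|\theta\|^2$, is dominated (up to constants) by the term $\|\Sigma\|_\infty\|\theta\|^2\ge\theta^\top\Sigma\theta$ already present; dividing through by $\|\Sigma\|_\infty^2$ and setting $s=\|\theta\|^2/\|\Sigma\|_\infty\ge\Delta^2$, your monotonicity argument for $s\mapsto s^2/(s+r(\Sigma^2)/n)$ then closes the proof. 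With that substitution your proposal is complete.
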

Theorem~\ref{thm:upper} provides a matching upper bound to Theorem~\ref{thm:minimax}. In particular, it implies that for consistency under the Euclidean norm, it suffices to assume that $\|\theta\|^2 \gg \|\Sigma\|_{\infty}( \sqrt{r(\Sigma^2)/n}+1)$. 
Among other conclusions, it yields that from a minimax perspective, the averaging classifier outperforms the LDA one. This fact is stated explicitly next.
\begin{prop}\label{prop:LDA} Let $p \geq n$. Then for any $\theta\in \mb R^p$
\[
 \E(\R(\hat{\eta}_{\text{LDA}})  )\geq C \exp\left(-c\frac{\|\theta\|^4_{\Sigma}}{\|\theta\|^2_{\Sigma}+ \frac{p}{n}}\right),
\]
for some $c,C>0$.
\end{prop}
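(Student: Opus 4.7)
The natural approach is to reduce the anisotropic LDA problem to an isotropic one by whitening: set $\tilde Y_i := \Sigma^{-1/2}Y_i$, $\tilde\theta := \Sigma^{-1/2}\theta$ and $\tilde W_i := \Sigma^{-1/2}W_i$, so that $\tilde W_i \sim \mathcal{N}(0,I_p)$ and $\tilde Y_i = \tilde\theta\,\eta_i + \tilde W_i$ follows the isotropic two-component Gaussian mixture model with signal of Euclidean norm $m := \|\tilde\theta\| = \|\theta\|_\Sigma$. The identity
\[
 \bigl\langle \Sigma^{-1}\textstyle\sum_{i=1}^n Y_i\eta_i,\,Y_{n+1}\bigr\rangle \,=\, \bigl\langle \textstyle\sum_{i=1}^n \tilde Y_i\eta_i,\,\tilde Y_{n+1}\bigr\rangle
\]
shows that $\R(\hat\eta_{\text{LDA}})$ in the original model equals the risk of the averaging classifier in the whitened isotropic model. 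It therefore suffices to prove the lower bound $\E[\R(\hat\eta_{\text{ave}})]\geq C\exp\!\bigl(-c\,m^4/(m^2+p/n)\bigr)$ in that isotropic setting.

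For this, I condition on the training data. Set $\hat\mu := \tilde\theta + \bar W$ with $\bar W := \frac1n\sum_{i=1}^n \tilde W_i\eta_i \sim \mathcal{N}(0,\,I_p/n)$. Conditionally on the training data and on $\eta_{n+1}$, the quantity $\eta_{n+1}\langle\hat\mu,\tilde Y_{n+1}\rangle$ is Gaussian with mean $\langle\hat\mu,\tilde\theta\rangle$ and variance $\|\hat\mu\|^2$, so the conditional error is $\Phi(-T)$ with $T := \langle\hat\mu,\tilde\theta\rangle/\|\hat\mu\|$. Decomposing $\bar W = (g/\sqrt n)(\tilde\theta/m) + \bar W_\perp$ where $g\sim\mathcal{N}(0,1)$ and $\sqrt n\,\bar W_\perp$ is a standard Gaussian on $\tilde\theta^\perp$ independent of $g$, a short computation yields
\[
T \,=\, \frac{m\,(m+g/\sqrt n)}{\sqrt{(m+g/\sqrt n)^2 + \|\bar W_\perp\|^2}}.
\]

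I then work on the event $E := \{|g|\le 1\}\cap\{\|\bar W_\perp\|^2 \ge p/(4n)\}$, which has probability bounded below by an absolute positive constant (the first factor is universal; the second follows from standard chi-squared concentration, using $p\ge n$). On $E$, combining the trivial bound $T^2\le m^2$ (coming from $a^2/(a^2+b^2)\le 1$) with the refined estimate $T^2 \le m^2(m+g/\sqrt n)^2/\|\bar W_\perp\|^2 \le 8m^2(nm^2+1)/p$, and taking the minimum of the two, yields $T^2 \le C\,m^4/(m^2 + p/n)$ up to a harmless additive term of order $1/(np)$, by a brief case analysis on whether $m^2 \gtrless p/n$. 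Applying the universal tail bound $\Phi(-x)\ge c_1 \exp(-c_2 x^2)$ and integrating over $E$ then gives the claim.

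The main obstacle is the reconciliation between the two regimes: the trivial bound $T^2\le m^2$ is sharp when the signal dominates ($m^2 \geq p/n$), while the Cauchy--Schwarz-type bound $m^2\,a^2/b^2$ is sharp in the dense-noise regime ($m^2 \le p/n$); gluing them correctly is what requires the hypothesis $p\ge n$. A secondary subtlety is the very-low-signal sub-regime $nm^2 < 1$, in which both bounds on $T^2$ are loose, but there the target exponent is itself $O(1)$, so any constant lower bound on $\Phi(-T)$ --- which the bound $T^2 \lesssim 1/(np)$ automatically supplies --- already matches the claim.
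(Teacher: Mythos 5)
Your proof is correct and follows essentially the same route as the paper's: whiten by $\Sigma^{-1/2}$ to turn LDA into the averaging classifier in an isotropic model, write the conditional error as a Gaussian tail in $T=\langle\hat\mu,\tilde\theta\rangle/\|\hat\mu\|$, lower-bound that tail by $c_1e^{-c_2T^2}$, and upper-bound $T^2$ on a constant-probability event where $\|\bar W_\perp\|^2\gtrsim p/n$ and the noise component along $\tilde\theta$ is controlled. The only (cosmetic) difference is that you use the exact orthogonal decomposition of $\bar W$ and an explicit formula for $T$, where the paper uses slightly cruder two-sided inequalities for the numerator and denominator before the same case analysis.
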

Observe that for the worst case scenario, the vector $\theta$ is chosen so that $\Sigma \theta = \|\Sigma\|_\infty \theta$, whence $\R\left(\hat{\eta}_{\text{LDA}}\right)$ is sub-optimal.
This phenomenon is only possible in high dimensions. It is in fact easy to see that when $p \ll n$, LDA outperforms the averaging classifier for any given vector $\theta$ whenever consistency is possible. Indeed, in this case
\[
\frac{\|\theta\|^4_{\Sigma}}{\|\theta\|^2_{\Sigma}+ \frac{p}{n}}= \Omega\left( \|\theta\|^2_{\Sigma} \right)   .
\]
Moreover it is always true that
\[
 \|\theta\|^2_{\Sigma}  \geq  \frac{\|\theta\|^4}{\theta^\top \Sigma \theta}.
\]
Hence, whenever consistency is possible and when $p \ll n$, we have
\[
\frac{\|\theta\|^4_{\Sigma}}{\|\theta\|^2_{\Sigma}+ \frac{p}{n}}= \Omega\left(\frac{\|\theta\|^4}{\theta^\top \Sigma \theta +\frac{\Tr{\Sigma^2}}{n}} \right)
\]
Notice that the last statement is stronger than a minimax comparison.
%\end{comment}
%\remark{Minimax clustering: the unsupervised case.} We can actually extend the result of this section to the unsupervised case. Using the same procedure as in \cite{ndaoud2018sharp} we claim the existence of a polynomial time method that is minimax optimal for clustering.\\

\section{ Interpolation vs Regularization for  sub-Gaussian mixtures}

In this section, we study the risk of the regularized OLS estimators. While it is more common to study SVM for classification, recent works \cite{ardeshir2021support,hsu2020proliferation} have shown that in high dimensions (specifically, $p=\Omega(n\log{n})$), SVM and OLS solutions coincide under mild conditions. This phenomenon is known in the literature as ``proliferation of support vectors". One of its implications is that in high dimensions, it is sufficient to study the properties of the least squares estimator and then demonstrate that it coincides with the hard-margin SVM. For the rest of this section, our goal is to study the risk of the family of supervised estimators defined as solutions to the problem
\[
\min_{\bar{\theta} \in \mb R^{p}} \frac{1}{n}\sum_{i=1}^n (\eta_{i} - \langle Y_i , \bar{\theta}\rangle)^2 + \lambda \|\bar{\theta}\|^2.
\]
Observe that the case $\lambda\to 0$ and $p \geq n$ leads to interpolation, specifically to the minimum $\ell_2$-norm interpolating solution. %(cf \cite{bartlett2020benign} for interpolation in regression and \cite{wang2020benign,cao2021risk} for interpolation in clustering of Gaussian mixtures, cf. also \cite{liang2021interpolating}). 
For each $\lambda > 0$, the corresponding estimator $\hat{\theta}_\lambda$ is proportional to
\[
\hat{\theta}_\lambda =  \frac{1}{n}\left(\lambda I_p + \frac{1}{n}YY^\top \right)^{-1}Y\eta =\frac{1}{n}Y \left(\lambda I_n + \frac{1}{n}Y^\top Y \right)^{-1}\eta .
\]
Each estimator $\hat{\theta}_\lambda$ leads to a linear classifier defined via
\[
\hat{\eta}_{\lambda}(\cdot) = \sign\left( \left\langle \hat{\theta}_\lambda,\cdot \right\rangle \right).
\]
In what follows, we will derive upper bounds for the risk $\R(\hat{\eta}_{\lambda})$. We will also provide sufficient conditions for the matrix $\Sigma$ ensuring that the interpolating classifier (corresponding to $\lambda = 0$) achieves at least the same performance as the oracle (or the averaging classifier that is minimax optimal). 

Notice that as $\lambda$ goes to $\infty$, we recover the averaging classifier. We emphasize here that if we replace the regularization term $\|\theta\|^2$ by $\|\theta\|_{\Sigma}^2$ (not adaptive to $\Sigma$), then our claims may no longer be true.

As a side note, we suspect that the excess risk of estimating $\theta$ itself gets smaller for large values of $\lambda$ although this is not necessarily the case for the classification risk. This suggests that the excess risk of estimation may not be the right metric to evaluate classification performance.

In order to state our main result, we will need the following quantity. For a given covariance matrix $\Sigma$, we define the function $k_{\Sigma}^{*}$ via
\[
k_{\Sigma}^{*}(\lambda) = \min \left\{k \geq 0, r_{k}(\Sigma) + \frac{n\lambda}{\lambda_{k+1}} \geq  C_1 n \right\},
\]
for some absolute constant $C_1 > 1$ (that can be chosen to be large enough) and $k_{\Sigma}^{*}(\lambda) := p+1$ if the above set if empty. 
The reader may observe that $k_{\Sigma}^*(\cdot)$ is decreasing with $\lambda$ and that $k_{\Sigma}^*(\lambda) = 0$ if $r(\Sigma) \geq C_1 n$. In what follows, we will require $k^*_\Sigma(\lambda)$ to be smaller than $n/2$. For $\lambda = 0$, this means that we are not allowing more than a fraction of all eigenvalues to be much larger than the remainder of the spectrum. Such conditions encompass many covariance matrices of interest, and in particular cover the case $\Sigma = \mathbf{I}_p + R$ where $R$ can be viewed a low rank perturbation/corruption component. In what follows, $\pi_{k^*}$ will stand for $\pi_{k^*}(\Sigma)$.
\begin{theorem}
\label{thm:interpolation1} 
Let $\Delta>0, \lambda \geq 0$. Assume that $k_{\Sigma}^{*}(\lambda) \leq n/2$ and that $\|\pi_{k^*}\theta\|^2 \leq \|\theta\|^2/5 $. Then for some constants $c,C>0$ we have with probability $1-\delta - e^{-cn}$ that 
\[
\R(\hat{\eta}_{\lambda})  \leq C\exp\left(-c\frac{\|\theta\|^4}{\theta^\top  \Sigma \theta(1+k^{*}) +  \frac{k^* \lambda^2_{k^*} + \sum_{i>k^*}\lambda^2_i(\Sigma)}{n}+\frac{(k^{*}\lambda_{k^*}^2+\lambda_{k^*+1}^2)\log(1/\delta)}{n}}\right),
\]
where $k^* = k_{\Sigma}^*(\lambda)$.
\end{theorem}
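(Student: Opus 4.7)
The plan is to bound $\R(\hat{\eta}_\lambda)$ via the standard sub-Gaussian tail bound on a fresh test observation: conditional on the training data $(\mathbf{Y},\eta)$, the random variable $\eta_{n+1}\langle\hat\theta_\lambda,Y_{n+1}\rangle = \langle\hat\theta_\lambda,\theta\rangle+\eta_{n+1}\langle\hat\theta_\lambda,W_{n+1}\rangle$ has mean $\langle\hat\theta_\lambda,\theta\rangle$ and sub-Gaussian proxy $\hat\theta_\lambda^\top\Sigma\hat\theta_\lambda$. Hence
\[
\R(\hat\eta_\lambda)\le \exp\!\pa{-\tfrac12\,\mathrm{SNR}},\qquad \mathrm{SNR}:=\frac{\langle\hat\theta_\lambda,\theta\rangle^2}{\hat\theta_\lambda^\top\Sigma\hat\theta_\lambda},
\]
so the entire task reduces to a high-probability lower bound on $\mathrm{SNR}$ matching the stated exponent.

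Next I would open up the linear algebra. Writing $Y=\theta\eta^\top+W$ and introducing the $n\times n$ matrix $A:=\lambda I_n+\tfrac1n W^\top W$ together with $\xi:=W^\top\theta\in\mb R^n$, the matrix $M:=\lambda I_n+\tfrac1n Y^\top Y$ admits the rank-$\le 2$ decomposition
\[
M = A + \tfrac{1}{n}\bigl(\|\theta\|^2\eta\eta^\top + \eta\xi^\top + \xi\eta^\top\bigr).
\]
The Sherman-Morrison-Woodbury identity then expresses $M^{-1}\eta$ as an explicit rational function of $A^{-1}\eta$, $A^{-1}\xi$, and the scalar quadratic forms $\eta^\top A^{-1}\eta$, $\eta^\top A^{-1}\xi$, $\xi^\top A^{-1}\xi$. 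Both $\langle\hat\theta_\lambda,\theta\rangle=\tfrac1n\eta^\top M^{-1}(\|\theta\|^2\eta+\xi)$ and $\hat\theta_\lambda^\top\Sigma\hat\theta_\lambda=\tfrac{1}{n^2}\eta^\top M^{-1}Y^\top\Sigma YM^{-1}\eta$ are thus reduced to controlling a short list of quadratic forms in $\eta$ and $\xi$ against $A^{-1}$ and $A^{-1}\Sigma A^{-1}$.

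The technical heart of the argument is the spike/bulk control of $A$. Decomposing $W=W_{(1)}+W_{(2)}$ along $\pi_{k^*}$ and its complement, the definition of $k^*_\Sigma(\lambda)$ combined with $k^*\le n/2$ feeds into standard sub-Gaussian Wishart concentration to yield, with probability at least $1-e^{-cn}$, a two-sided bulk bound
\[
c_1\mu^* I_n \preceq \lambda I_n + \tfrac1n W_{(2)}^\top W_{(2)} \preceq c_2\mu^* I_n,\qquad \mu^*:=\lambda+\tfrac1n\sum_{i>k^*}\lambda_i(\Sigma),
\]
while the spike contribution $\tfrac1n W_{(1)}^\top W_{(1)}$ is a rank-$k^*$ bump of operator norm $O(\lambda_{k^*}+k^*\lambda_{k^*}/n)$ which a second Woodbury application peels off cleanly inside $A^{-1}$. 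Hanson-Wright inequalities (exploiting independence of $\xi$ from $\eta$) then yield $\eta^\top A^{-1}\eta\asymp n/\mu^*$, $\xi^\top A^{-1}\xi = O(n\theta^\top\Sigma\theta/\mu^*)$, and $|\eta^\top A^{-1}\xi|$ of strictly smaller order, with the Hanson-Wright deviation supplying exactly the $\log(1/\delta)$ term in the final bound.

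Plugging these estimates back into the Woodbury expressions, the numerator is of order $\|\theta\|^2/\mu^*$ (this is where the condition $\|\pi_{k^*}\theta\|^2\le\|\theta\|^2/5$ is used, to prevent the signal from being absorbed by the spike directions in $M^{-1}$), while the denominator splits into a noise piece $\tfrac{1}{n(\mu^*)^2}\bigl(k^*\lambda_{k^*}^2+\sum_{i>k^*}\lambda_i^2\bigr)$ coming from the $W^\top\Sigma W$ part of $Y^\top\Sigma Y$, a signal piece $\theta^\top\Sigma\theta\,(1+k^*)/(\mu^*)^2$ where the $(1+k^*)$ is the residue of the spike directions of $\Sigma$ surviving $A^{-1}\Sigma A^{-1}$, and a deviation piece of order $(k^*\lambda_{k^*}^2+\lambda_{k^*+1}^2)\log(1/\delta)/(n(\mu^*)^2)$. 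Forming $\mathrm{SNR}$, the $\mu^*$ factors cancel and the stated exponent drops out. The main obstacle I anticipate is the Woodbury bookkeeping in the spike subspace: individual scalars like $\eta^\top A^{-1}\eta$ and $\xi^\top A^{-1}\xi$ are much larger than the combinations that ultimately survive in $\langle\hat\theta_\lambda,\theta\rangle$, so signs and cancellations must be tracked with care to avoid losing the $(1+k^*)$ constant. A secondary subtlety is sharpening the Hanson-Wright deviation for $\xi$: the naive $\|\Sigma\|_F^2$ bound is too coarse and has to be refined to $k^*\lambda_{k^*}^2+\sum_{i>k^*}\lambda_i^2$ by applying it separately on the spike and bulk blocks of $\Sigma$.
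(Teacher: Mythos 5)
Your plan follows essentially the same route as the paper's proof: the reduction to the signal-to-noise ratio via the conditional sub-Gaussian tail, the Woodbury/Sherman--Morrison decomposition of $\lambda I_n+\tfrac1nY^\top Y$ around $A_\lambda=\lambda I_n+\tfrac1nW^\top W$, the Bartlett-style spike/bulk two-sided eigenvalue control at level $k^*$, and block-wise Hanson--Wright to produce both the truncated trace term and the $\log(1/\delta)$ deviation. You also correctly locate where the alignment condition $\|\pi_{k^*}\theta\|^2\le\|\theta\|^2/5$ enters (the numerator) and where the $(1+k^*)$ factor arises, so the proposal is sound and matches the paper's argument.
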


The condition  $\|\pi_{k^*}\theta\|^2 \leq \|\theta\|^2/5 $ connecting $\theta$ with $\Sigma$ can be understood as follows. One may think of the $k^{*}$  eigenvectors corresponding to largest eigenvalues of $\Sigma$ as ``outliers'', or directions affecting dramatically the rest of the spectrum. Our condition prohibits $\theta$ from concentrating too much of its mass in the subspace spanned by the latter eigenvectors. For instance, if $\theta/\|\theta\|$ is a random vector that is well spread out on the sphere (spherically distributed) then $\|\pi_{k^*}\theta\|\approx \frac{k^{*}}{p}\|\theta\| \ll \|\theta\|$. The same remark holds also if we fix $\theta$ and think  of the range of $\pi_{k_\ast}$ as being a random subspace making $\pi_{k_\ast} \theta$ a random vector.
%as long as $\theta^\top \Sigma \theta /\|\theta\|^2 \leq \lambda_{k^{*}+1}$ (which holds, in particular, if $\theta$ is an eigenvector corresponding to any eigenvalue $\lambda_i$ as long as $i>k^*$) then $\theta^\top \Sigma \theta /\|\theta\|^2 \leq C( \sum_{i>k^*}\lambda_i/n + \lambda) $ holds.
Hence the latter condition means simply that the vector $\theta$ is only allowed to be aligned with the ``clean'' part of the covariance $\Sigma$. 

When $k^{*}=0$ (or equivalently $r(\Sigma) \geq C_1 n$), we recover the bound obtained in \cite{cao2021risk} by taking $\delta = e^{-cn}$. Notice that in this case the alignment condition $\|\pi_{k^*}\theta\|^2 \leq \|\theta\|^2/5 $  is always satisfied as $\pi_{k^*} = 0_p$. Our result is stronger since we show that the bound holds with probability $1-e^{-cn}$ while in \cite{cao2021risk} authors only prove that the same bound holds with probability $1-1/n$. Moreover, under the mild condition $r(\Sigma^2) \geq \log(n)$ and by taking $\delta=1/n$, we show that 
\[
\R(\hat{\eta}_{\lambda})  \leq C\exp\left(-c\frac{\|\theta\|^4}{\theta^\top  \Sigma \theta+  \frac{\Tr(\Sigma^2)}{n}}\right)
\]
with probability at least $1-1/n$. Therefore, taking the limit as $\lambda \to 0$ leads to the same bound as the averaging oracle in this case. 

In the case of moderate values of $k^*$, it is worth noticing  that $k^*(\cdot)$ is a decreasing function of $\lambda$. As a consequence, $k^* \lambda_{k*}^2 + \sum_{i>k^*} \lambda_{i}^{2}$ is an increasing function of $\lambda$ such that
\[
k^* \lambda_{k^*}^2 + \sum_{i>k^*} \lambda_{i}^{2} \leq \Tr{(\Sigma^2)}.
\]
The latter quantity could be seen as a truncated trace of $\Sigma^2$, where truncation is applied to the largest eigenvalues of $\Sigma$.
Unlike the previous works \cite{cao2021risk,wang2020benign}, our result is more general since we allow $k^*$ to be non-zero.  The bound in Theorem \ref{thm:interpolation1} in particular gets smaller as $\lambda$ goes to $0$, which suggests that interpolation may outperform regularization in some cases, especially in the case where a finite number of eigenvalues are much larger than the rest of the spectrum. 

In the case where $k^*$ can be large we get the following result, without any further assumptions on $\Sigma$. Let us define
\[
\mathcal{C}_{k^*}(\Sigma):= \{\theta \in \mathbb{R}^p,\quad \|\pi_{k^*}\theta\| \leq \|\theta\|/\sqrt{5}\}.
\]
\begin{prop}\label{cor:interpolation1} Let $\Delta>0, \lambda \geq 0$.  Assume that $k_{\Sigma}^{*}(\lambda) \leq n/2$. Then for some absolute constants $c,C>0$ 
\[
\underset{\substack{\|\theta\|^2 \geq \Delta^2 \|\Sigma\|_\infty \\ \theta \in \mathcal{C}_{k^*}(\Sigma)
 }}{\sup} \E(\R(\hat{\eta}_{\lambda}) ) \leq C\exp\left(-c\frac{\Delta^4}{\Delta^2 + \frac{r(\Sigma^2)}{n}}\right) + e^{-cn}.
\]
\end{prop}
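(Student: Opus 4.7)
The strategy is to apply Theorem~\ref{thm:interpolation1} pointwise in $\theta$, reduce the denominator of the exponent to the scale $\|\Sigma\|_\infty^2(\Delta^2 + r(\Sigma^2)/n)$ using the alignment condition together with the defining inequalities of $k^*$, and finally to pass from the resulting high-probability bound to an expectation bound by the crude argument $\R \leq 1$.

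First I would reduce to the boundary case $\|\theta\|^2 = \Delta^2\|\Sigma\|_\infty$: since $s \mapsto s^2/(as+b)$ is increasing in $s$ for $a,b \geq 0$, the exponent $\|\theta\|^4/D(\theta)$ in Theorem~\ref{thm:interpolation1} is increasing in $\|\theta\|^2$ for any fixed direction $\theta/\|\theta\|$, so the risk bound is largest at the smallest admissible norm. Decomposing $\theta = \pi_{k^*}\theta + (I-\pi_{k^*})\theta$ and using that $\pi_{k^*}\theta$ lies in the top $k^*$ eigenspace while $(I-\pi_{k^*})\theta$ lies in the complementary subspace (where all eigenvalues are at most $\lambda_{k^*+1}$), the alignment $\|\pi_{k^*}\theta\|^2 \leq \|\theta\|^2/5$ gives the pointwise estimate
\[
\theta^\top\Sigma\theta \leq \tfrac{1}{5}\|\Sigma\|_\infty\|\theta\|^2 + \lambda_{k^*+1}\|\theta\|^2.
\]

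Next I would invoke the defining property of $k^*$ to bound the spectral contributions in $D(\theta)$. When $\lambda = 0$, minimality of $k^*$ gives $r_{k^*}(\Sigma) \geq C_1 n$, whence $\lambda_{k^*+1} \leq \sum_{i>k^*}\lambda_i/(C_1 n)$ and the elementary identity $(1+k^*)\lambda_{k^*+1}^2 \leq \sum_{i\leq k^*+1}\lambda_i^2 \leq \Tr(\Sigma^2)$; when $\lambda > 0$, one splits into the subcases according to whether $r_{k^*}(\Sigma)$ or $n\lambda/\lambda_{k^*+1}$ is the dominant contribution to $C_1 n$ in the defining relation, and each is handled analogously. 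Combined with $(k^*\lambda_{k^*}^2 + \sum_{i>k^*}\lambda_i^2)/n \leq 2\Tr(\Sigma^2)/n$ (noted in the discussion preceding the proposition) and the choice $\delta = e^{-c_0 n}$ with $c_0$ small enough to absorb the log-confidence term $(k^*\lambda_{k^*}^2 + \lambda_{k^*+1}^2)\log(1/\delta)/n$, these inequalities let one show
\[
D(\theta) \leq C\bigl[\Delta^2\|\Sigma\|_\infty^2 + \Tr(\Sigma^2)/n\bigr].
\]
Dividing numerator and denominator of the exponent by $\|\Sigma\|_\infty^2$ then produces the desired form $\Delta^4/(\Delta^2 + r(\Sigma^2)/n)$; the conversion to expectation follows from $\R \leq 1$:
\[
\E[\R(\hat{\eta}_\lambda)] \leq C\exp\!\left(-c\frac{\Delta^4}{\Delta^2 + r(\Sigma^2)/n}\right) + \delta + e^{-cn},
\]
with $\delta + e^{-cn} \lesssim e^{-cn}$ by choice of $\delta$, and taking the supremum over admissible $\theta$ concludes.

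The principal obstacle is the $(1+k^*)$ prefactor multiplying $\theta^\top\Sigma\theta$ in Theorem~\ref{thm:interpolation1}: since $k^*$ may be as large as $n/2$, this is not a priori a constant. The key is to combine alignment, which forces the "outlier" contribution $\tfrac{1}{5}\|\Sigma\|_\infty\|\theta\|^2$ to $\theta^\top\Sigma\theta$ to be strictly smaller than $\|\theta\|^2\|\Sigma\|_\infty$, with the spectral identity $(1+k^*)\lambda_{k^*+1}^2 \leq \Tr(\Sigma^2)$, which controls the "clean" contribution $(1+k^*)\lambda_{k^*+1}\|\theta\|^2$ via an AM--GM-type step; correctly balancing these two pieces at the boundary $\|\theta\|^2 = \Delta^2\|\Sigma\|_\infty$ so that both are absorbed into $C[\Delta^2\|\Sigma\|_\infty^2 + \Tr(\Sigma^2)/n]$ is the technical heart of the proof.
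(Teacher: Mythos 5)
Your plan—deduce the proposition by applying Theorem~\ref{thm:interpolation1} pointwise and then massaging its denominator down to $C[\Delta^2\|\Sigma\|_\infty^2+\Tr(\Sigma^2)/n]$—breaks on the term $(1+k^*)\,\theta^\top\Sigma\theta$, and this is not a repairable technicality. Under the alignment condition you correctly get $\theta^\top\Sigma\theta\le \tfrac15\|\Sigma\|_\infty\|\theta\|^2+\lambda_{k^*+1}\|\theta\|^2$, but after multiplying by $(1+k^*)$ the surviving quantity is of order $k^*\lambda_{k^*+1}\|\theta\|^2$, which is \emph{not} dominated by $\Delta^2\|\Sigma\|_\infty^2+\Tr(\Sigma^2)/n$; neither the identity $(1+k^*)\lambda_{k^*+1}^2\le\Tr(\Sigma^2)$ nor any AM--GM balancing closes this gap, because the loss is multiplicative in $k^*$, not additive. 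Concretely, take $\lambda=0$, $k^*=n/2$ eigenvalues equal to $2$, the remaining $p-k^*=C_1 n$ eigenvalues equal to $1$, and $\theta$ supported entirely on the tail eigenspace (so the alignment condition holds trivially). Then the denominator of Theorem~\ref{thm:interpolation1} is at least $(1+k^*)\theta^\top\Sigma\theta\ge \tfrac n2\|\theta\|^2$, so the exponent it delivers is at most $\|\theta\|^4/(\tfrac n2\|\theta\|^2)=2\Delta^2\|\Sigma\|_\infty/n$, whereas the proposition requires an exponent of order $\Delta^4/(\Delta^2+r(\Sigma^2)/n)\asymp\Delta^2$ here (since $r(\Sigma^2)/n=O(1)$). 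Theorem~\ref{thm:interpolation1} is simply too weak, by a factor of $n$ in the exponent, to imply the proposition when $k^*\asymp n$.

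The paper's proof avoids this by \emph{not} routing through Theorem~\ref{thm:interpolation1} when $k^*>0$. It keeps the same numerator bound $|\langle\theta,\hat\theta_\lambda\rangle|\ge C\|\theta\|^2$, but re-derives the denominator from scratch: the minimality of $k^*$ forces $\sum_{i>k^*}\lambda_i/n+\lambda\le b\|\Sigma\|_\infty$, and the contraction facts $\|I_p-WA_\lambda^{-1}W^\top/n\|_\infty\le 1$ and $\|A_\lambda^{-1/2}W^\top\|_\infty^2/n\le 1$ give directly $\hat\theta_\lambda^\top\Sigma\hat\theta_\lambda\le C\|\theta\|^2\|\Sigma\|_\infty$ with probability $1-e^{-cn}$ — a crude bound, but one with no $(1+k^*)$ factor. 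This yields an exponent of order $\|\theta\|^2/\|\Sigma\|_\infty\ge\Delta^2\ge\Delta^4/(\Delta^2+r(\Sigma^2)/n)$, which suffices. If you want to salvage your write-up, you must replace the "apply Theorem~\ref{thm:interpolation1} and shrink its denominator" step with this separate denominator argument (or an equivalent one); the rest of your outline — the reduction to the boundary $\|\theta\|^2=\Delta^2\|\Sigma\|_\infty$, the choice $\delta=e^{-c_0n}$, and the passage to expectation via $\R\le1$ — is fine and consistent with the paper.
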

This result suggests that under a mild condition on the covariance of the noise, not only interpolation is benign but it is also minimax optimal on the set $\mathcal{C}_{k^*}(\Sigma)$. This also means that interpolation is better for classification than for regression since it does not suffer from a bias term which often leads to bad worst-case performance (\cite{bartlett2020benign}).

Recall that all our results hold under the condition $k_{\Sigma}^{*}(\lambda) \leq n/2$. We may wonder here what happens if  $k^{*}$ is much larger than $n$, and numerical experiments suggest that interpolation indeed behaves poorly in this case.

\section{Proliferation of support vectors in high dimensions under the sub-Gaussian mixture model}

In this section, we provide sufficient conditions for proliferation of support vectors. Based on the results in  \cite{hsu2020proliferation}, $\hat{\theta}_{\text{SVM}}$ and $\hat{\theta}_{\text{OLS}}$, as defined in \eqref{eq:SVM}-\eqref{eq:OLS}, coincide  if and only if 
\[
\forall i=1,\dots,n \quad \eta_i e_i^\top(Y^{\top}Y)^{-1}\eta  >0,
\]
where $(e_i)_{i=1,\dots,n}$ is the Euclidean canonical basis.  In the remainder of the section we denote $k^{*}:= k^{*}_{\Sigma}(0)$. The main result is stated next.
\begin{theorem}
\label{thm:prolif}
Assume that $k^{*}\log^2(n) \leq C n$, $\sum_{i>k^*} \lambda_{i}^{2}n\log(n) \leq C (\sum_{i>k^*} \lambda_{i})^2$ and $ \sqrt{\theta^\top \Sigma \theta (1+k^*)\log(n)} \leq C \sum_{i>k^*}\lambda_i/n$ for some absolute constant $C>0$. Then with probability at least $1-1/n$,
\[
\hat{\theta}_{SVM} = \hat{\theta}_{\text{OLS}}.
\]
\end{theorem}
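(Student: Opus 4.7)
The plan is to invoke the characterization of \cite{hsu2020proliferation} recalled just before the statement: $\hat\theta_{\text{SVM}} = \hat\theta_{\text{OLS}}$ if and only if $\eta_i\, e_i^\top(Y^\top Y)^{-1}\eta > 0$ for every $i\in\{1,\dots,n\}$. Writing $v := W^\top\theta$ and $G := W^\top W$, we have $Y^\top Y = G + \|\theta\|^2\eta\eta^\top + \eta v^\top + v\eta^\top$, and two applications of the Sherman--Morrison formula give the closed form
\[
(Y^\top Y)^{-1}\eta \;=\; \frac{(1+b)\,G^{-1}\eta \;-\; a\,G^{-1}v}{D},
\]
with $a := \eta^\top G^{-1}\eta$, $b := \eta^\top G^{-1}v$, $c := v^\top G^{-1}v$ and $D := 1 + a\|\theta\|^2 + 2b + b^2 - ac$. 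Since $Y^\top Y$ is positive definite on the good event, $\eta^\top(Y^\top Y)^{-1}\eta = a/D > 0$ automatically forces $D > 0$, so the SVP sign condition reduces to showing, uniformly in $i$, that $\eta_i\bigl[(1+b)(G^{-1}\eta)_i - a(G^{-1}v)_i\bigr] > 0$.

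To control $G^{-1}$ I would decompose $W = W_1 + W_2$ orthogonally along the top-$k^*$ eigenspace of $\Sigma$, so that $W_1^\top W_2 = 0$ and $G = G_1 + G_2$ with $G_1 = M_1 M_1^\top$ of rank at most $k^*$, and apply Woodbury:
\[
G^{-1} \;=\; G_2^{-1} - G_2^{-1} M_1\bigl(I_{k^*} + M_1^\top G_2^{-1} M_1\bigr)^{-1} M_1^\top G_2^{-1}.
\]
The definition of $k^*$ forces $r_{k^*}(\Sigma) \geq C_1 n$; combined with the hypothesis $n\log(n)\sum_{i>k^*}\lambda_i^2 \leq C\rho^2$, where $\rho := \sum_{i>k^*}\lambda_i$, standard sub-Gaussian Gram concentration (as in \cite{bartlett2020benign}) yields $\|G_2 - \rho I_n\|_\infty = o(\rho)$ on an event of probability at least $1 - 1/(3n)$, and analogous concentration gives $\|M_1^\top M_1 - n\Lambda_1\|_\infty = o(n\lambda_{k^*})$. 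Substituting back, the Woodbury correction to $G_2^{-1}\eta$ reduces, up to lower order, to the vector $(w^\top w\,\eta)/(n\rho)$, where $w\in\mathbb{R}^{k^*\times n}$ collects the standardized top-$k^*$ noise coordinates. A coordinatewise sub-Gaussian bound gives $\max_{1\leq i\leq n}\bigl|\bigl((w^\top w - k^* I_n)\eta\bigr)_i\bigr| \leq C\sqrt{nk^*\log n}$ with probability at least $1-1/(3n)$, which is $o(n)$ under $k^*\log^2 n \leq Cn$. Hence $(G^{-1}\eta)_i = \bigl(1 - k^*/n + o(1)\bigr)\eta_i/\rho$ coordinatewise, so $\eta_i(G^{-1}\eta)_i \geq (2\rho)^{-1}$ uniformly in $i$ on the good event.

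It remains to show that the cross term $a\,(G^{-1}v)_i$ is dominated. Sub-Gaussian concentration of $v$ yields $\max_{1\leq i\leq n}|v_i| \leq C\sqrt{\theta^\top\Sigma\theta\log n}$ uniformly; together with $a$ being of order $n/\rho$, the third hypothesis $\sqrt{\theta^\top\Sigma\theta(1+k^*)\log n} \leq C\rho/n$ gives $|a(G^{-1}v)_i| \leq C'\,n\sqrt{\theta^\top\Sigma\theta\log n}/\rho^2 = o(\rho^{-1})$, and also forces $|b|\ll 1$. The additional $(1+k^*)$ factor in the assumption is exactly what is needed to absorb the Woodbury correction of $G^{-1}v$ that couples the signal to the top-$k^*$ noise subspace through $M_1^\top v$, where a coordinatewise union bound contributes the remaining logarithmic factor. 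A union bound over $i\in\{1,\dots,n\}$, combined with the concentration of $G_2$ and $M_1$, yields the sign condition on an event of probability at least $1 - 1/n$, and the theorem follows from Hsu's criterion. The main obstacle I foresee is precisely this uniform coordinatewise control of the Woodbury correction: because the SVP condition is pointwise rather than operator-wise, the analysis must pay a logarithmic overhead from the union bound, which is the precise source of the $\log n$ and $\log^2 n$ factors in the three hypotheses.
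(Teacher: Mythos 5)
Your reduction of the SVP condition via Sherman--Morrison to the sign of $\eta_i\bigl[(1+b)(G^{-1}\eta)_i - a(G^{-1}v)_i\bigr]$ matches the paper's first step. The gap is in how you then control individual coordinates of $G^{-1}\eta$ and $G^{-1}v$. You split $G=G_1+G_2$ along the top-$k^*$ eigenspace and argue that operator-norm concentration, $\|G_2-\rho I_n\|_\infty=o(\rho)$ with $\rho=\sum_{i>k^*}\lambda_i$, yields $(G^{-1}\eta)_i=(1-k^*/n+o(1))\eta_i/\rho$ coordinatewise. This does not follow. Writing $G_2=\rho I_n+E$ with $\|E\|_\infty\leq\eps\rho$, all that operator-norm information gives for a single coordinate is
\[
\bigl|(G_2^{-1}\eta)_i-\eta_i/\rho\bigr| \;=\; \rho^{-1}\bigl|e_i^\top G_2^{-1}E\eta\bigr| \;\leq\; \rho^{-1}\,\|G_2^{-1}\|_\infty\,\|E\|_\infty\,\|\eta\| \;\lesssim\; \eps\sqrt{n}/\rho,
\]
which exceeds the main term $1/\rho$ by a factor $\eps\sqrt{n}$; under your hypotheses $\eps\asymp 1/\sqrt{\log n}$, so this bound is useless, and even the first-order entrywise correction $\rho^{-2}(E\eta)_i$ is generically of the same order as $\rho^{-1}\eta_i$, so a Neumann expansion does not close the argument either. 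The same defect appears in your bound $|a(G^{-1}v)_i|\lesssim n\sqrt{\theta^\top\Sigma\theta\log n}/\rho^2$, which implicitly replaces $(G^{-1}v)_i$ by $v_i/\rho$. You correctly identify that the pointwise nature of the SVP condition is the crux, but you locate the difficulty in the Woodbury correction from the top-$k^*$ block, whereas the genuinely hard term is the bulk $G_2^{-1}$ acting coordinatewise.

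The paper circumvents exactly this issue with a leave-one-out identity (Lemma~\ref{lem:inverse_canonical}, a Schur complement on the $i$-th column of $W$): $e_i^\top(W^\top W)^{-1}\omega=\bigl(\omega_i-W_i^\top\tilde W(\tilde W^\top\tilde W)^{-1}\tilde\omega\bigr)/\bigl(\|W_i\|^2-W_i^\top\pi W_i\bigr)$ with positive denominator, so the sign condition becomes a statement about $W_i^\top\tilde W(\tilde W^\top\tilde W)^{-1}\tilde\eta$ and $W_i^\top(I_p-\pi)\theta$. Both are sub-Gaussian conditionally on $\tilde W$, with parameters $\|\Sigma^{1/2}\tilde W(\tilde W^\top\tilde W)^{-1}\tilde\eta\|$ and $\|\Sigma^{1/2}(I_p-\pi)\theta\|$, which are then controlled by Lemmas~\ref{lem:control_3}--\ref{lem:control_4} and the bounds from the proof of Theorem~\ref{thm:interpolation1}; the union bound over $i$ is what produces the $\log n$ and $\log^2 n$ factors in the hypotheses. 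To repair your argument you would need to replace the operator-norm step by such a per-coordinate decoupling that isolates $W_i$ from the remaining columns; your spectral split of $W$ can be retained but is not the source of the difficulty. As written, the central claim $\eta_i(G^{-1}\eta)_i\geq(2\rho)^{-1}$ uniformly in $i$ is unsupported.
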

As a consequence, $\hat{\eta}_{\text{SVM}}$ attains the same performance as $\hat{\eta}_0$ under the conditions of Theorem \ref{thm:prolif}.

When $k^{*} = 0$ (i.e. $r(\Sigma) = \Omega(n)$), the sufficient conditions read as
\begin{itemize}
    \item $ \sqrt{\theta^\top \Sigma \theta \log(n)} \leq C\Tr(\Sigma)/n$;
\item $\Tr(\Sigma^2) n\log(n) \leq C(\Tr(\Sigma))^2$.
\end{itemize}
 The first condition (that is signal-dependent) is also required in prior works \cite{cao2021risk,wang2020benign}. As for the ``dimension-dependent'' second condition, it is much milder than the one proposed in both previous papers on the topic. To compare these results, consider the case $\Sigma = \mathbf{I}_p$, where our condition reads as $p = \Omega(n\log(n))$ while the earlier analogues require $p = \Omega(n^{3/2}\log(n))$. Our result also suggests that $r(\Sigma) = \Omega(n\log(n))$ suffices for proliferation under the sub-Gaussian mixture model, which confirms the general conjecture stated in \cite{hsu2020proliferation}.

\section{Application to robust supervised clustering}
\label{sec:robust}
In this section, we present the framework for robust clustering in the sub-Gaussian mixture model. In the rest of this section we consider the case of Gaussian noise and identity covariance $\Sigma = \mathbf{I}_p$.  We will assume that the training set has a different covariance than the covariance of the new observation to be classified, due to the action of a malicious adversary.  More precisely, given the vector of observations $Y$, the adversary can corrupt the sample as follows: she chooses up to $r \leq n/4$ eigenvalues of the covariance matrix $\Sigma$ and positive scalars $O_1,\dots,O_r$, and then adds i.i.d. random noise to $Y$ such that the new observations become
\[
\tilde{Y}_i = Y_i +O^{1/2}\epsilon_{i}  = \eta_i\theta  + W_i + O^{1/2} \epsilon_{i}, \quad  \forall i=1,\dots,n, 
 \]
 where $\epsilon_{i}$ are i.i.d. standard normal vectors  that are also independent from $Y$, and $O = \sum_{i \in R} O_ie_ie_i^{\top}$ where $(e_1,\dots,e_p)$ is the canonical Euclidean basis of $\mathbb{R}^p$ and  $R$ is the set of indices corresponding to the corrupted eigenvalues. Observe that the covariance of the noise is now given by $\mathbf{I}_p + O$.  In what follows,  $\pi_{r}$ denotes the projection $\pi_{r}( O):= \sum_{i \in R}e_ie_i^\top$. Our goal is to show that under minimal assumptions, the interpolating estimator is still minimax optimal while both the averaging estimator and the $\text{LDA}$ one fail to perform well.

\begin{theorem}\label{thm:robust}
Assume that $r  \leq n/4$ and that $\Delta^2 \geq  p/n$. Then
\[
\underset{\substack{\|\theta\| \geq \Delta  \\ \theta \in \mathcal{C}_{r}(\mathbf{I}_p + O)
 }}{\sup} \E(\R(\hat{\eta}_{0}) ) \leq C\exp\left(-c\frac{\Delta^4}{\Delta^2 + \frac{p}{n}}\right) + e^{-cn}.
\]
% and $r \Tr(\Sigma)^2 \leq \Tr(\Sigma^2)n^2$.

%\begin{itemize}
%\item 
%If $\|\pi_{r}\theta\|^2 \leq \|\theta\|^2/5 $, then with probability $1-\frac{1}{n}$ we have
%\[
%\R_{\Sigma}(\hat{\eta}_{0})  \leq C\exp\left(-c\frac{\|\theta\|^4}{\|\theta\|^2(1+r)+  \frac{p}{n}}\right).
%\]
%If moreover conditions of Theorem \ref{thm:prolif} hold,  then with probability $1-\frac{1}{n}$ we get
%\[
%\R_{\Sigma}(\hat{\eta}_{\text{SVM}})  \leq C\exp\left(-c\frac{\|\theta\|^4}{\theta^\top \Sigma \theta(1+r + k^{*})+  \frac{\Tr(\Sigma^2)}{n}}\right).
%\]
%\item From a minimax perspective, we get moreover that
%\[
%\underset{\substack{\|\theta\| \geq \Delta  \\ \theta \in \mathcal{C}_{r}(O)
% }}{\sup} \E(\R(\hat{\eta}_{0}) ) \leq C\exp\left(-c\frac{\Delta^4}{\Delta^2 + \frac{p}{n}}\right) + e^{-cn}.
%\]
%for some $c,C>0$.
%\end{itemize}
\end{theorem}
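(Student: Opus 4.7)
The plan is to apply Theorem \ref{thm:interpolation1} with the effective training covariance $\Sigma' := \mathbf{I}_p + O$, while exploiting the fact that the test point $Y_{n+1}$ has the clean isotropic covariance $\mathbf{I}_p$ rather than $\Sigma'$. The spectrum of $\Sigma'$ consists of at most $r$ spike eigenvalues $(1 + O_i)_{i \in R}$ above $p - r$ unit bulk eigenvalues, so taking $k = r$ in the definition of $k^*_{\Sigma'}(0)$ yields $r_r(\Sigma') = p - r \geq C_1 n$ in the overparametrized regime implicit in $\Delta^2 \geq p/n$; hence $k^*_{\Sigma'}(0) \leq r \leq n/4 \leq n/2$. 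Since $\pi_{k^*}$ sits inside $\pi_r$ (the top-$r$ projection of $\Sigma'$, which coincides with the projection onto the corrupted coordinates $R$), the hypothesis $\theta \in \mathcal{C}_r(\mathbf{I}_p + O)$ immediately gives $\theta \in \mathcal{C}_{k^*}(\Sigma')$, so the structural requirements of Theorem \ref{thm:interpolation1} hold.

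A direct application of Theorem \ref{thm:interpolation1} would, however, introduce the potentially huge factor $\|\Sigma'\|_\infty = 1 + \max_i O_i$ in the exponent; the robustness phenomenon we need is captured by observing that the conditional risk equals $\Phi(-|\hat{\theta}_0^\top \theta|/\|\hat{\theta}_0\|)$ (clean-test variance), so the relevant quantity is $\|\hat{\theta}_0\|^2 = \eta^\top (\tilde{Y}^\top \tilde{Y})^{-1}\eta$ rather than $\hat{\theta}_0^\top \Sigma' \hat{\theta}_0$. Since the added corruption $O^{1/2}\epsilon_i$ lives entirely in $\mathrm{range}(\pi_r)$, we have $(\mathbf{I}_p - \pi_r)\tilde{Y}_i = (\mathbf{I}_p - \pi_r) Y_i$ and thus $\tilde{Y}^\top \tilde{Y} = Y^\top (\mathbf{I}_p - \pi_r) Y + \tilde{Y}^\top \pi_r \tilde{Y}$. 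Operator monotonicity then yields $\|\hat{\theta}_0\|^2 \leq \eta^\top (Y^\top (\mathbf{I}_p - \pi_r) Y)^{-1}\eta$, which is the squared norm of the clean isotropic interpolator on the $(p-r)$-dimensional subspace $\mathrm{range}(\mathbf{I}_p - \pi_r)$. Standard concentration of its Gram matrix around $(p-r)\mathbf{I}_n + \|(\mathbf{I}_p - \pi_r)\theta\|^2 \eta\eta^\top$, together with the alignment bound $\|(\mathbf{I}_p - \pi_r)\theta\|^2 \geq 4\|\theta\|^2/5$, gives $\|\hat{\theta}_0\|^2 \lesssim n/((p-r) + n\|\theta\|^2)$ with probability $1 - e^{-cn}$.

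The signal $\hat{\theta}_0^\top \theta = \theta^\top \tilde{Y}(\tilde{Y}^\top \tilde{Y})^{-1}\eta$ is the crux. Let $V \subset \mathbb{R}^n$ denote the range of $\tilde{Y}^\top \pi_r$ (of dimension at most $r$), and decompose $\eta = \eta_\parallel + \eta_\perp$ with $\eta_\parallel$ the orthogonal projection onto $V$; then $\|\eta_\perp\|^2 \geq n - r \geq 3n/4$. A Woodbury expansion of $(\tilde{Y}^\top \tilde{Y})^{-1}\eta$ shows its components along $V$ are damped as $\max_i O_i$ grows relative to those along $\eta_\perp/((p-r) + n\|\theta\|^2)$, and the corruption noise $\theta^\top O^{1/2}\epsilon$ entering $\theta^\top \tilde{Y}$ lies entirely in $V$, so it is annihilated by the inner product with $\eta_\perp$. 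Combined with the alignment $\|\pi_r \theta\|^2 \leq \|\theta\|^2/5$, this yields $\hat{\theta}_0^\top \theta \gtrsim \|\theta\|^2 \|\hat{\theta}_0\|^2$ on the same high-probability event, whence signal-to-noise ratio $\gtrsim \|\theta\|^2/\sqrt{(p-r)/n + \|\theta\|^2}$.

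Plugging into the Gaussian tail bound $\Phi(-x) \leq e^{-x^2/2}$ gives $\mathcal{R}(\hat{\eta}_0) \leq C \exp(-c\|\theta\|^4/(\|\theta\|^2 + (p-r)/n))$; using $r \leq n/4$ so that $(p-r)/n \leq p/n$, and $\|\theta\| \geq \Delta$, this simplifies to $C \exp(-c\Delta^4/(\Delta^2 + p/n))$, and adding the $e^{-cn}$ term for the failure event of the concentration estimates before taking the supremum over the aligned set $\mathcal{C}_r(\mathbf{I}_p + O)$ yields the claim. The main technical obstacle is the signal lower bound uniformly in the adversarial choice of $O$: one must show via the Woodbury identities that the spike contribution to $\hat{\theta}_0$ is dwarfed by the bulk regardless of how large the $O_i$ are, which is the finite-$O$ analogue of the limit $\min_i O_i \to \infty$ in which $\hat{\theta}_0$ collapses onto the purely clean-data estimator $(\mathbf{I}_p - \pi_r) Y \eta_\perp/[(p-r) + n\|\theta\|^2]$.
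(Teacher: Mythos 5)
Your overall strategy matches the paper's: both exploit the fact that the test point is clean, so the conditional risk is governed by $\langle\theta,\hat{\theta}_0\rangle^2/\|\hat{\theta}_0\|^2$ with the Euclidean norm in the denominator rather than $\hat{\theta}_0^\top(\mathbf{I}_p+O)\hat{\theta}_0$, and both observe that $\mathbf{I}_p+O$ has at most $r\leq n/4$ spike eigenvalues so that the $k^*\leq n/2$ and alignment hypotheses of Theorem~\ref{thm:interpolation1} transfer. Your treatment of the denominator (dropping the corrupted coordinates via $\tilde{Y}^\top\tilde{Y}\succeq Y^\top(\mathbf{I}_p-\pi_r)Y$ and operator antimonotonicity) is a legitimate and arguably cleaner alternative to the paper's route, which instead bounds $\eta^\top A_0^{-1}W^\top W A_0^{-1}\eta$ via Lemma~\ref{lem:subG} and the spectral estimates on $A_0$.

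The gap is in the numerator, which is the heart of the robustness claim. First, the step $\|\eta_\perp\|^2\geq n-r$ is false as stated: $V=\mathrm{range}(\tilde{Y}^\top\pi_r)$ is not a coordinate subspace of $\mb{R}^n$ but the span of the $r$ corrupted rows of $\tilde{Y}$, each of which contains the deterministic term $\theta_j\eta$; when some $\theta_j^2$ is large relative to $1+O_j$ (which the constraint $\|\pi_r\theta\|^2\leq\|\theta\|^2/5$ does not preclude for an individual coordinate), the projection of $\eta$ onto $V$ captures nearly all of $\|\eta\|^2=n$ and $\eta_\perp$ can be far smaller than $\sqrt{3n/4}$. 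Second, the claim that the Woodbury expansion damps the spike contribution uniformly in the adversarial $O_i$, yielding $\hat{\theta}_0^\top\theta\gtrsim\|\theta\|^2\|\hat{\theta}_0\|^2$, is asserted rather than derived, and you yourself flag it as the main unresolved obstacle. The paper closes exactly this gap by a different decomposition: it writes $\hat{\theta}_0$ through the exact identity of Lemma~\ref{lem:3} in terms of $A_0=W^\top W/n$, uses that $\mathbf{I}_p-WA_0^{-1}W^\top/n$ is PSD with spectral norm at most one together with the split $\theta=\pi_r\theta+(\mathbf{I}_p-\pi_r)\theta$, and invokes Lemmas~\ref{lem:control_1} and~\ref{lem:control_2} with $k^*$ replaced by $r$; the magnitude of $O$ then enters only through $\lambda_n(A_0)$ restricted to the bulk, which is controlled by $(p-r)/n$ no matter how large the $O_i$ are. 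As it stands, your proposal establishes the variance control but not the signal lower bound, so it does not yet constitute a proof.
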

One implication of this theorem is the fact that $\hat{\eta}_0$ is minimax optimal on the set $\mathcal{C}_{r}(\mathbf{I}_p + O)$  and robust with respect to the corruption $O$, for moderate values of $p$.  When $r \leq n/4$ and the direction of $\theta$ is not too closely aligned with the eigenvectors corresponding to the corrupted part of the spectrum, then $\hat{\eta}_0$ mimicks the performance of the averaging estimator in the absence of outliers. %Moreover  $\hat{\eta}_{\text{SVM}}$ achieves the same rate as long as $\|\theta\| = \mathcal{O}( p/ (n\sqrt{\log(n)})$ and  $n\log(n) = \mathcal{O}(p)$.
In order to compare the bound stated above to estimates available for other estimators, we rely on the next proposition.
\begin{prop}\label{prop:compare}
Assume that the noise is Gaussian and that $O$ satisfies $O = n \sum_{i=1}^r e_ie_i^\top$ where  $r=n/4$. Then for any $\theta$ such that $\|\theta\|^2 \leq \sqrt{n}$, 
\[
\E(\R_{\Sigma}(\hat{\eta}_{\text{LDA}})) \wedge \E(\R_{\Sigma}(\hat{\eta}_{ave}) ) \geq C
\]
for some absolute constant $C>0$.
\end{prop}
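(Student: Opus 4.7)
The plan is to first observe that since $\Sigma = \mathbf{I}_p$, the LDA formula $\hat{\eta}_{\text{LDA}}(y)=\sign(\langle \Sigma^{-1}\sum_i \tilde{Y}_i\eta_i,y\rangle)$ reduces exactly to the averaging classifier $\hat{\eta}_{\text{ave}}$, so the two expectations in the statement coincide. It then suffices to show that $\mathbf{E}(\R_{\Sigma}(\hat{\eta}_{\text{ave}}))\geq C$.

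I would decompose $\hat{\theta}_{\text{ave}}=\sum_{i=1}^n \tilde{Y}_i\eta_i = n\theta + U$, where $U:=\sum_i \eta_i W_i + \sum_i \eta_i O^{1/2}\epsilon_i$. By sign-flip symmetry of the Gaussian noise, $U\sim N(0,n(\mathbf{I}_p+O))$; in particular $\pi_R U\sim N(0,n(1+n)\pi_R)$. Conditioning on the training sample and using that $W_{n+1}\sim N(0,\mathbf{I}_p)$ is independent, on the event $\{\eta_{n+1}=+1\}$ the conditional misclassification probability is exactly
\[
\Phi\!\left(-\,\frac{n\|\theta\|^2+\langle U,\theta\rangle}{\|n\theta+U\|}\right).
\]
The plan is now to show that the argument of $\Phi$ is bounded above (in absolute value) by an absolute constant on an event of probability $1-e^{-cn}$; by monotonicity of $\Phi$ this immediately yields a positive constant lower bound on the expected risk.

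For the numerator, the deterministic part satisfies $n\|\theta\|^2\leq n^{3/2}$ under the assumption $\|\theta\|^2\leq\sqrt{n}$, while $\langle U,\theta\rangle\sim N(0,\,n\|\theta\|^2+n^2\|\pi_R\theta\|^2)$ has variance at most $n^{3/2}+n^{5/2}\leq 2n^{5/2}$, so Gaussian tails give $|\langle U,\theta\rangle|\leq C n^{5/4}$ with probability at least $0.95$. Hence the numerator is at most $2n^{3/2}$ on that event. For the denominator, I would use
\[
\|n\theta+U\|\geq \|\pi_R(n\theta+U)\|\geq \|\pi_R U\|-n\|\pi_R\theta\|.
\]
Since $\|\pi_R U\|^2/(n(1+n))\sim \chi^2_r$ with $r=n/4$, standard $\chi^2$ concentration yields $\|\pi_R U\|^2\geq n(1+n)r/2\geq n^3/8$ with probability $1-e^{-cn}$, whereas $n\|\pi_R\theta\|\leq n\|\theta\|\leq n^{5/4}$; combining, $\|n\theta+U\|\geq n^{3/2}/4$ for $n$ large enough.

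Putting the bounds together, the ratio in the argument of $\Phi$ is at most an absolute constant on an event of probability $1-e^{-cn}$, which gives $\mathbf{E}(\R_{\Sigma}(\hat{\eta}_{\text{ave}}))\geq C$. The main (modest) obstacle is the bookkeeping of the two regimes for $\theta$: the numerator bound degrades when $\|\pi_R\theta\|$ grows (because $\mathrm{Var}\langle U,\theta\rangle$ grows), while the denominator is correspondingly helped by the extra deterministic mass $n\pi_R\theta$; the projection inequality above handles both regimes uniformly and avoids a case split. The underlying intuition is simply that the adversarial corruption contributes $\Omega(n^{3/2})$ to the noise standard deviation, whereas the signal contribution $n\|\theta\|^2$ is capped at $n^{3/2}$, so the SNR cannot exceed an absolute constant.
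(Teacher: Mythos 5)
Your proof is correct and follows essentially the same route as the paper: both reduce to showing that the signal-to-noise ratio $\langle\theta,\hat\theta\rangle/\|\hat\theta\|$ of the averaging direction is bounded by an absolute constant with constant probability, because the corruption inflates the noise norm to order $\sqrt{n}\cdot\sqrt{n/4}$ while $n\|\theta\|^2\leq n^{3/2}$ caps the numerator. The only cosmetic differences are that you use the exact $\Phi$ formula and lower-bound the denominator via the projection $\pi_R$, whereas the paper uses a generic Gaussian tail lower bound and the full norm $\|\xi\|^2$ together with a bound on $|\theta^\top\xi|$; both yield the same conclusion.
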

In summary, when $r  \leq n/4$, there exists a regime (where $p/n\leq\|\theta\|^2 \leq \sqrt{n}$) such that interpolation $\hat{\eta}_{0}$ is minimax optimal over a large class of vectors $\theta$, under the contamination model we presented, while both $\hat{\eta}_{ave}$ and $\hat{\eta}_{\text{LDA}}$ can fail with constant non-zero probability.

\begin{comment}
\section{Interpolation is all you need}
We now focus on the case $\lambda = 0$. Previous we have that \[
\hat{\theta}=\left(\frac{YY^\top }{n}\right)^{-1}\frac{Y\eta}{n}
\] 

\begin{theorem}\label{thm:interpolation2} Let $\Delta>0$. Assume that $k_{\Sigma}^{*}(0) \leq n/2$ and $\theta^\top \Sigma \theta \leq \lambda_{k_{\Sigma}^*(0)} \|\theta\|^2$. Then for some constants $c,C>0$ we have that 
\[
R(\hat{\eta}_{0})  \leq C\exp\left(-c\frac{(\theta^\top  \Sigma^{-1}\theta)^2}{\frac{p}{n}}\wedge \frac{\|\theta\|^4}{\theta^\top \Sigma \theta}\wedge \frac{p}{n}\right).
\]
\end{theorem}
The result implies that there exist regimes where interpolation mimicks the LDA estimators, while in the other regimes it mimicks the mean estimator based on whether the center vector $\theta$ is aligned with the top eigenvectors of $\Sigma$ or not. As a consequence we derive the following corollary that holds under a mild assumption on the condition number of $\Sigma$.
\begin{corollary}\label{cor:interpolation2}
Assume that $\kappa(\Sigma) \leq \sqrt{p/n}$. Then interpolation achieves consistent clustering $(R(\hat{\eta}) \to 0)$ if
\[
\| \theta \|^2 \gg \sqrt{\frac{\Tr(\Sigma^2)}{n}} \text{ or } \theta^\top \Sigma^{-1} \theta  \gg \sqrt{\frac{p}{n}}.
\]
\end{corollary}

Even interpolation is not fully optimal, it takes the best of the empirical
mean oracle and the LDA estimator without being explicit about which method to
use when \(p\gg n\).
\end{comment}
\section{Numerical experiments} 
The goal of this section is to compare the performance of several estimators $\hat{\eta}_{\lambda}$ for different values of $\lambda$. The case $\lambda = 0$ corresponds to interpolation, while $\lambda = \infty$ recovers the averaging classifier $\hat{\eta}_{\text{ave}}$. In all our simulations we will only consider Gaussian noise and $\theta/\|\theta\|$ spherically distributed. 

\begin{figure}[ht!]
\minipage{0.5\textwidth}
  \includegraphics[width=\linewidth]{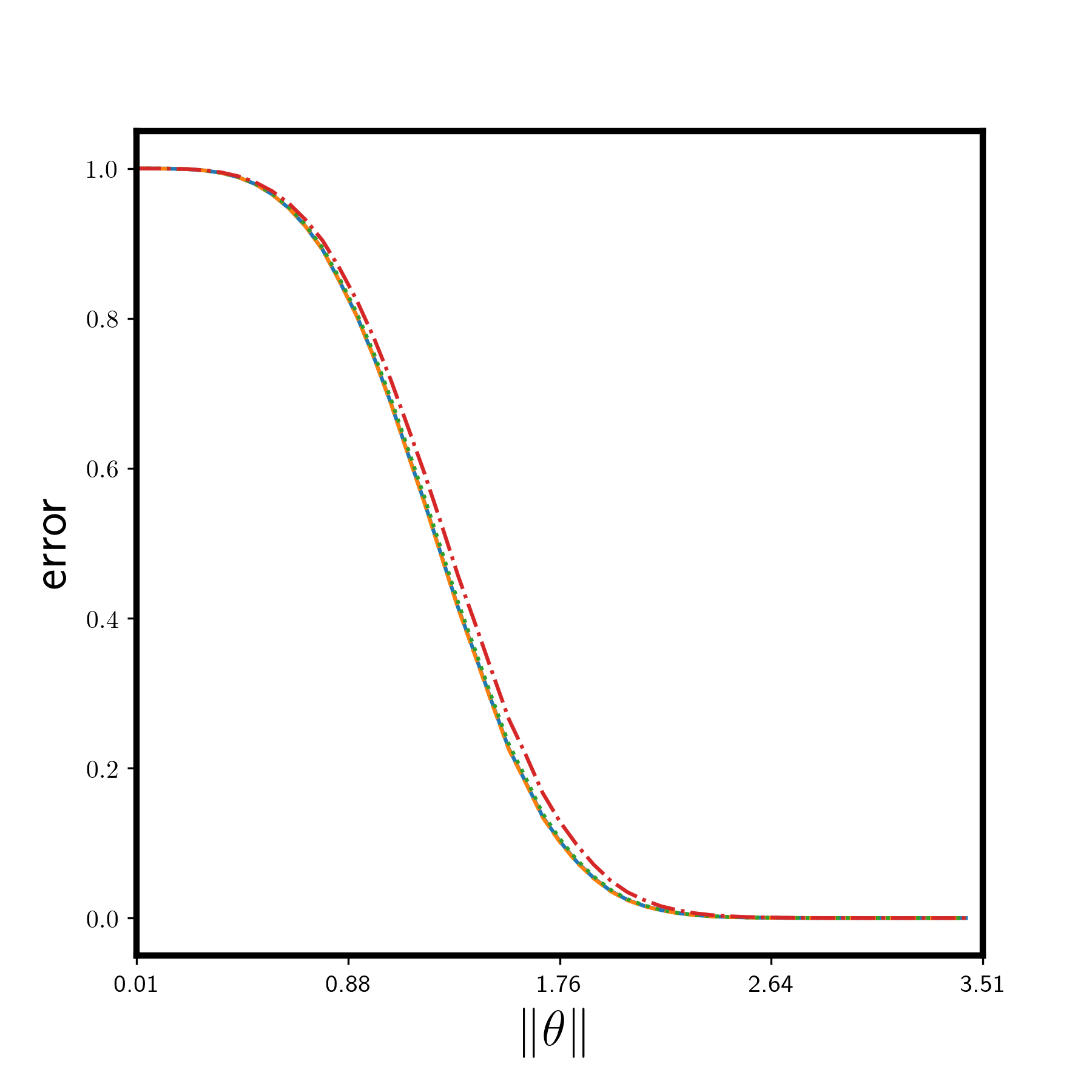}
  %\caption{$k^* = 0$}\label{fig:awesome_image1}
\endminipage\hfill
\minipage{0.5\textwidth}
  \includegraphics[width=\linewidth]{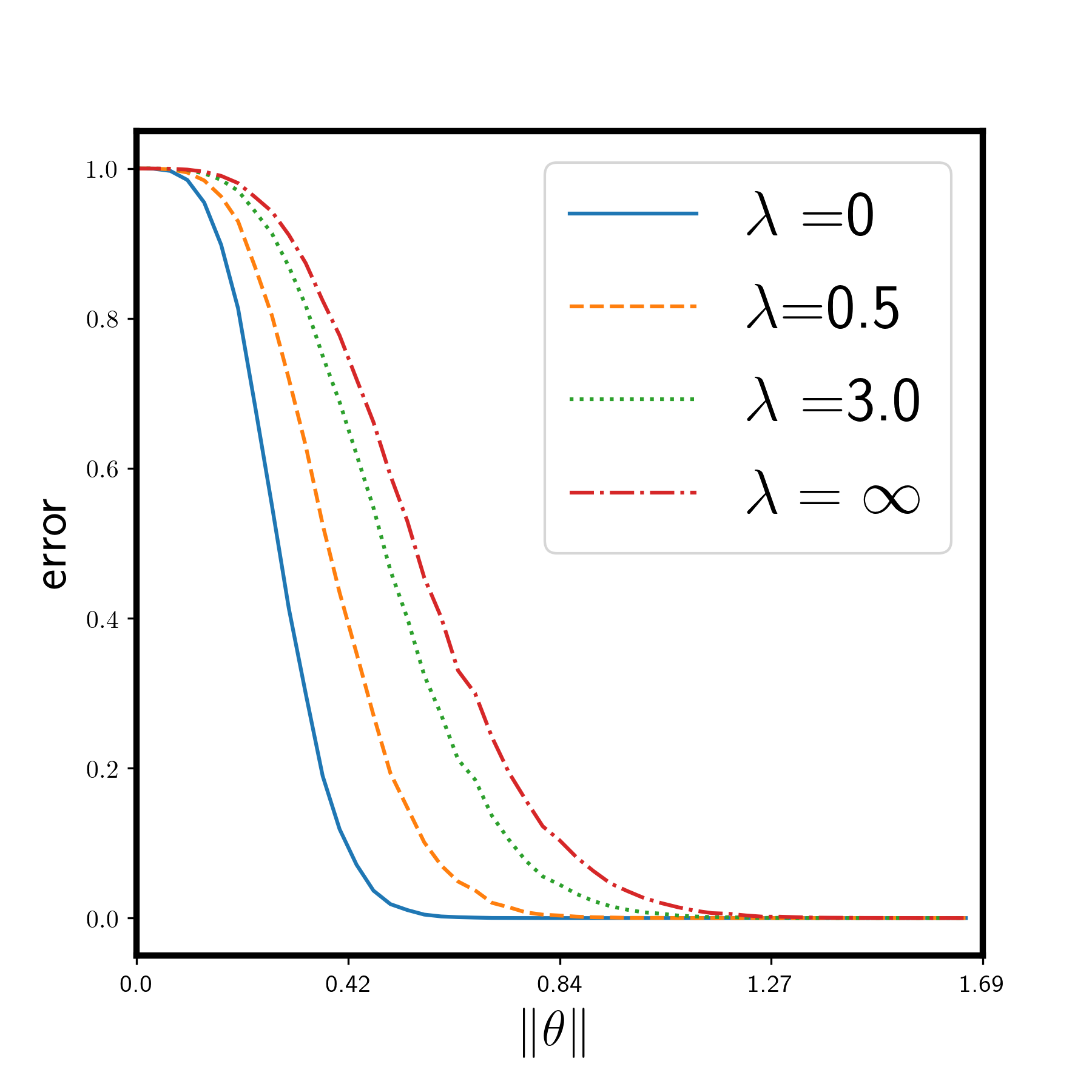}
  %\caption{$k^* = 3$}\label{fig:awesome_image2}
\endminipage
\caption{Comparison of the generalization error of four classifiers. The left plot corresponds to the case of large effective rank while the right one corresponds to medium effective rank.}
\label{fig:1}
\end{figure}
Our simulation setup is defined as follows. We choose $p = 500$ and $n=30$ to allow for overparametrization. We compare 4 estimators: the interpolating classifier $\hat{\eta}_0$,  classifiers $\hat{\eta}_\lambda$ for $\lambda = 0.5$ and $\lambda =3$, as well as the averaging estimator $\hat{\eta}_{\text{ave}}$. 
For each value of $\|\theta\|$, simulation was repeated $1000$ times; finally, we plot the empirical generalization error. 

For our first experiment (Figure \ref{fig:1}), we compare performance of the four classifiers in two cases:
\begin{itemize}
\item The case of large effective rank where we choose $\Sigma$ to be a diagonal matrix with $\lambda_i = (p-i+1)/p$ for all $i=1,\dots,p$. This case corresponds to $k^{*}(0) = 0$;
 \item The case of medium effective rank where we choose $\Sigma$ to be a diagonal matrix with $\lambda_1=\lambda_2=\lambda_3 = 1$ and $\lambda_4=\dots=\lambda_p= 0.01$. This case corresponds to $k^{*}(0) = 3$.
\end{itemize}

Consistent with the theoretical predictions, our simulations suggest that all classifiers have similar performance in the regime of large effective rank. Interestingly, interpolation seems to perform best when the effective rank is smaller than $n$. This confirms our observation that interpolation can be superior to regularization in some cases.

As for our second experiment (Figure \ref{fig:2}), we choose $\Sigma =  \mathbf{I}_p$ and we corrupt the training sample, as explained in Section \ref{sec:robust}, by setting randomly $n/2$ diagonal entries of the covariance to $1000$. Remember that this modification only impacts the training sample while the test sample has isotropic noise. 

\begin{figure}[ht]
\includegraphics[width=7cm]{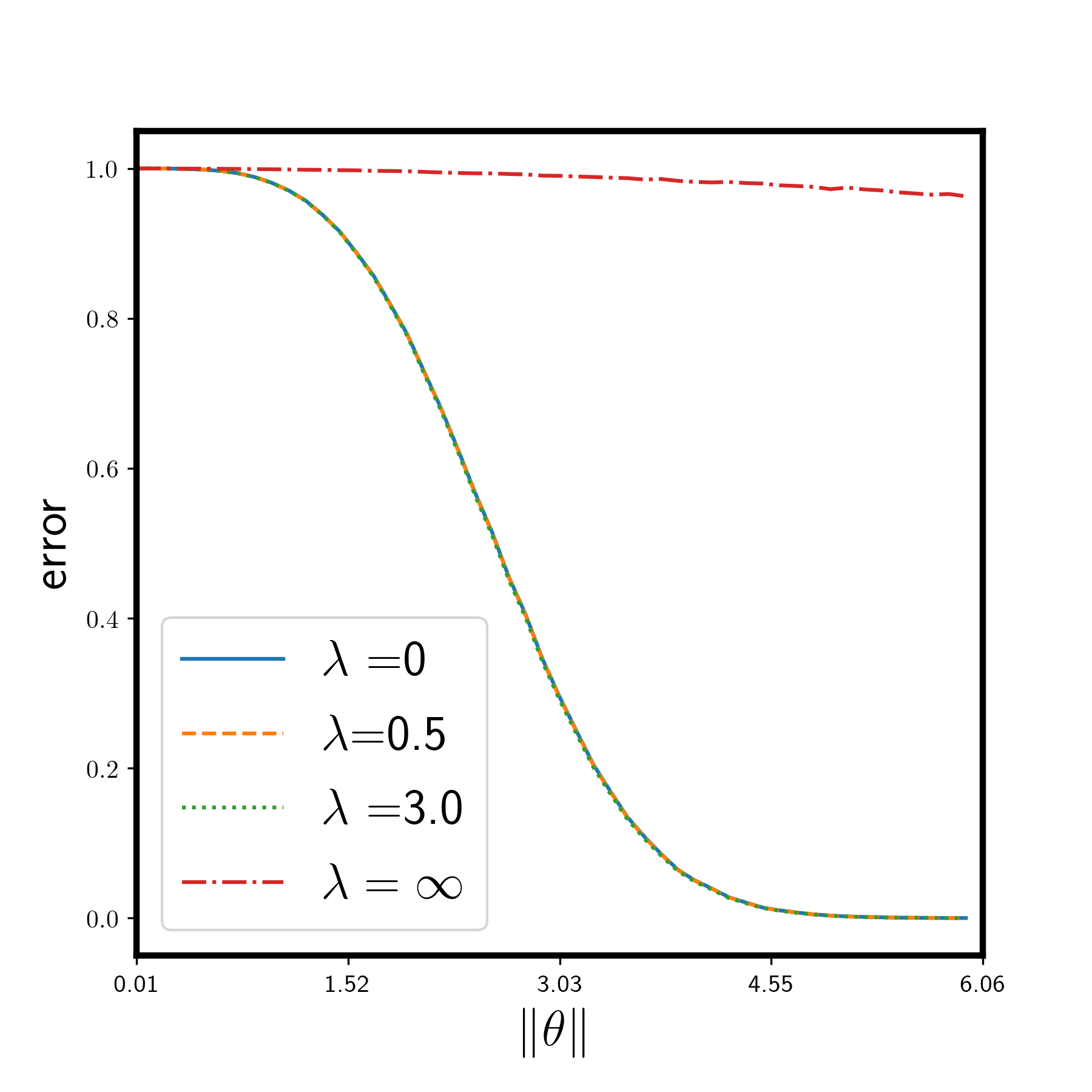}
\caption{Comparison of the performance of four classifiers under corruption.}
\label{fig:2}
\end{figure}
In this case, the averaging classifier fails to predict new labels correctly while the interpolating classifier is still able to classify well despite the corruption. Observe also that regularized classifiers (corresponding to the small regularization parameter) perform similar to the interpolating classifier. This occurs simply due to the fact that $k^*_{\tilde{\Sigma}}(\lambda) = k^*_{\tilde{\Sigma}}(0)$ for all values of $\lambda$ much smaller than the magnitude of the corruption, where $\tilde{\Sigma}$ is the corrupted covariance matrix. We conclude by reaffirming the claim that interpolation is not only harmless in high dimensions, but can outperform regularization and in some cases exhibits an unexpected feature of robustness.

\bibliographystyle{imsart-nameyear}
%\bibliography{ref.bib}

\appendix
%\newpage

\section{Proofs of minimax clustering}
\subsection{Proof of Theorem~\ref{thm:minimax}}
Fix $\lambda,r>0$ and let $\Sigma$ be a diagonal PSD matrix such that $r(\Sigma^2) = r$ and $\|\Sigma\|_\infty = \lambda$. Then
\[
\underset{\hat{\eta}}{\inf}\underset{r(\Sigma_o^2)=r, \|\Sigma_o\|_\infty = \lambda }{\sup} \; \underset{\|\theta\|^2 \geq \Delta^2 \lambda }{\sup} R_{\Sigma_o}(\hat{\eta})  \geq \underset{\hat{\eta}}{\inf} \underset{\|\theta\|^2 \geq \Delta^2 \lambda }{\sup} R_{\Sigma}(\hat{\eta}).
\]
We can simply focus on showing the result for the above diagonal matrix $\Sigma$. We will use the fact that
\[
2\;\underset{\hat{\eta}}{\inf} \underset{\|\theta\|^2 \geq \Delta^2 \lambda }{\sup} \mathbf{E}(\mathcal{R}_{\Sigma}(\hat{\eta})) \geq \underset{\tilde{\eta}}{\inf}\mathbb{E}_{\pi}\mathbf{E}_{(\theta,\eta,\eta_{n+1})}|\tilde{\eta}((Y,\eta);Y_{n+1})- \eta_{n+1}|
\]
for any prior $\pi$ on $(\theta,\eta,\eta_{n+1})$ such that $\|\theta\|^2 \geq \Delta^2 \lambda$. The quantity $\mathbf{E}_{(\theta,\eta,\eta_{n+1})}(\cdot)$ stands for the expectation corresponding to $(Y;Y_{n+1})$ following model \eqref{eq:model}. The proof is decomposed in two steps.
\begin{itemize}
\item \textbf{A dimension-independent lower bound:}
 Let $\bar{\theta}$ be a fixed vector in $\mathbb{R}^{p}$ such that $\|\bar{\theta}\|^2=\Delta^2 \lambda$.
We place independent Rademacher priors $\pi_i$ on each $\eta_i$ for $i=1,\dots,n+1$. It follows that
\begin{equation}\label{eq:proof:triv:1}
\underset{\tilde{\eta}}{\inf}\mathbb{E}_{\pi}\mathbf{E}_{(\bar{\theta},\eta,\eta_{n+1})}|\tilde{\eta}((Y,\eta);Y_{n+1})- \eta_{n+1}| \geq \underset{\bar{\eta}}{\inf}\mathbb{E}_{\pi_{n+1}}\mathbf{E}_{(\bar{\theta},\eta_{n+1})}|\bar{\eta}(Y_{n+1}) - \eta_{n+1}|, 
\end{equation}
where $\bar{\eta}(Y_{n+1}) = \mathbf{E}(\tilde{\eta}((Y,\eta);Y_{n+1})|Y_{n+1}) \in [-1,1]$ as we average over $(Y,\eta)$.
The last inequality holds in view of Jensen's inequality and the independence between $(Y,\eta)$ and $\eta_{n+1}$. We define, for $\epsilon \in \{-1,1\}$, $\tilde{f}_{\epsilon}(\cdot)$ the density of the observation $Y_{n+1}$ conditionally on the value of $
\eta_{n+1}=\epsilon$. Now, using Neyman-Pearson lemma and the explicit form of $\tilde{f}_{\epsilon}$, we get that the selector $\eta^{*}$ given by
$$
\eta^{*} = \sign\left( \bar{\theta}^\top \Sigma^{-1} Y_{n+1}\right),
$$
is optimal as it achieves the minimum of the RHS of \eqref{eq:proof:triv:1}. 

To show that, we remind the reader that the distribution of $Y_{n+1}=\eta_{n+1}\bar{\theta}+W_{n+1}$, conditionally on $\eta_{n+1}$, is given by $\mathcal{N}(\eta_{n+1}\bar{\theta},\Sigma)$. Hence $$\tilde{f}_\epsilon(Y_{n+1})=(2\pi)^{-p/2}|\Sigma|^{-1/2}e^{-\frac{1}{2}(Y_{n+1}-\epsilon \bar{\theta})^\top \Sigma^{-1}(Y_{n+1}-\epsilon\bar{\theta})}.$$
It follows that
$$
\begin{aligned}
\frac{\tilde{f}_{1}(Y_{n+1})}{\tilde{f}_{-1}(Y_{n+1})}&=\frac{(2\pi)^{-p/2}|\Sigma|^{-1/2}e^{-\frac{1}{2}(Y_{n+1}- \bar{\theta})^\top \Sigma^{-1}(Y_{n+1}-\bar{\theta})}}{(2\pi)^{-p/2}|\Sigma|^{-1/2}e^{-\frac{1}{2}(Y_{n+1}+\bar{\theta})^\top \Sigma^{-1}(Y_{n+1}+\bar{\theta})}}\\
&=e^{2\bar{\theta}^\top \Sigma^{-1}Y_{n+1}}\\
\end{aligned}
$$
By Neyman-Pearson lemma, we can now conclude that
$$
\eta^{*} = \sign\left( \bar{\theta}^\top \Sigma^{-1} Y_{n+1}\right).
$$

Plugging this value in \eqref{eq:proof:triv:1}, we get further that 
$$
\underset{\bar{\eta}}{\inf}\mathbf{E}_{\pi}|\bar{\eta}(Y_{n+1}) - \eta_{n+1}| = 2\mathbf{E}\R_{\Sigma}(\eta^*).
$$
It is  straightforward to see that 
\[\mathbf{E}(\R_{\Sigma}(\eta^*))=\Phi^c\left(\sqrt{\bar{\theta}^\top \Sigma^{-1} \bar{\theta}}\right) \geq C e^{-c \bar{\theta}^\top \Sigma^{-1} \bar{\theta}},\]
for some $c,C>0$ where $\Phi(\cdot)$ is the standard normal cumulative function.
The last inequality holds for any $\bar{\theta}$ as long as $\|\bar{\theta}\|^2=\Delta^2 \lambda$. The worst case is reached for $\bar{\theta}$ being co-linear with the top eigenvector of $\Sigma$ since $\bar{\theta}^\top \Sigma^{-1} \bar{\theta} = \Delta^2$. Hence we get the lower bound
$$
\underset{\hat{\eta}}{\inf} \underset{\|\theta\|^2 \geq \Delta^2 \lambda }{\sup} \mathbf{E}(\R_{\Sigma}(\hat{\eta})) \geq Ce^{-c \Delta^2}.
$$
In order to conclude we only need to derive the other lower bound
$$
\underset{\hat{\eta}}{\inf} \underset{\|\theta\|^2 \geq \Delta^2 \lambda }{\sup} \mathbf{E}(\R_{\Sigma}(\hat{\eta})) \geq  C e^{- c n \Delta^4/r}.
$$
For the rest of the proof we only focus on the case where $ 100 \leq \Delta^2 \leq 10r/n$, otherwise the  dimension independent lower bound dominates.

\item \textbf{A dimension-dependent lower bound:}

Since $\Sigma$ is diagonal, we write $\Sigma=\diag(d_1,\dots,d_p)$ where $\lambda = d_1 \geq d_2 \geq \dots \geq d_p > 0$.
According to Theorem $1$ in \cite{ndaoud2018sharp} we have
\begin{multline*}  
2\;\underset{\hat{\eta}}{\inf} \underset{\|\theta\|^2 \geq \Delta^2 \lambda }{\sup} \mathbf{E}(\R_{\Sigma}(\hat{\eta})) 
\\
\geq \underset{T\in[-1,1]}{\inf}\mathbb{E}_{\pi} \mathbf{E}_{(\theta,\eta,\eta_{n+1})}|T((Y,\eta);Y_{n+1}) - \eta_{n+1}|  - 2\pi(\|\theta\|^2 \leq \Delta^2 \lambda) ,
\end{multline*}
for any prior $\pi$ on $(\theta,\eta,\eta_{n+1})$. The second term in the above lower bound accounts for the constraint on $\theta$. 
In what follows, we fix $\eta$ and choose $\pi^D$ to be a product prior on  ($\theta$,$\eta_{n+1}$) such that $\eta_{n+1}$ is a Rademacher random variable and $\theta$ is an independent random vector such that $\theta \sim\mathcal{N}(0,D)$, where $D$ is a diagonal matrix such that $D_{jj} = 2\Delta^2 \lambda \frac{d_j^2}{\sum_{i=1}^p d_i^2}$. Using the Hanson-Wright inequality \cite{rudelson2013hansonwright}, we deduce that
\[
\pi^D(\|\theta\|^2 \leq \Delta^2 \lambda) \leq Ce^{-c\; r},
\]
for some $c,C>0$. Since $\Delta^2 \leq 10 r/n$, we only need to show that, for $n$ large enough, we have
\[
\underset{T\in[-1,1]}{\inf}\mathbb{E}_{\pi} \mathbf{E}_{(\theta,\eta,\eta_{n+1})}|T((Y,\eta);Y_{n+1}) - \eta_{n+1}| \geq Ce^{-cn\Delta^4/r},
\]
for some $c,C>0$.
We define, for $\epsilon \in \{-1,1\}$, $\tilde{f}_{\epsilon}$ to be the density of the observation $(Y;Y_{n+1}) \in \mathbb{R}^{p \times (n+1)}$ given $\eta_{n+1}=\epsilon$. Using Neyman-Pearson lemma, we get that
$$
\eta^{**} = \left\{
\begin{array}{ll}
    1 & \mbox{if } \tilde{f}_{1}(Y;Y_{n+1}) \geq \tilde{f}_{-1}(Y;Y_{n+1}),\\
    -1 & \mbox{else,}
\end{array}
\right.
$$
minimizes $\mathbb{E}_{\pi^D} \mathbf{E}_{(\theta,\eta,\eta_{n+1})}|T(Y;Y_{n+1}) - \eta_{n+1}|$ over all functions of $(Y;Y_{n+1})$ with values in $[-1,1]$.
Using the independence of the rows of $(Y;Y_{n+1)}$, we have
$$ \tilde{f}_{\epsilon}(Y;Y_{n+1}) = \prod_{j=1}^{p}\frac{e^{-\frac{1}{2}L_{j}^{\top}(\Sigma^j_{\epsilon})^{-1}L_{j}}}{(2\pi)^{p/2}|\Sigma^j_{\epsilon}|}, $$
where $L_{j}$ is the $j$-th row of the matrix $(Y;Y_{n+1)}$ and $\Sigma^j_\epsilon=d_j\mathbf{I}_{n+1} + D_{jj} \eta_{\epsilon} \eta_{\epsilon}^{\top}$ (here $\eta_{\epsilon}$ is the binary vector such that $\eta_{\epsilon,i} = \eta_i$ for all $i=1,\dots,n$ and $\eta_{\epsilon,n+1} = \epsilon$). It is easy to check that $|\Sigma^j_{\epsilon}|=d_j^{n}(d_j+D_{jj}(n+1))$, hence it does not depend on $\epsilon$. A simple calculation leads to
\begin{align*}
(\Sigma^j_{\epsilon})^{-1} &= (1/d_j) \mathbf{I}_{n}- \frac{D_{jj}/d_j^2}{1+D_{jj}n/d_j}\eta_{\epsilon}\eta_{\epsilon}^{\top}\\
&=(1/d_j)\mathbf{I}_n-\frac{2\Delta^2\lambda/\sum_{i} d_i^2}{1+2n\lambda \Delta^2 d_j/\sum_{i} d_i^2}\eta_{\epsilon}\eta_{\epsilon}^{\top}\\
&= (1/d_j) \mathbf{I}_{n}- \frac{2\Delta^2\lambda/\sum_{i} d_i^2}{1+2(n\Delta^{2}d_j)/(\lambda r)}\eta_{\epsilon}\eta_{\epsilon}^{\top}.
\end{align*}
Hence 
\begin{equation}
\begin{aligned}
\frac{\tilde{f}_{1}(Y)}{\tilde{f}_{-1}(Y)}&= \prod_{j=1}^{p}e^{-\frac{1}{2}L_{j}^{\top}((\Sigma_{1}^{j})^{-1} - (\Sigma^{j}_{-1})^{-1})L_{j}} \nonumber \\
& = \prod_{j=1}^{p}\exp\left({\frac{2\Delta^2\lambda/\sum_{i} d_i^2}{1+(2n\Delta^{2}d_j)/(\lambda r)}L_{j,n+1}\sum_{k=1}^nL_{jk}\eta_{k}}\right) \nonumber \\
&= \exp\left(\frac{2\Delta^2 \lambda}{\sum_i d_i^2}\sum_{k=1}^n \eta_{k} \sum_{j=1}^{p}{\frac{L_{jk}L_{j,n+1}}{1+(2n\Delta^{2}d_j)/(\lambda r)} }\right)\\
&= \exp\left({\frac{2\Delta^2 \lambda}{\sum_i d_i^2} \langle Y_{n+1},\sum_{k=1}^n \eta_{k} \tilde{D} Y_{k} \rangle }\right) , \nonumber
\end{aligned}
\end{equation}
where $\tilde{D}=\diag\left(\frac{1}{1+(2n\Delta^{2}d_i)/(\lambda r)}\right)_{i=1,\dots,p}$.
We conclude that the optimal selector is given by
$$
\eta^{**}=\sign \left(Y_{n+1}^\top\left(\sum_{k=1}^n\eta_k \tilde{D}Y_k\right)\right)
$$
and that 
$$
\mathbf{E}(\R(\eta^{**}))=\mb P((\tilde{D}Y \eta)^\top Y_{n+1}<0)
$$
Let us denote $\hat{\theta} :=\frac{1}{n} \sum_{i=1}^n Y_i\eta_i = \theta + \xi$ where $\xi =\frac{1}{n} \sum_{i=1}^n W_i\eta_i$. Then
\[
\mathbf{E}(\R(\eta^{**}))= \E\left( \Phi^c \left( \frac{\langle \theta , \tilde{D}\hat{\theta} \rangle}{\sqrt{\hat{\theta}^\top\tilde{D} \Sigma \tilde{D}\hat{\theta}}}\right)\right).
\]
Observing that the eigenvalues of $\tilde{D}$ belong to $[1/3,1]$ and that $ \tilde{D} \Sigma \tilde{D} \succeq \Sigma /9$ in a PSD sense,  it is clear that
\[
\mathbf{E}(\R(\eta^{**}))\geq  \E\left( \Phi^c \left( \frac{3\langle \theta ,\tilde{D}\hat{\theta} \rangle}{\sqrt{\hat{\theta}^\top \Sigma \hat{\theta}}}\right)\right) \geq C\E\left(\exp\left(-c \frac{\|\theta\|^4+\langle \theta ,\tilde{D}\xi \rangle^2}{\hat{\theta}^\top \Sigma \hat{\theta}} \right)\right),
\]
for some $c,C>0$. Therefore
\[
\mathbf{E}(\R(\eta^{**})) \geq C\E\left(\exp\left(-c \frac{\|\theta\|^4+\langle \theta ,\tilde{D}\xi \rangle^2}{\xi^\top \Sigma \xi - 2 \|\theta\|^2 \lambda} \right)\right).
\]
Consider three events
\[
\mathbb{A}_1 = \{ \|\theta\|^2 \leq 2 \Delta^2 \lambda \},
\]
\[
\mathbb{A}_2 = \{ \xi^\top \Sigma \xi \geq r\lambda^2 /2n\},
\]
\[
\mathbb{A}_3 = \{ \langle \theta ,\tilde{D}\xi \rangle^2 \leq 2\Delta^4\lambda^2\}.
\]
Since $\Delta^4 \geq \Delta^2$ by assumption, we get
\[
\mathbf{E}(\R(\eta^{**})) \geq Ce^{-c'n\Delta^4/r} (1-\pi^{D}(\mathbb{A}^c_1)-\mathbf{P}(\mathbb{A}^c_2)-\mathbf{P}( \mathbb{A}^c_3)).
\]
Using Hanson-Wright inequality, we deduce that
\[
\pi^{D}(\mathbb{A}^c_1) + \mathbf{P}(\mathbb{A}^c_2) \leq 2e^{-cr} \leq 1/4,
\]
since $r/n \geq 10$. Moreover, we also have that
\[
\mathbf{P}(\mathbb{A}^c_3) \leq  \pi^{D}(e^{-c \Delta^4\lambda/\|\theta\|^2} )  \leq e^{-c\Delta^2} + \pi^{D}(\mathbb{A}^c_1) \leq 1/4,
\]
since $\Delta^2 \geq 100$. The proof is now complete.
\end{itemize}

\subsection{Proof of Theorem \ref{thm:upper}}
Let $\theta$ be a vector in $\mb R^p $. Without loss of generality we may assume that $\|\theta\|^4 \geq C_1 \theta^\top \Sigma \theta$ for some constant $C_1>0$ large enough, otherwise the result is trivial as the upper bound becomes of constant order. For the rest of the proof, we use the notation $\hat{\eta}:=\hat{\eta}_{\text{ave}}$. We start by observing that
\[
\mb{P}((\hat{\eta}(Y_{n+1})\neq \eta_{n+1})=\mb{P}\left(\left \langle \sum_{i=1}^n Y_i\eta_i,Y_{n+1}\eta_{n+1}\right\rangle<0\right).
\]
Let us denote $\hat{\theta} =\frac{1}{n} \sum_{i=1}^n Y_i\eta_i = \theta + \xi$ where $\xi =\frac{1}{n} \sum_{i=1}^n W_i\eta_i$.  We get the following upper bound
\[
\mb{P}((\hat{\eta}(Y_{n+1})\neq \eta_{n+1}) = \mb{P}(\langle \hat{\theta} , \theta + \eta_{n+1}W_{n+1}\rangle \leq 0) \leq \E\left(e^{-\frac{\langle \theta , \hat{\theta} \rangle^2}{2\hat{\theta}^\top \Sigma \hat{\theta}}}\right).
\]
 where we have conditioned on $\hat{\theta}$ to get the last inequality and used the fact that $\eta_{n+1}w_{n+1}$ has also i.i.d $1$-sub-Gaussian entries. Next, we have that 
\[
\langle \hat{\theta},\theta \rangle^2 = \left\langle \theta + \xi,\theta \right\rangle^2
\geq \frac{\|\theta\|^4}{2} - \left\langle \xi,\theta \right\rangle^2,\]
and that
\[
\hat{\theta}^\top \Sigma \hat{\theta} \leq 2 \|\Sigma\|_\infty \|\theta\|^2 + 2 \xi^\top\Sigma \xi.
\]
Hence
\[
\R(\hat{\eta}) \leq e^{-\frac{\|\theta\|^4- 2\left\langle \xi,\theta \right\rangle^2}{4 \|\Sigma\|_\infty \|\theta\|^2 + 4 \xi^\top\Sigma \xi}}.
\]
Let us define now the event
$$
\mathcal{A}=\{ \xi^\top \Sigma\xi\leq 3/2\Tr(\Sigma^2)/n + \|\Sigma\|^2_\infty \log(1/\delta)/n\}\cap\{4 \langle \xi,\theta\rangle^2\leq  \|\theta\|^4\}.
$$                                                              
Since $n\xi^\top \Sigma\xi=_d w^\top \Lambda^2w=\|\Lambda w\|^2, \mb E(\xi^\top \Sigma\xi)=\Tr(\Sigma^2)/n$ and $\sqrt{n}\xi^\top \theta=_d\theta^\top V \Lambda^{1/2} w$ where $w$ has independent $1-$sub-Gaussian entries.
Using Lemma \ref{lem:subG}, we get that
\[
\mb P(\mathcal{A}^c)\leq \delta  + e^{-cn\|\theta\|^4/\theta^\top \Sigma \theta} \leq \delta + e^{-c n},
\]
for some $c>0$ small enough.
Observe that, on event $\mathcal{A}$, we have
\[
e^{-c\frac{\|\theta\|^4- 2\left\langle \xi,\theta \right\rangle^2}{4 \|\Sigma\|_\infty \|\theta\|^2 + 4 \xi^\top\Sigma \xi}} \leq   
 \exp\left(-c\frac{\|\theta\|^4}{\theta^{\top}\Sigma \theta+ \frac{\Tr(\Sigma^2) + \|\Sigma\|^2_{\infty}\log(1/\delta)}{n} }\right).
\]
This show the result in probability. We next assume that $\|\theta\|^2 \geq \Delta^2 \|\Sigma\|_\infty$.
Replacing $\log(1/\delta)$ by $cn \Delta^2$, we get that
\[
\mb P(\mathcal{A}^c)\leq 2 e^{-cn\Delta^2} \leq 2e^{-c\frac{\Delta^4}{\Delta^2+\frac{r(\Sigma^2)}{n}}}.
\]
Moreover, on event $\mathcal{A}$, we have now
\[
e^{-c\frac{\|\theta\|^4- 2\left\langle \xi,\theta \right\rangle^2}{4 \|\Sigma\|_\infty \|\theta\|^2 + 4 \xi^\top\Sigma \xi}} \leq   
 \exp\left(-c\frac{\Delta^4}{\Delta^2+ \frac{r(\Sigma^2)}{n} }\right).
\]
Therefore, we see that
\[
\E(\R(\hat{\eta})) \leq \E\left(e^{-c\frac{\|\theta\|^4- 2\left\langle \xi,\theta \right\rangle^2}{4 \|\Sigma\|_\infty \|\theta\|^2 + 4 \xi^\top\Sigma \xi}}\mathbf{1}\{ \mathcal{A}\}\right) + 2e^{-c\frac{\Delta^4}{\Delta^2+\frac{r(\Sigma^2)}{n}}}.
\]
We conclude that 
\[
\E(\R(\hat{\eta})) \leq C\exp\left(-c''\frac{\Delta^4}{\Delta^2+\frac{r(\Sigma^2)}{n}}\right),
\]
for some $c'',C>0$.

\subsection{Proof of Proposition \ref{prop:LDA}}
Let $\theta$ be a vector in $\mb R^p $. For the rest of the proof, we use the notation $\hat{\eta}:=\hat{\eta}_{\text{LDA}}$. We start by observing that
\[
\mb{P}((\hat{\eta}(Y_{n+1})\neq \eta_{n+1})=\mb{P}\left(\left \langle \sum_{i=1}^n \Sigma^{-1/2}Y_i\eta_i,\Sigma^{-1/2}Y_{n+1}\eta_{n+1}\right\rangle<0\right).
\]
Let us denote $\hat{\theta} =\frac{1}{n} \sum_{i=1}^n \Sigma^{-1/2}Y_i\eta_i = \Sigma^{-1/2}\theta + \xi$ where $\xi =\frac{1}{n} \sum_{i=1}^n \Sigma^{-1/2}W_i\eta_i$.  We get the following lower bound under Gaussian noise
\begin{multline*}
\mb{P}((\hat{\eta}(Y_{n+1})\neq \eta_{n+1})
\\
= \mb{P}(\langle \hat{\theta} , \Sigma^{-1/2}\theta + \eta_{n+1}\Sigma^{-1/2}W_{n+1}\rangle \leq 0) \geq C\E\left(e^{-c\frac{\langle \Sigma^{-1/2}\theta , \hat{\theta} \rangle^2}{\|\hat{\theta}\|^2}}\right)
\end{multline*}
 for some $c,C>0$, where we have conditioned on $\hat{\theta}$ to get the last inequality and used the fact that $\eta_{n+1}w_{n+1}$ has i.i.d standard Gaussian entries. Next, we have that 
\[
\langle \hat{\theta},\Sigma^{-1/2}\theta \rangle^2 = \left\langle \Sigma^{-1/2}\theta + \xi,\Sigma^{-1/2}\theta \right\rangle^2
\leq 2\|\theta\|_{\Sigma}^4 + 2\left\langle \xi,\Sigma^{-1/2}\theta \right\rangle^2,\]
and that
\[
\|\hat{\theta}\|^2 \geq \|\theta\|^2_{\Sigma} + \|\xi\|^2 - 2\left| \theta^\top \Sigma^{-1/2}\xi\right|.
\]
Hence
\[
\mathbf{E}(\R(\hat{\eta})) \geq C\E\left(e^{-c\frac{2\|\theta\|_{\Sigma}^4 + 2\left\langle \xi,\Sigma^{-1/2}\theta \right\rangle^2}{ \|\theta\|^2_{\Sigma} + \|\xi\|^2 - 2\left| \theta^\top \Sigma^{-1/2}\xi\right|}}\right).
\]
Let us define now the event
$$
\mathcal{A}=\{ \|\xi\|^2 \geq p/(2n)\}\cap\left\{\left| \theta^\top \Sigma^{-1/2}\xi\right|\leq \|\theta\|_{\Sigma}/8\right\}.
$$                                                              
Since $n\|\xi\|^2=_d \| w\|^2, \mb E(\|\xi\|^2)=p/n$ and $\sqrt{n}\xi^\top\Sigma^{-1/2} \theta=_d \|\theta\|_{\Sigma}w_1$ where $w$ has independent standard Gaussian entries.
It is easy to see that
\[
\mb P(\mathcal{A}^c)\leq e^{-cn} + e^{-cp} \leq 1/2,
\]
for some $c>0$ small enough and $n,p$ large enough. Observe that, on event $\mathcal{A}$, we have
\[
e^{-c\frac{2\|\theta\|_{\Sigma}^4 + 2\left\langle \xi,\Sigma^{-1/2}\theta \right\rangle^2}{ \|\theta\|^2_{\Sigma} + \|\xi\|^2 - 2\left| \theta^\top \Sigma^{-1/2}\xi\right|}} \geq e^{-c\frac{2\|\theta\|_{\Sigma}^4 + 2\|\theta\|^2_{\Sigma}}{ \|\theta\|^2_{\Sigma} + p/(2n) - \|\theta\|_{\Sigma}/4}}.
\]
Hence,
\[
\E(\R(\hat{\eta})) \geq C\E\left(e^{-c\frac{2\|\theta\|_{\Sigma}^4 + 2\left\langle \xi,\Sigma^{-1/2}\theta \right\rangle^2}{ \|\theta\|^2_{\Sigma} + \|\xi\|^2 - 2\left| \theta^\top \Sigma^{-1/2}\xi\right|}} \mathbf{1}\{\mathcal{A}\}\right) \geq  C' e^{-c\frac{2\|\theta\|_{\Sigma}^4 + 2\|\theta\|^2_{\Sigma}}{ \|\theta\|^2_{\Sigma} + p/(2n) - \|\theta\|_{\Sigma}/4}}.
\]
for $C'=C/2$. If $\|\theta\|_{\Sigma} \geq 1/2$ the result is straightforward. Otherwise, if $\|\theta\|_{\Sigma} \leq 1/2$, then
\[
\frac{2\|\theta\|_{\Sigma}^4 + 2\|\theta\|^2_{\Sigma}}{ \|\theta\|^2_{\Sigma} + p/(2n) - \|\theta\|_{\Sigma}/4} \leq  \frac{C'' n}{p} \leq C''',
\]
and the bound is constant in this case. As a conclusion we get
\[
 \E(\R(\hat{\eta}_{\text{LDA}})  )\geq C' \exp\left(-c\frac{\|\theta\|^4_{\Sigma}}{\|\theta\|^2_{\Sigma}+ \frac{p}{n}}\right),
\]
for some $c,C'>0$.

\section{Proofs of regularization vs interpolation}

\subsection{Proof of Theorem~\ref{thm:interpolation1}}
Recall that
\begin{equation}\label{eq:reg}
\R(\hat{\eta}_{\lambda}) \leq e^{-\frac{\langle \theta , \hat{\theta}_{\lambda} \rangle^2}{2\hat{\theta}_\lambda^\top \Sigma \hat{\theta}_\lambda}},
\end{equation}
conditionally on $\hat{\theta}_\lambda$.
Observe that $\hat{\theta}_\lambda = \theta x/n + W^\top A_{\lambda}^{-1}\eta/n$ where $A_\lambda=\lambda I_n+Y^\top  Y/n$ and $x=\eta^\top  A_\lambda^{-1}\eta$. The risk is invariant by rescaling $\hat{\theta}_\lambda$ hence we rescale it by $n/x$. Hence, without loss of generality, we may assume that $\hat{\theta}_\lambda = \theta + W^\top H_{\lambda}^{-1}\eta/x$. 
Using Lemma~\ref{lem:3}, we have
\[
\hat{\theta}_\lambda = \left( I_p - W A_{\lambda}^{-1}W^\top /n \right)\theta + \frac{1+\eta^\top A_{\lambda}^{-1} W^\top \theta/n}{\eta^{\top} A_{\lambda}^{-1} \eta } W A_{\lambda}^{-1}\eta.
\]
On the one hand,
\[
|\langle \theta , \hat{\theta}_{\lambda} \rangle| \geq  \|\theta\|^{2} - \theta^{\top}W A_{\lambda}^{-1}W^\top\theta /n - \frac{|\eta^\top A_{\lambda}^{-1} W^\top \theta|}{\eta^{\top} A_{\lambda}^{-1} \eta} + \frac{\left(\theta^\top W A_{\lambda}^{-1}\eta\right)^2}{n\cdot \eta^{\top} A_{\lambda}^{-1} \eta}.
\]
On the other hand,
\begin{multline*}
\hat{\theta}_{\lambda}^\top \Sigma \hat{\theta}_{\lambda}\leq 2\Bigg(\left\|\Sigma^{1/2}\left( I_p - W A_{\lambda}^{-1}W^\top /n \right) \theta \right\|^2 
\\
+ \frac{2+2(\eta^\top A_{\lambda}^{-1} W^\top \theta/n)^2}{(\eta^{\top} A_{\lambda}^{-1} \eta)^2}\eta^\top  A_{\lambda}^{-1}W^\top \Sigma WA_{\lambda}^{-1} \eta\Bigg).
\end{multline*}
We will now control the numerator and denominator separately.
\begin{itemize}
\item Control of the numerator in \eqref{eq:reg}:

Let us denote $\theta_* := \pi_{k^{*}}\theta$ and $\bar{\theta} = \theta - \theta^{*}$. Observing that $I_p - W A_{\lambda}^{-1}W^\top /n$ is PSD with spectral norm less than $1$, we have
\begin{multline*}
\theta^\top\left( I_p - W A_{\lambda}^{-1}W^\top /n \right)\theta 
\\
\geq \bar{\theta}^\top\left( I_p - W A_{\lambda}^{-1}W^\top /n \right)\bar{\theta}/2 - \theta_{*}^\top\left( I_p - W A_{\lambda}^{-1}W^\top /n \right)\theta_*.
\end{multline*}
Therefore,
\[
|\langle \theta , \hat{\theta}_{\lambda} \rangle| \geq  \|\bar{\theta}\|^{2}/2 - \| A_{\lambda}^{-1/2}W^\top\bar{\theta}\|^2 /(2n) - \|\theta_*\|^2 - \frac{\|A_{\lambda}^{-1/2} W^\top \theta\|}{\sqrt{\eta^{\top} A_{\lambda}^{-1} \eta}}.
\]
Using Lemma \ref{lem:control_1} and Lemma \ref{lem:control_2}, we get that
\[
|\langle \theta , \hat{\theta}_{\lambda} \rangle| \geq \|\bar{\theta}\|^{2}/4 - C_1\frac{ \bar{\theta}^\top \Sigma \bar{\theta}}{\sum_{i>k^*}\lambda_i/n + \lambda} - C_2 \sqrt{\theta^\top \Sigma \theta},
\]
as $\|\theta_*\|^2 \leq \|\bar{\theta}\|^2/4$. Since $\bar{\theta}^\top \Sigma \bar{\theta} \leq \lambda_{k^{*}+1}\|\bar{\theta}\|^2$, then  
\[
|\langle \theta , \hat{\theta}_{\lambda} \rangle| \geq \|\bar{\theta}\|^{2}/8 - C_2 \sqrt{\theta^\top \Sigma \theta} \geq \|\theta\|^{2}/10 - C_2 \sqrt{\theta^\top \Sigma \theta}.
\]
If $\|\theta\|^{2}/10 \leq  2C_2 \sqrt{\theta^\top \Sigma \theta}$, then the bound in Theorem \ref{thm:interpolation1} is trivial. We conclude that 
\[
|\langle \theta , \hat{\theta}_{\lambda} \rangle| \geq C_3\|\theta\|^{2},
\]
for some $C_3>0$.
\item Control of the denominator in \eqref{eq:reg}:
\begin{multline*}
\hat{\theta}_{\lambda}^\top \Sigma \hat{\theta}_{\lambda}
\leq C\Bigg(
\theta^\top \Sigma \theta + \|\Sigma^{1/2} W A_{\lambda}^{-1}W^\top  \theta\|^2/n^2 
\\
+ \frac{2+2(\eta^\top A_{\lambda}^{-1} W^\top \theta/n)^2}{(\eta^{\top} A_{\lambda}^{-1} \eta)^2}\eta^\top  A_{\lambda}^{-1}W^\top \Sigma WA_{\lambda}^{-1} \eta\Bigg).
\end{multline*}
Using the same bound as for the numerator and the fact that 
$$\|W^\top \theta\|^{2} \leq Cn \theta^\top \Sigma \theta,$$ with probability $1-e^{-cn}$, we get further that
\[
\begin{aligned}
\hat{\theta}_{\lambda}^\top \Sigma \hat{\theta}_{\lambda}&\leq C\left(\theta^\top \Sigma \theta(1 + \|\Sigma^{1/2} W A_{\lambda}^{-1}\|_{\infty}^2/n )\right.\\
&\quad+ \left.\left(\left(\sum_{i>k^*}\lambda_i/n + \lambda\right)^2 + \theta^\top \Sigma \theta\right)\frac{\eta^\top  A_{\lambda}^{-1}W^\top \Sigma WA_{\lambda}^{-1} \eta}{n^2}\right).
\end{aligned}
\]
Using Lemma \ref{lem:subG} we have that with probability $1-\delta$
\[
\eta^\top  A_{\lambda}^{-1}W^\top \Sigma WA_{\lambda}^{-1} \eta \leq 3/2\Tr(A_{\lambda}^{-1}W^\top \Sigma WA_{\lambda}^{-1}) + \|A_{\lambda}^{-1}W^\top \Sigma WA_{\lambda}^{-1}\|_{\infty}\log(1/\delta).
\]
Hence, using Lemma \ref{lem:control_3} and Lemma \ref{lem:control_4}, we get further that with probability $1-\delta$
\[
\begin{multlined}
\eta^\top  A_{\lambda}^{-1}W^\top \Sigma WA_{\lambda}^{-1} \eta \leq C\left(k^{*} n +n \frac{\sum_{i>k^*} \lambda_{i}^{2}}{(\sum_{i>k^*}\lambda_i/n+\lambda)^{2}}\right)\\ + \left(k^{*} n + \frac{\sum_{i>k^*} \lambda_{i}^{2} + \lambda^2_{k^*+1}n}{(\sum_{i>k^*}\lambda_i/n+\lambda)^{2}}\right)\log(1/\delta),
\end{multlined}
\]
and that
\[
\|\Sigma^{1/2} W A^{-1}\|_{\infty}^2/n \leq C\left(k^{*}+\frac{\sum_{i>k^*} \lambda_{i}^{2}/n + \lambda^2_{k^*+1}}{(\sum_{i>k^*}\lambda_i/n+\lambda)^{2}}\right) \leq C(1+k^{*}).
\]
Notice also that
\[
\frac{\eta^\top  A_{\lambda}^{-1}W^\top \Sigma WA_{\lambda}^{-1} \eta}{n^2} \leq C_1+ C_2 \left(k^{*} + 1\right)\log(1/\delta)/n.
\]
We conclude that with probability $1- \delta_1 -\delta_2$
\[
\begin{multlined}
\hat{\theta}_{\lambda}^\top \Sigma \hat{\theta}_{\lambda}\leq C \left(\theta^\top \Sigma \theta( 1+k^{*})(1+\log(1/\delta_1)/n)+\frac{k^{*}\lambda^2_{k^*} + \sum_{i>k^*} \lambda_i^2}{n} \right. \\
\left. \quad + \frac{(k^{*}\lambda_{k^*}^2+\lambda_{k^*+1}^2)\log(1/\delta_2)}{n}\right).
\end{multlined}
\]
Taking $\delta_1=e^{-cn}$ yields that 
\[
\hat{\theta}_{\lambda}^\top \Sigma \hat{\theta}_{\lambda}\leq C \left(\theta^\top \Sigma \theta (1+k^{*}) + \frac{k^{*}\lambda^2_{k^*} + \sum_{i>k^*} \lambda_i^2}{n} + \frac{(k^{*}\lambda_{k^*}^2+\lambda_{k^*+1}^2)\log(1/\delta_2)}{n}\right).
\]
Above, we have used the fact that $k$ the smallest integer that satisfies $r_k(\Sigma)+\lambda n/\lambda_{k+1}>bn$ for $b\geq 1$.  We treat two cases. 
 (a) If $\Tr(\Sigma)/n + \lambda \geq b\|\Sigma\|_\infty$ then we can take $k^*=0$. Now if (b) $\Tr(\Sigma)/n + \lambda \leq b\|\Sigma\|_\infty$, then $k^* \geq 1$ and 
\[
\sum_{i > k^*} \lambda_i/n+\lambda \leq b \lambda_{k^*}.
\]
\end{itemize}

\subsection{Proof of Proposition~\ref{cor:interpolation1}}
Let $\theta \in \mathcal{C}_{k^*}(\Sigma)$. If $k^*=0$, the result follows immediately from Theorem \ref{thm:interpolation1}. We assume that $k^*>0$ (and hence $\sum_{i>k^*}\lambda_i/n + \lambda \leq b\|\Sigma\|_\infty$). Following the same steps as in the proof if Theorem \ref{thm:interpolation1}, we have for the numerator that
\[
|\langle \theta , \hat{\theta}_{\lambda} \rangle| \geq C_3\|\theta\|^{2},
\]
for some $C_3>0$ with probability $1-e^{-cn}$. As for the denominator,
\begin{multline*}
\hat{\theta}_{\lambda}^\top \Sigma \hat{\theta}_{\lambda}\leq 2\Bigg(\left\|\Sigma^{1/2}\left( I_p - W A_{\lambda}^{-1}W^\top /n \right) \theta \right\|^2 
\\
+ \frac{2+2(\eta^\top A_{\lambda}^{-1} W^\top \theta/n)^2}{(\eta^{\top} A_{\lambda}^{-1} \eta)^2}\eta^\top  A_{\lambda}^{-1}W^\top \Sigma WA_{\lambda}^{-1} \eta\Bigg)\\
\leq 2 \|\theta\|^2\|\Sigma\|_{\infty} + C\Big((\sum_{i>k^*}\lambda_i/n + \lambda)^2 \\
+ (\sum_{i>k^*}\lambda_i/n + \lambda)\|A_{\lambda}^{-1/2}W^\top \theta\|^2/n \Big)\frac{\eta^\top  A_{\lambda}^{-1}W^\top \Sigma WA_{\lambda}^{-1} \eta}{n^2},
\end{multline*}
with probability $1-e^{-cn}$.
Since $\|A_{\lambda}^{-1/2}W^\top\|^2_\infty/n \leq 1$, then
\[
\hat{\theta}_{\lambda}^\top \Sigma \hat{\theta}_{\lambda} \leq \|\theta\|^2\|\Sigma\|_{\infty} ,
\]
with probability $1-e^{-cn}$.
Hence
\[
\underset{\substack{\|\theta\| \geq \Delta \|\Sigma\|_\infty \\ \theta \in \mathcal{C}_{k^*}(\Sigma)
 }}{\sup} \E(\R(\hat{\eta}_{\lambda}) ) \leq C\exp\left(-c\frac{\Delta^4}{\Delta^2 + \frac{r(\Sigma^2)}{n}}\right) + e^{-cn}.
\]

\subsection{Proof of Theorem~\ref{thm:prolif}}

Arguing as in the proof of Lemma~\ref{lem:3}, we have that
\[
n(Y^\top Y)^{-1}\eta = \frac{\sqrt{n}}{\|\theta\| det} \left(A^{-1}u(1+u^\top A^{-1}v) - A^{-1}v u^\top A^{-1}u\right),
\]
where $A = W^\top W / n$, $u = \|\theta\| \eta / \sqrt{n}$ and $v=W^\top \theta / \sqrt{n\|\theta\|^2}$ and $det>0$.
Hence $e_1^{\top}(Y^\top Y)^{-1}\eta$ has the same sign as 
\[
e_{1}^\top (W^\top W )^{-1}\eta (1+ \eta^{\top} (W^\top W )^{-1}W^\top \theta) - e_{1}^\top (W^\top W )^{-1}W^\top \theta \eta^\top  (W^\top W )^{-1} \eta.
\]
Using Lemma~\ref{lem:inverse_canonical}, we also have that
\[
e_1(W^\top W)^{-1}\omega = \frac{ \omega_1 - W_{1}^\top \tilde{W}(\tilde{W}^\top \tilde{W})^{-1}\tilde{\omega}}{ \|W_1\|^2-W_{1}^\top \pi W_1}.
\]
Notice that $\pi=\tilde{W}(\tilde{W}^\top \tilde{W})^{-1}\tilde{W}^\top$ is a $p\times p$ projection matrix. Since $W$ is full rank with overwhelming probability, then $\|W_1\|^2-W_{1}^\top \pi W_1=W_1^\top(I_p-\pi)W_1>0$ with high probability as well.
\yiqiu{}

Hence we only need to show that the following expression is positive:
\[
(1-\eta_1 W_{1}^\top \tilde{W}(\tilde{W}^\top \tilde{W})^{-1}\tilde{\eta})(1+ \eta^{\top} (W^\top W )^{-1}W^\top \theta) - \eta_1W_{1}^\top(I_p - \pi)\theta \eta^{\top} (W^\top W)^{-1}\eta.
\]
We first use the bound 
\[
\eta^{\top} (W^\top W )^{-1}W^\top \theta \leq \sqrt{\eta A_0^{-1} \eta}\|A_0^{-1/2}W^\top \theta\|/n \leq C \frac{ \sqrt{\theta^\top \Sigma \theta}}{\sum_{i>k^*}\lambda_i/n },
\]
with probability $1-e^{-cn}$. Under the condition $\sqrt{\theta^\top \Sigma} \theta \leq 1/(2C) \sum_{i>k^*}\lambda_i/n$, we have that
\[
1+ \eta^{\top} (W^\top W )^{-1}W^\top \theta \geq 1/2.
\]
Next, observe that $\eta_1W_{1}^\top(I_p - \pi)\theta$ is $1$-sub-Gaussian with parameter $\|\Sigma^{1/2}(I_p - \pi)\theta\|$. Using the bound from the proof of Theorem \ref{thm:interpolation1}, we get, with probability $1-e^{-cn}$, that
\[
\|\Sigma^{1/2}(I_p - \pi)\theta\|^2 \leq  C\|\theta^\top \Sigma \theta\|^2 (1+k^{*}).
\]
The display above yields that with probability at least $1-C/n^2$, 
\[
|\eta_1W_{1}^\top(I_p - \pi)\theta \eta^{\top} (W^\top W)^{-1}\eta| \leq C\frac{ \sqrt{\theta^\top \Sigma \theta(1+k^*)\log(n)}}{\sum_{i>k^*}\lambda_i/n} \leq 1/4,
\]
under the conditions of the Theorem.
Finally, we have that $\eta_1 W_{1}^\top \tilde{W}(\tilde{W}^\top \tilde{W})^{-1}\tilde{\eta}$ is $1-$sub-Gaussian with parameter $\|\Sigma^{1/2} \tilde{W}(\tilde{W}^\top \tilde{W})^{-1}\tilde{\eta}\|$.
Using the proof of Theorem \ref{cor:interpolation1} we have
\[
\begin{multlined}
\|\Sigma^{1/2} \tilde{W}(\tilde{W}^\top \tilde{W})^{-1}\tilde{\eta}\|^2 \leq 
C\left(k^{*}/n  + \frac{\sum_{i>k^*} \lambda_{i}^{2}}{n(\sum_{i>k^*}\lambda_i/n)^{2}}\right)\\
+ \left(k^{*}/n + \frac{\sum_{i>k^*} \lambda_{i}^{2}/n^2 + \lambda^2_{k^*+1}/n}{(\sum_{i>k^*}\lambda_i/n)^{2}}\right)\log(1/\delta).
\end{multlined}
\]
Hence
\[
\|\Sigma^{1/2} \tilde{W}(\tilde{W}^\top \tilde{W})^{-1}\tilde{\eta}\|^2 \leq 
C\left(k^{*}\log(1/\delta)/n  +  \frac{(\sum_{i>k^*} \lambda_{i}^{2} + \lambda^2_{k^*+1}\log(1/\delta))/n}{(\sum_{i>k^*}\lambda_i/n)^{2}}\right).
\]
Therefore with probability $1-C/n^2$, 
\begin{multline*}
|\eta_1 W_{1}^\top \tilde{W}(\tilde{W}^\top \tilde{W})^{-1}\tilde{\eta}|
\\
\leq  C\left(\sqrt{k^{*}\log^2(n)/n} +  \frac{\sqrt{\sum_{i>k^*} \lambda_{i}^{2}\log(n)/n} + \lambda_{k^*+1}\log(n)/\sqrt{n}}{\sum_{i>k^*}\lambda_i/n}\right).
\end{multline*}
We conclude that under the conditions $k^{*}\log^2(n) \leq C n$ and $\sum_{i>k^*} \lambda_{i}^{2}n\log(n) \leq C (\sum_{i>k^*} \lambda_{i})^2$ the result follows.
We can now apply the union bound for $i=1,\dots,n$ and deduce that the final bound holds with probability at least $1-1/n$.

\subsection{Proof of Theorem~\ref{thm:robust}}
We will follow the same steps as in the proof of Theorem \ref{thm:interpolation1}. Recall that, under contamination, the new observations $\tilde{Y}_i$ are such that 
\[
\theta \eta_i + \tilde{\Sigma}^{1/2}w_i,
\]
where $w_i$ are i.i.d. standard normal vectors and $\tilde{\Sigma}:=\mathbf{I}_p + O$ where $O = \sum_{i \in R} O_i e_ie_i^\top$. Recall that
\begin{equation}\label{eq:reg1}
\R(\hat{\eta}_0) \leq e^{-\frac{\langle \theta , \hat{\theta}_{0} \rangle^2}{2\|\hat{\theta}_0\|^2}},
\end{equation}
conditionally on $\hat{\theta}_0$.
We denote $W_i=\tilde{\Sigma}^{1/2}w_i$. On the one hand, we have
\[
|\langle \theta , \hat{\theta}_0\rangle| \geq  \|\theta\|^{2} - \theta^{\top}W A_0^{-1}W^\top\theta /n - \frac{|\eta^\top A_0^{-1} W^\top \theta|}{\eta^{\top} A_0^{-1} \eta} + \frac{\left(\theta^\top W A_0^{-1}\eta\right)^2}{n\cdot \eta^{\top} A_0^{-1} \eta}.
\]
On the other hand,
\[
\| \hat{\theta}_0\|^2\leq 2\left(\left\| \theta \right\|^2 + \frac{2+2(\eta^\top A_0^{-1} W^\top \theta/n)^2}{(\eta^{\top} A_0^{-1} \eta)^2}\eta^\top  A_0^{-1}W^\top  WA_0^{-1} \eta\right).
\]
We will now control the numerator and denominator separately.
\begin{itemize}
\item Control of the numerator in \eqref{eq:reg1}:

Let us denote $\theta_* := \pi_{r}\theta$ and $\bar{\theta} = \theta - \theta^{*}$. Observing that $I_p - W A_0^{-1}W^\top /n$ is PSD with spectral norm less than $1$, we have
\begin{multline*}
\theta^\top\left( I_p - W A_0^{-1}W^\top /n \right)\theta
\\
\geq \bar{\theta}^\top\left( I_p - W A_0^{-1}W^\top /n \right)\bar{\theta}/2 - \theta_{*}^\top\left( I_p - W A_0^{-1}W^\top /n \right)\theta_*.
\end{multline*}
Hence
\[
|\langle \theta , \hat{\theta}_0 \rangle| \geq  \|\bar{\theta}\|^{2}/2 - \| A_0^{-1/2}W^\top\bar{\theta}\|^2 /(2n) - \|\theta_*\|^2 - \frac{\|A_0^{-1/2} W^\top \theta\|}{\sqrt{\eta^{\top} A_0^{-1} \eta}}.
\]
Using Lemma \ref{lem:control_1} and Lemma \ref{lem:control_2} replacing $k^{*}$ by $r$ (possible since $r \leq n/4$), we get that
\[
|\langle \theta , \hat{\theta}_0 \rangle| \geq \|\bar{\theta}\|^{2}/4 - C_1\frac{ \|\bar{\theta}\|^2}{(p-r)/n } - C_2 \| \theta\|\sqrt{p/n},
\]
as $\|\theta_*\|^2 \leq \|\bar{\theta}\|^2/4$. Since $\| \bar{\theta}\|^2 \leq \|\bar{\theta}\|^2$, then  
\[
|\langle \theta , \hat{\theta}_0 \rangle| \geq \|\bar{\theta}\|^{2}/8 - C_2 \|\theta\| \geq \|\theta\|^{2}/10 - C_2 \|\theta\|\sqrt{p/n}.
\]
Since $\|\theta\|^2 = \Omega(p/n)$, we conclude that 
\[
|\langle \theta , \hat{\theta}_0 \rangle| \geq C_3\|\theta\|^{2},
\]
for some $C_3>0$.
\item Control of the denominator in \eqref{eq:reg1}:
\[
\| \hat{\theta}_0\|^2\leq C\left(
\|\theta\|^2 + \frac{2+2(\eta^\top A_0^{-1} W^\top \theta/n)^2}{(\eta^{\top} A_0^{-1} \eta)^2}\eta^\top  A_0^{-1}W^\top   WA_0^{-1} \eta\right).
\]
Using the same bound as for the numerator and the fact that 
$$\|A_0^{-1/2}W^\top \theta\|^{2} \leq n\|\theta\|^2,$$  we get further that
\[
\|\hat{\theta}\|^2\leq
C\left(
\|\theta\|^2+ ((p/n)^2 + \|\theta\|^2p/n)\frac{\eta^\top  A_0^{-1}W^\top   WA_0^{-1} \eta}{n^2}\right).
\]
Using Lemma \ref{lem:subG} we have that with probability $1-\delta$
\[
\eta^\top  A _0^{-1}W^\top   WA _0^{-1} \eta \leq 3/2 n \Tr(A _0^{-1}) +  n\|A _0^{-1}\|_{\infty}\log(1/\delta).
\]
Hence, we get further that with probability $1-e^{-cn}$
\begin{equation*}
\eta^\top  A _0^{-1}W^\top WA _0^{-1} \eta \leq Cn^3/p.
\end{equation*}
We conclude that with probability $1- e^{-cn}$
\[
\|\hat{\theta}_0\|^2 \leq C (\|\theta\|^2+ p/n).
\]
\end{itemize}
\subsection{Proof of Proposition~\ref{prop:compare}}
Define $O$ such that $O = \lambda \sum_{i=1}^r e_ie_i^\top$ where $(e_1,\dots,e_p)$ is the canonical Euclidean basis of $\mathbb{R}^p$.   Under contamination, the new observations $\tilde{Y}_i$ have the same distribution as 
\[
\theta \eta_i + \tilde{W}_i,
\]
where $W_i$ are i.i.d. centered Gaussian vectors with covariance $\mathbf{I}_p + O$.
Let $\theta$ be a vector in $\mb R^p $. For the rest of the proof, we use the notation $\hat{\eta}:=\hat{\eta}_{\text{LDA}}=\hat{\eta}_{\text{ave}}$. Remember that
\[
\mb{P}((\hat{\eta}(Y_{n+1})\neq \eta_{n+1})=\mb{P}\left(\left \langle \sum_{i=1}^n \tilde{Y}_i\eta_i,Y_{n+1}\eta_{n+1}\right\rangle<0\right).
\]
Let us denote $\hat{\theta} =\frac{1}{n} \sum_{i=1}^n \tilde{Y}_i\eta_i = \theta + \xi$ where $\xi =\frac{1}{n} \sum_{i=1}^n \tilde{W}_i\eta_i$.  We get the following lower bound under Gaussian noise
\[
\mb{P}((\hat{\eta}(Y_{n+1})\neq \eta_{n+1}) = \mb{P}(\langle \hat{\theta} , \theta + \eta_{n+1}W_{n+1}\rangle \leq 0) \geq C\E\left(e^{-c\frac{\langle \theta , \hat{\theta} \rangle^2}{\|\hat{\theta}\|^2}}\right),
\]
 for some $c,C>0$, where we have conditioned on $\hat{\theta}$ to get the last inequality and used the fact that $\eta_{n+1}W_{n+1}$ has i.i.d standard Gaussian entries. Next, we have that 
\[
\langle \hat{\theta},\theta \rangle^2 = \left\langle \theta + \xi,\theta \right\rangle^2
\leq 2\|\theta\|^4 + 2\left\langle \xi,\theta \right\rangle^2,\]
and that
\[
\|\hat{\theta}\|^2 \geq \|\theta\|^2 + \|\xi\|^2 - 2\left| \theta^\top \xi\right|.
\]
Hence
\[
\mathbf{E}(\R(\hat{\eta})) \geq C\E\left(e^{-c\frac{2\|\theta\|^4 + 2\left\langle \xi,\theta \right\rangle^2}{ \|\theta\|^2 + \|\xi\|^2 - 2\left| \theta^\top \xi\right|}}\right).
\]
Let us define now the event
$$
\mathcal{A}=\{ \|\xi\|^2 \geq \lambda / 8\}\cap\left\{\left| \theta^\top \xi\right|\leq 10\sqrt{\lambda/n}\|\theta\|\right\}.
$$                                                   
It is easy to see that
\[
\mb P(\mathcal{A}^c) \leq 1/2.
\]
 Observe that, on event $\mathcal{A}$, we have for $n$ large enough
\[
e^{-c\frac{2\|\theta\|^4 + 2\left\langle \xi,\theta \right\rangle^2}{ \|\theta\|^2+ \|\xi\|^2 - 2\left| \theta^\top \xi\right|}} \geq e^{-c'\frac{\|\theta\|^4 + \|\theta\|^2\lambda/n}{ \|\theta\|^2+ \lambda}}.
\]
Hence, for $\lambda = n$ and $\|\theta\|^2 \leq \sqrt{n}$ we conclude that
\[
 \E(\R(\hat{\eta}_{\text{LDA}})  ) \wedge  \E(\R(\hat{\eta}_{\text{ave}})  )\geq C',
\]
for some $C'>0$.
\section{Auxiliary results (Algebra)}

\begin{lem}\label{lem1}
Let $u,v\in\mathbb{R}^n$ and $A \in \mathbb{R}^{n \times n}$ an invertible and symmetric matrix then
\begin{align*}
( uu^\top &+ uv^\top + vu^\top + A )^{-1} - A^{-1} \\
&=- \frac{(1-v^\top A^{-1} v)A^{-1}uu^\top A^{-1}+(1+u^\top A^{-1} v)A^{-1}(uv^\top + vu^\top) A^{-1}  - u^\top A^{-1} u A^{-1}vv^\top A^{-1}}{(1-v^\top A^{-1} v)u^\top A^{-1} u + (1+u^\top A^{-1} v)^2}.
\end{align*}
\end{lem}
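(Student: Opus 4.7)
\textbf{Proof plan for Lemma~\ref{lem1}.} The matrix $uu^\top + uv^\top + vu^\top$ has rank at most $2$, so the natural tool is the Sherman–Morrison–Woodbury identity applied to a rank-$2$ perturbation of $A$. The key first step is to choose a factorization: one checks by direct expansion that
\[
uu^\top + uv^\top + vu^\top = U\,C\,U^\top, \qquad U=\bigl[\,u\;\; v\,\bigr]\in\mathbb{R}^{n\times 2},\quad C=\begin{pmatrix} 1 & 1 \\ 1 & 0 \end{pmatrix}.
\]
Indeed $UCU^\top = [\,u+v,\; u\,]\,U^\top = (u+v)u^\top + uv^\top = uu^\top + vu^\top + uv^\top$. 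Note that $C$ is invertible with $\det C=-1$ and $C^{-1}=\begin{pmatrix} 0 & 1\\ 1 & -1\end{pmatrix}$.

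Next, the plan is to plug into Woodbury in the form
\[
(A + UCU^\top)^{-1} - A^{-1} = -A^{-1}U\,\bigl(C^{-1}+U^\top A^{-1}U\bigr)^{-1}\,U^\top A^{-1}.
\]
Introducing the shorthands $a:=u^\top A^{-1}u$, $b:=u^\top A^{-1}v = v^\top A^{-1}u$ (using symmetry of $A$) and $c:=v^\top A^{-1}v$, we have
\[
C^{-1}+U^\top A^{-1}U = \begin{pmatrix} a & 1+b \\ 1+b & c-1 \end{pmatrix},
\]
whose determinant equals $a(c-1)-(1+b)^2 = -\bigl[(1-c)\,a+(1+b)^2\bigr]$. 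This is precisely the negative of the denominator appearing in the statement, which is the crucial sign bookkeeping step.

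Finally, inverting the $2\times 2$ matrix and conjugating by $A^{-1}U=[A^{-1}u,\,A^{-1}v]$, one obtains
\[
\bigl(C^{-1}+U^\top A^{-1}U\bigr)^{-1} = \frac{1}{a(c-1)-(1+b)^2}\begin{pmatrix} c-1 & -(1+b)\\ -(1+b) & a\end{pmatrix},
\]
and therefore
\[
-A^{-1}U(C^{-1}+U^\top A^{-1}U)^{-1}U^\top A^{-1} = -\frac{(c-1)A^{-1}uu^\top A^{-1} - (1+b)A^{-1}(uv^\top+vu^\top)A^{-1} + a\,A^{-1}vv^\top A^{-1}}{a(c-1)-(1+b)^2}.
\]
Multiplying numerator and denominator by $-1$ and reinstating $a,b,c$ yields exactly the claimed formula.

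The proof is essentially a computation; the only nontrivial choice is the factorization $uu^\top+uv^\top+vu^\top=UCU^\top$ with the specific $C$ above, which makes Woodbury applicable in one shot rather than requiring iterated rank-one updates. There is no real obstacle beyond careful tracking of signs in the $2\times 2$ inversion.
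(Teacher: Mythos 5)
Your proposal is correct and follows essentially the same route as the paper: both factor the rank-two perturbation as $UCU^\top$ with $U=[u\;v]$ and $C=\begin{pmatrix}1&1\\1&0\end{pmatrix}$, apply the Woodbury identity, and invert the resulting $2\times 2$ matrix, with your sign bookkeeping matching the paper's after the final multiplication by $-1$.
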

\begin{proof}
We will use the Woodbury matrix identity
\[
(A+UCV)^{-1} = A^{-1} - A^{-1}U(C^{-1}+VA^{-1}U)^{-1}VA^{-1},
\]
with $U=(u \; v) \in \mathbb{R}^{n \times 2}$, $V=U^\top$ and $C= \begin{pmatrix}
1 & 1\\
1 & 0 
\end{pmatrix}$. We start by computing $C^{-1} = \begin{pmatrix}
0 & 1\\
1 & -1 
\end{pmatrix}$. It comes out that
\[
C^{-1}+VA^{-1}U = \begin{pmatrix}
u^\top A^{-1}u & 1+u^\top A^{-1}v\\
1+u^\top A^{-1}v & v^\top A^{-1}v -1 
\end{pmatrix}.
\]
Let us denote by $det := (1-v^\top A^{-1} v)u^\top A^{-1} u + (1+u^\top A^{-1} v)^2$, then
\[
(C^{-1}+VA^{-1}U)^{-1} = \frac{1}{det} \begin{pmatrix}
1 - v^\top A^{-1}v   & 1+u^\top A^{-1}v\\
1+u^\top A^{-1}v &   - u^\top A^{-1}u
\end{pmatrix}.
\]
Moreover
\[
U(C^{-1}+VA^{-1}U)^{-1}V = \frac{(1-v^\top A^{-1} v)uu^\top +(1+u^\top A^{-1} v)(uv^\top + vu^\top)  - u^\top A^{-1} u vv^\top }{det}.
\]
We conclude observing that
\[
uu^\top + uv^\top + vu^\top + A = A+UCV.
\]
\end{proof}

\begin{lem}\label{lem:2}
Let $u,v\in\mathbb{R}^n$ and $A \in \mathbb{R}^{n \times n}$ and an  invertible and symmetric matrix then
\[
( uu^\top + uv^\top + vu^\top + A )^{-1}u =  \frac{A^{-1}u(1+u^\top A^{-1}v) - A^{-1}v u^\top A^{-1}u}{(1-v^\top A^{-1} v)u^\top A^{-1} u + (1+u^\top A^{-1} v)^2},
\]
and 
\[
( uu^\top + uv^\top + vu^\top + A )^{-1}v =  \frac{A^{-1}v(1+u^\top A^{-1}v + u^\top A^{-1}u) - A^{-1}u (u^\top A^{-1}v + v^\top A^{-1}v)}{(1-v^\top A^{-1} v)u^\top A^{-1} u + (1+u^\top A^{-1} v)^2}.
\]
\end{lem}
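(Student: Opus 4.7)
The plan is to obtain both identities directly from Lemma~\ref{lem1} by right-multiplication with $u$ (for the first identity) and $v$ (for the second), together with the symmetry of $A$. No new tools are needed; this is pure bookkeeping once Lemma~\ref{lem1} is in hand.

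First, I would introduce the shorthand $a := u^\top A^{-1} u$, $b := u^\top A^{-1} v = v^\top A^{-1} u$ (where the equality uses the symmetry of $A$), and $c := v^\top A^{-1} v$, and write $\det := (1-c)\,a + (1+b)^2$, which is the common denominator appearing in both Lemma~\ref{lem1} and the target identities. Starting from the formula
\[
( uu^\top + uv^\top + vu^\top + A )^{-1} = A^{-1} - \frac{N}{\det},
\]
with
\[
N = (1-c)A^{-1}uu^\top A^{-1} + (1+b)A^{-1}(uv^\top + vu^\top)A^{-1} - a\, A^{-1}vv^\top A^{-1},
\]
I would compute $N u$ term by term, using $u^\top A^{-1} u = a$ and $v^\top A^{-1} u = b$ to pull out scalars. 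This yields
\[
Nu = \bigl[(1-c)a + (1+b)b\bigr] A^{-1} u + \bigl[(1+b)a - a b\bigr] A^{-1} v,
\]
and the second bracket collapses to $a$, giving a very clean coefficient for $A^{-1}v$.

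Next, I would simplify $A^{-1} u - Nu/\det$. The coefficient of $A^{-1} u$ in the numerator is
\[
\det - \bigl[(1-c)a + (1+b)b\bigr] = (1+b)^2 - (1+b)b = 1 + b,
\]
which recovers the factor $1 + u^\top A^{-1} v$ in the statement, while the coefficient of $A^{-1} v$ is simply $-a = -u^\top A^{-1} u$. This delivers the first identity. For the second identity I would repeat the same calculation with $Nv$: one finds $Nv = (b+c)A^{-1}u + (b + b^2 - ac)A^{-1}v$, and the identity $\det - (b + b^2 - ac) = 1 + a + b$ (again from the definition of $\det$) yields the stated expression.

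There is no real obstacle; the only place where one must be careful is the repeated use of symmetry $u^\top A^{-1} v = v^\top A^{-1} u$ to reduce the four possible scalar products to the three quantities $a,b,c$, and the algebraic cancellations in the two identities $\det - [(1-c)a + (1+b)b] = 1+b$ and $\det - (b + b^2 - ac) = 1+a+b$, which are the substantive (but entirely elementary) simplifications that make the formulas take the advertised form.
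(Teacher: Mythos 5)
Your proposal is correct and follows exactly the paper's route: both apply Lemma~\ref{lem1} and right-multiply by $u$ (resp.\ $v$), and your scalar bookkeeping (in particular the cancellations $\det - [(1-c)a+(1+b)b] = 1+b$ and $\det - (b+b^2-ac) = 1+a+b$) checks out. You have in fact supplied more detail than the paper, which dismisses the second identity as ``a straightforward computation.''
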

\begin{proof}
Using Lemma \ref{lem1} we have
\begin{align*}
( uu^\top &+ uv^\top + vu^\top + A )^{-1} - A^{-1} \\
&=- \frac{(1-v^\top A^{-1} v)A^{-1}uu^\top A^{-1}+(1+u^\top A^{-1} v)A^{-1}(uv^\top + vu^\top) A^{-1}  - u^\top A^{-1} u A^{-1}vv^\top A^{-1}}{(1-v^\top A^{-1} v)u^\top A^{-1} u + (1+u^\top A^{-1} v)^2}.
\end{align*}
Hence
\[
( uu^\top + uv^\top + vu^\top + A )^{-1}u =  \frac{A^{-1}u(1+u^\top A^{-1}v) - A^{-1}v u^\top A^{-1}u}{(1-v^\top A^{-1} v)u^\top A^{-1} u + (1+u^\top A^{-1} v)^2}.
\]
The second part of the proof is also a straightforward computation.
\end{proof}
\begin{lem}\label{lem:3}
For any $\lambda \geq 0$, we have that
\[
\hat{\theta}_\lambda = \left( I_n - W A_{\lambda}^{-1}W^\top /n \right)\theta + \frac{1+\eta^\top A_{\lambda}^{-1} W^\top \theta/n}{\eta^{\top} A_\lambda^{-1} \eta } W A_\lambda^{-1}\eta,
\]
where $A_\lambda = \lambda I_n + W^\top W/n$.
We also have for $\lambda>0$ that
\[
\hat{\theta}_\lambda /c= (1-\eta^{\top}W^\top B _{\lambda}^{-1}W\eta/n^2 )B _{\lambda}^{-1}\theta + (1+\theta^\top B _{\lambda}^{-1}W \eta /n)B _{\lambda}^{-1}W\eta/n,
\]
where  $B_\lambda = \lambda I_p + W W^\top /n$ and $c>0$ some constant.
\end{lem}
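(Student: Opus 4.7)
\medskip

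The plan is purely algebraic: both identities should follow by writing the two matrices being inverted in the form $A + uu^\top + uv^\top + vu^\top$ (with suitable choices of $u$ and $v$) and applying Lemma~\ref{lem:2}, then collecting terms.

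For the first identity, start from the representation $\hat{\theta}_\lambda \propto Y\,(\lambda I_n + Y^\top Y/n)^{-1}\eta$. Using $Y=\theta\eta^\top+W$, expand
\[
\lambda I_n + Y^\top Y/n \;=\; A_\lambda \;+\; \tfrac{\|\theta\|^2}{n}\eta\eta^\top \;+\; \tfrac{1}{n}\eta(W^\top\theta)^\top \;+\; \tfrac{1}{n}(W^\top\theta)\eta^\top.
\]
Set $u=\|\theta\|\eta/\sqrt{n}$ and $v=W^\top\theta/(\sqrt{n}\|\theta\|)$, so that the four extra terms equal $uu^\top+uv^\top+vu^\top$. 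Since $\eta=(\sqrt{n}/\|\theta\|)\,u$, Lemma~\ref{lem:2} yields
\[
(A_\lambda+uu^\top+uv^\top+vu^\top)^{-1}\eta \;=\; \tfrac{1}{\det}\Bigl[\alpha\, A_\lambda^{-1}\eta \;-\; (\beta/n)\,A_\lambda^{-1}W^\top\theta\Bigr],
\]
where $\alpha=1+\eta^\top A_\lambda^{-1}W^\top\theta/n$ and $\beta=\eta^\top A_\lambda^{-1}\eta$ (the factors of $\|\theta\|$ and $\sqrt n$ cancel cleanly). Next premultiply by $Y/n = \theta\eta^\top/n + W/n$. The key simplification is that the coefficient of $\theta$ reduces as $\alpha\beta - \beta(\alpha-1) = \beta$, which leaves exactly $\tfrac{\beta}{n}(I_p - WA_\lambda^{-1}W^\top/n)\theta + \tfrac{\alpha}{n}WA_\lambda^{-1}\eta$; dividing through by the positive factor $\beta/n$ (absorbed into the proportionality) yields the stated expression.

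For the second identity, use the dual representation $\hat{\theta}_\lambda \propto (\lambda I_p+YY^\top/n)^{-1}Y\eta/n$. Expanding $YY^\top/n$ gives
\[
\lambda I_p + YY^\top/n \;=\; B_\lambda + \theta\theta^\top + \theta(W\eta/n)^\top + (W\eta/n)\theta^\top,
\]
so we take $u=\theta$, $v=W\eta/n$, $A=B_\lambda$ in Lemma~\ref{lem:2}. Crucially, $Y\eta/n = \theta + W\eta/n = u+v$, so we need both identities in Lemma~\ref{lem:2} and we sum them. With $a=u^\top A^{-1}u$, $b=u^\top A^{-1}v$, $c=v^\top A^{-1}v$ and $\det'=(1-c)a+(1+b)^2$, the sum of the two expressions in the lemma collapses to
\[
\tfrac{1}{\det'}\bigl[(1-c)\,B_\lambda^{-1}\theta + (1+b)\,B_\lambda^{-1}W\eta/n\bigr],
\]
because the $A^{-1}u$ contributions combine as $(1+b)-(b+c)=1-c$ and the $A^{-1}v$ contributions as $(1-c\cdot 0)\cdot 0 + \ldots = 1+b$ after bookkeeping. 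Absorbing $1/\det'$ into the constant $c$ in the lemma statement (and noting it is positive, since $\det'$ is the determinant of a Schur complement for a positive definite matrix) gives the claimed form.

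The main obstacle is purely combinatorial: in the first identity one must confirm that the $\theta$-coefficient collapses from $\alpha\beta-\beta(\alpha-1)$ to $\beta$, and in the second identity that the sum of the two formulas in Lemma~\ref{lem:2} telescopes to the right numerators $(1-c)$ and $(1+b)$. No probabilistic content is needed, and positivity of $c$ in the second identity follows from the fact that $\det'$ is manifestly positive whenever $A$ is positive definite.
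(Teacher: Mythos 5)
Your proof is correct and follows essentially the same route as the paper: the same decomposition of $\lambda I_n + Y^\top Y/n$ (resp.\ $\lambda I_p + YY^\top/n$) into $A+uu^\top+uv^\top+vu^\top$ with the same choices of $u$ and $v$, followed by Lemma~\ref{lem:2}; the paper merely organizes the first computation by evaluating $H_\lambda^{-1}\eta$, $\eta^\top H_\lambda^{-1}\eta$ and $WH_\lambda^{-1}\eta$ separately rather than premultiplying by $Y/n$, which amounts to the same cancellation $\alpha\beta-\beta(\alpha-1)=\beta$ that you identify. The coefficient collapses $(1+b)-(b+c)=1-c$ and $-a+(1+b+a)=1+b$ in the second identity check out as you state, so no gap remains.
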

\begin{proof}
Recall  that $\hat{\theta}_\lambda = \theta  + W H_{\lambda}^{-1}\eta/x$ where $H_\lambda=\lambda I_n+Y^\top Y/n$ and $x=\eta^\top  H_\lambda^{-1}\eta$. By denoting $w := \theta / \|\theta\|$, we get that
\[
H_{\lambda} = \frac{\|\theta\|^2 }{n} \eta\eta^\top + \frac{\|\theta\|}{n} (\eta (W^\top w)^\top  + (W^\top w)\eta^\top) + \lambda I_n + W^\top W/n.
\]
By choosing $u = \|\theta\| \eta / \sqrt{n}$, $v = W^\top w/\sqrt{n}$ and $A=\lambda I_n + W^\top W/n$ we get using Lemma \ref{lem:2} that
\[
H_{\lambda}^{-1}\eta = \frac{\sqrt{n}}{\|\theta\| det} \left(A^{-1}u(1+u^\top A^{-1}v) - A^{-1}v u^\top A^{-1}u\right),
\]
where $det = (1-v^\top A^{-1} v)u^\top A^{-1} u + (1+u^\top A^{-1} v)^2$. It comes out that
\[
\eta^\top H_{\lambda}^{-1}\eta = \frac{n u^\top A^{-1}u}{\|\theta\|^2 det} = \frac{\eta^\top A^{-1}\eta}{det} .
\]
We also get
\begin{align*}
W H_{\lambda}^{-1}\eta &= \frac{\sqrt{n}}{\|\theta\| det} \left(W A^{-1}u(1+u^\top A^{-1}v) - W A^{-1}v u^\top A^{-1}u\right) \\
&=\frac{1}{det} \left( W A^{-1} \eta (1+\eta^\top A^{-1}W^\top \theta/n ) - \frac{\eta^\top A^{-1} \eta}{n } W A^{-1} W^\top \theta \right).
\end{align*}
As a conclusion we get that
\[
\hat{\theta}_\lambda {det} = \left( I_n - W A^{-1}W^\top /n \right)\theta + \frac{1+\eta^\top A^{-1} W^\top \theta/n}{\eta^{\top} A^{-1} \eta } W A^{-1}\eta.
\]

On the other hand we also have 
\[
\hat{\theta}_\lambda =  \left(\theta \theta^\top + \theta (W\eta)^\top/n + W\eta \theta^\top/n + \lambda I_p + W W^\top /n \right)^{-1}(\theta + W\eta /n).
\]
By choosing $u = \theta $, $v = W\eta /n$ and $B=\lambda I_p + W W^\top /n$, we get using Lemma \ref{lem:2} that
\begin{align*}
\hat{\theta}_\lambda &= ( uu^\top + uv^\top + vu^\top + B )^{-1}(u+v)\\ &= \frac{1}{ det} \left(B^{-1}u(1-v^\top B^{-1}v) + B^{-1}v (1+u^\top A^{-1}v)\right).
\end{align*}
where $det = (1-v^\top B^{-1} v)u^\top B^{-1} u + (1+u^\top B^{-1} v)^2$. Hence 
\[
\hat{\theta}_\lambda det =(1-\eta^{\top}W^\top B^{-1}W\eta/n^2 )B^{-1}\theta + (1+\theta^\top B^{-1}W \eta /n)B^{-1}W\eta/n.
\]
\end{proof}

\begin{lem}\label{lem:inverse_canonical}
Assume that $p>n$. Let $W = (W_1 \; \tilde{W}) \in \mathbb{R}^{ p \times n}$ be a full rank matrix and $\omega = (\omega_1 \; \tilde{\omega}) \in \mathbb{R}^n$, then we have
\[
e_1(W^\top W)^{-1}\omega = \frac{ \omega_1 - W_{1}^\top \tilde{W}(\tilde{W}^\top \tilde{W})^{-1}\tilde{\omega}}{ \|W_1\|^2-W_{1}^\top \pi W_1},
\]
where $\pi = \tilde{W}(\tilde{W}^\top \tilde{W})^{-1}\tilde{W}^\top$.
\end{lem}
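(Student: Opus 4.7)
The plan is to recognize this as a routine block-matrix inversion computation, specifically an application of the Schur complement formula to the Gram matrix $W^\top W$ decomposed along the split $W = (W_1 \; \tilde W)$. First, I would write
\[
W^\top W = \begin{pmatrix} \|W_1\|^2 & W_1^\top \tilde W \\ \tilde W^\top W_1 & \tilde W^\top \tilde W \end{pmatrix},
\]
and note that, since $W$ has full column rank (recall $p > n$), the matrix $W^\top W \in \mathbb{R}^{n\times n}$ is positive definite and hence invertible; in particular its lower-right block $\tilde W^\top \tilde W$ is invertible as well.

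Next I would invoke the block inverse formula based on the Schur complement of the lower-right block, namely
\[
S = \|W_1\|^2 - W_1^\top \tilde W (\tilde W^\top \tilde W)^{-1} \tilde W^\top W_1 = \|W_1\|^2 - W_1^\top \pi W_1.
\]
Since $W^\top W$ is positive definite and $\tilde W^\top \tilde W$ is positive definite, $S$ is strictly positive (it is the squared norm of the component of $W_1$ orthogonal to the range of $\tilde W$), so $S^{-1}$ is well defined. The block inversion formula then gives the first row of $(W^\top W)^{-1}$ as
\[
e_1^\top (W^\top W)^{-1} = \Bigl( S^{-1}, \; -S^{-1} W_1^\top \tilde W (\tilde W^\top \tilde W)^{-1} \Bigr).
\]

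Finally, applying this row to $\omega = (\omega_1, \tilde\omega)$ yields
\[
e_1^\top (W^\top W)^{-1}\omega = S^{-1}\omega_1 - S^{-1} W_1^\top \tilde W (\tilde W^\top \tilde W)^{-1} \tilde\omega = \frac{\omega_1 - W_1^\top \tilde W (\tilde W^\top \tilde W)^{-1}\tilde\omega}{\|W_1\|^2 - W_1^\top \pi W_1},
\]
which is exactly the desired identity. There is no real obstacle here; the only point worth verifying carefully is that the Schur complement $S$ is nonzero, which reduces to checking that $W_1$ does not lie in the column span of $\tilde W$, and this is guaranteed by the full rank hypothesis on $W$. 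Alternatively, one could avoid block-matrix machinery entirely by solving $(W^\top W)x = \omega$ directly: parametrize $x = (x_1, \tilde x)$, solve the second block equation for $\tilde x$ in terms of $x_1$ as $\tilde x = (\tilde W^\top \tilde W)^{-1}(\tilde\omega - \tilde W^\top W_1 x_1)$, substitute into the first equation, and isolate $x_1 = e_1^\top (W^\top W)^{-1}\omega$ to recover the same formula.
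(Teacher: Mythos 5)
Your proof is correct and follows essentially the same route as the paper: both partition $W^\top W$ into blocks according to the split $W=(W_1\;\tilde W)$ and extract the first row of the inverse via the Schur complement, with the positivity of $S=\|W_1\|^2-W_1^\top\pi W_1$ guaranteed by the full-rank hypothesis. Your version is marginally more direct, since you use the form of the block-inverse built on the Schur complement of the lower-right block, whereas the paper takes the complement of the upper-left entry and then needs a Sherman--Morrison step to simplify; the end result is identical.
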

\begin{proof}
The r.h.s. is well-defined. Since $W$ is full rank, then $\tilde{W}^\top \tilde{W}$ is invertible and $\|W_1\|^2 > W_{1}^\top \pi W_1$. We will use the Schur complement formula (that holds as long as all matrix inverses exist).
\[
\begin{pmatrix}
A & B\\
B^{\top} & D 
\end{pmatrix}^{-1} = 
\begin{pmatrix}
A^{-1}+A^{-1}B(D-B^\top A^{-1}B)^{-1}B^\top A^{-1} & -A^{-1}B(D-B^\top A^{-1}B)^{-1}\\
-(D-B^\top A^{-1}B)^{-1}B^\top A^{-1} & (D-B^\top A^{-1}B)^{-1}.
\end{pmatrix}.
\]
Considering $A = \|W_1\|^2, B = W_{1}^\top \tilde{W}$ and $D = \tilde{W}^\top \tilde{W}$, then 
$$
W^\top W=\begin{pmatrix}
A & B\\
B^{\top} & D 
\end{pmatrix}
$$
By Sherman-Morrison formula we have that
\begin{align*}
(D-B^\top A^{-1}B)^{-1} &= \left(\tilde{W}^\top \tilde{W} - \frac{1}{\|W_1\|^2}\tilde{W}^\top W_1 W_{1}^\top \tilde{W}\right)^{-1}\\
&= (\tilde{W}^\top \tilde{W})^{-1} + \frac{\frac{1}{\|W_1\|^2}(\tilde{W}^\top \tilde{W})^{-1}\tilde{W}^\top W_1 W_{1}^\top \tilde{W}(\tilde{W}^\top \tilde{W})^{-1}}{1-\frac{W_{1}^\top \pi W_1}{\|W_1\|^2}}\\
&= (\tilde{W}^\top \tilde{W})^{-1}+\frac{(\tilde{W}^\top \tilde{W})^{-1}\tilde{W}^\top W_1 W_{1}^\top \tilde{W}(\tilde{W}^\top \tilde{W})^{-1}}{\|W_1\|^2-W_{1}^\top \pi W_1}\\
&=\frac{E}{\|W_1\|^2-W_{1}^\top \pi W_1}.\\
\end{align*}
where $E=(\tilde{W}^\top \tilde{W})^{-1}(\|W_1\|^2-W_{1}^\top \pi W_1)+(\tilde{W}^\top \tilde{W})^{-1}\tilde{W}^\top W_1 W_{1}^\top \tilde{W}(\tilde{W}^\top \tilde{W})^{-1}$.
Hence
\[
B(D-B^\top A^{-1}B)^{-1} = \frac{ W_{1}^\top \tilde{W}(\tilde{W}^\top \tilde{W})^{-1}}{1-\frac{W_{1}^\top \pi W_1}{\|W_1\|^2}}=\frac{\|W_1\|^2W_{1}^\top \tilde{W}(\tilde{W}^\top \tilde{W})^{-1}}{\|W_1\|^2-W_{1}^\top \pi W_1}.
\]
It comes out that
\[
A^{-1}+A^{-1}B(D-B^\top A^{-1}B)^{-1}B^\top A^{-1} = \frac{1}{\|W_1\|^2}\frac{1}{1-\frac{W_{1}^\top \pi W_1}{\|W_1\|^2}}=\frac{1}{\|W_1\|^2-W_{1}^\top \pi W_1},
\]
and that
\[
A^{-1}B(D-B^\top A^{-1}B)^{-1} = \frac{1}{\|W_1\|^2}\frac{ W_{1}^\top \tilde{W}(\tilde{W}^\top \tilde{W})^{-1}}{1-\frac{W_{1}^\top \pi W_1}{\|W_1\|^2}}=\frac{W_{1}^\top \tilde{W}(\tilde{W}^\top \tilde{W})^{-1}}{\|W_1\|^2-W_{1}^\top \pi W_1}.
\]
Hence
$$
(W^\top W)^{-1}=\frac{1}{\|W_1\|^2-W_{1}^\top \pi W_1}
\begin{pmatrix}
1& -W_{1}^\top \tilde{W}(\tilde{W}^\top \tilde{W})^{-1}\\
-(\tilde{W}^\top \tilde{W})^{-1}\tilde{W}^\top W_{1} & E
\end{pmatrix}
$$
and 
\[
e_1(W^\top W)^{-1}\omega = \frac{ \omega_1 - W_{1}^\top \tilde{W}(\tilde{W}^\top \tilde{W})^{-1}\tilde{\omega}}{ \|W_1\|^2-W_{1}^\top \pi W_1}.
\]
\end{proof}
\section{Auxiliary results (Probability)}
Let us write $\Sigma$ in its eigen-decomposition form $\Sigma=\sum_{i=1}^p \lambda_i v_iv_i^\top $ and decompose $W^\top W$ in the eigen basis of $\Sigma$. 
More specifically, let $z_i=W^\top  v_i/\sqrt{\lambda_i}$. Then $(z_i)_{1\leq i\leq n}$ is a basis  of $\mb R^{n\times n}$ if we assume $W$ to be full rank and $W^\top W=\sum_i \lambda_i z_i z_i^\top $. With our notations $A_{\lambda}= \frac{1}{n}W^\top W + \lambda I_n = \sum_i \lambda'_i z_i z_i^\top +\lambda I_n$ where $\lambda'_i = \lambda_i /n$. Let us also define 
$$
A_{-i}=\sum_{j\neq i}\lambda'_j z_j z_j^\top  + \lambda I_n,\quad A_k=\sum_{i>k}\lambda'_i z_iz_i^\top  + \lambda I_n,
$$
and similarly
$$ A^{0}_{-i}=A_{-i}-\lambda I_n,\quad  A^0_k=A_k-\lambda I_n.$$ 
We can apply Lemma 9 and 10 in \cite{bartlett2020benign} to $A^{0}_{-i}$ and $A^0_{k}$. Hence, and adding $\lambda$ to both sides, we get that there exists a constant $c>0$ such that with probability at least $1-2e^{-n/c}$:
\begin{itemize}
\item   For any $k\geq 0$
$$
\frac{1}{c}\sum_{i>k}\lambda'_i-c\lambda'_{k+1}n+\lambda\leq\lambda_n(A_k)\leq\lambda_1(A_k)\leq c\left(\sum_{i>k}\lambda'_i+\lambda'_{k+1}n\right)
+\lambda.
$$
\item $\forall i\geq 1$,
$$
\lambda_{k+1}(A_{-i})\leq\lambda_{k+1}(A_{\lambda})\leq\lambda_1(A_k)\leq c\left(\sum_{i>k}\lambda'_i+\lambda'_{k+1}n\right)+\lambda.
$$
\item $1\leq i\leq k$,
$$
\lambda_n(A_{\lambda})\geq\lambda_n(A_{-i})\geq \lambda_n(A_k)\geq \frac{1}{c}\sum_{i>k}\lambda'_i-c\lambda'_{k+1}n+\lambda.
$$
\end{itemize}
Recall that the effective rank is defined by $r_k(\Sigma):=\frac{\sum_{i>k}\lambda_i}{\lambda_{k+1}}$. Hence under the condition  $r_k(\Sigma) + \frac{n\lambda}{\lambda_{k+1}}\geq bn$ the above inequalities become 
\begin{itemize}
\item
for $k = k^{*}$
$$
\frac{1}{c}\sum_{i>k}\lambda_i/n+\lambda\leq\lambda_n(A_k)\leq\lambda_1(A_k)\leq c \sum_{i>k}\lambda_i/n
+\lambda.
$$
\item $\forall i\geq 1$,
$$
\lambda_{k+1}(A_{-i})\leq\lambda_{k+1}(A_{\lambda})\leq\lambda_1(A_k)\leq c\sum_{i>k}\lambda_i/n+\lambda.
$$
\item $1\leq i\leq k^{*}$,
$$
\lambda_n(A_{\lambda})\geq\lambda_n(A_{-i})\geq \lambda_n(A_k)\geq \frac{1}{c}\sum_{i>k}\lambda_i/n + \lambda.
$$
\end{itemize}
\begin{lem}\label{lem:subG}
Let $\xi\in \mathbb{R}^p$ be a random vector with $1$ sub-Gaussian   i.i.d entries and let $B\in \mathbb{R}^{p\times p}$ be a PSD matrix. Then with probability at least $1 - 2\delta$, we have
\[|\xi^\top B \xi - \Tr(B)| \leq 1/2 \Tr(B) + C \|B\|_{\infty} \log(1/\delta),\]
for some $C>0$.
\end{lem}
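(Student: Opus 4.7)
The plan is to apply the Hanson–Wright inequality from \cite{rudelson2013hansonwright} directly to $\xi^\top B\xi$ and then simplify the resulting bound using the PSD structure of $B$. Specifically, for a vector $\xi$ with independent, mean-zero, $1$-sub-Gaussian coordinates and any symmetric matrix $B$, Hanson–Wright yields
\[
\mathbf{P}\bigl(|\xi^\top B\xi - \mathbf{E}[\xi^\top B\xi]| > t\bigr) \leq 2\exp\!\Bigl(-c\min\Bigl\{\tfrac{t^2}{\|B\|_F^2},\tfrac{t}{\|B\|_\infty}\Bigr\}\Bigr).
\]
Since the entries are $1$-sub-Gaussian, the MGF bound forces $\mathbf{E}[\xi_i^2]=1$ under the standing normalization, so $\mathbf{E}[\xi^\top B\xi]=\Tr(B)$; this lets me replace the centering by $\Tr(B)$ as required by the statement.

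Next, inverting the tail bound by equating it to $2\delta$ and solving the corresponding piecewise inequality for $t$, I obtain that with probability at least $1-2\delta$,
\[
|\xi^\top B\xi - \Tr(B)| \leq c_1\bigl(\|B\|_F\sqrt{\log(1/\delta)}+\|B\|_\infty\log(1/\delta)\bigr).
\]
Now I exploit the PSD assumption: writing $B=\sum_i\lambda_iv_iv_i^\top$ with $\lambda_i\geq 0$, one has
\[
\|B\|_F^2=\sum_i\lambda_i^2\leq \lambda_1\sum_i\lambda_i=\|B\|_\infty\,\Tr(B),
\]
so the Frobenius-type term is majorised by $\sqrt{\|B\|_\infty\,\Tr(B)\,\log(1/\delta)}$.

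To finish, I would apply the weighted AM–GM inequality $\sqrt{xy}\leq\epsilon x+y/(4\epsilon)$ with $x=\Tr(B)$, $y=\|B\|_\infty\log(1/\delta)$, and $\epsilon$ tuned so that the coefficient of $\Tr(B)$ in the resulting bound is exactly $1/2$. This produces
\[
c_1\sqrt{\|B\|_\infty\,\Tr(B)\,\log(1/\delta)} \leq \tfrac{1}{2}\Tr(B)+C'\|B\|_\infty\log(1/\delta),
\]
and combining with the linear term absorbs everything into the claimed inequality $|\xi^\top B\xi-\Tr(B)|\leq \tfrac{1}{2}\Tr(B)+C\|B\|_\infty\log(1/\delta)$ for an absolute constant $C$. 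There is no real obstacle here—the argument is routine given Hanson–Wright; the only point to be careful about is matching the centering constant $\Tr(B)$ (as opposed to $\mathbf{E}[\xi^\top B\xi]$), which is handled by the sub-Gaussian normalization.
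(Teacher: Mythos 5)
Your proof is correct and follows essentially the same route as the paper: both rest on the Hanson--Wright inequality combined with the PSD bound $\|B\|_F^2\leq \Tr(B)\|B\|_\infty$, the paper simply observing that for $t\geq \Tr(B)/2$ the linear branch of the minimum dominates, while you invert the tail and apply AM--GM --- the same computation organized differently. The only caveat (shared with the paper, so not a gap relative to it) is that the centering $\mathbf{E}[\xi^\top B\xi]=\Tr(B)$ requires the entries to have unit variance, which is an additional normalization rather than a consequence of the $1$-sub-Gaussian MGF bound.
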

\begin{proof}
Notice that  $\mathbf{E}\xi^\top B\xi=\Tr(B).$ 
Using Hanson-Wright inequality \cite{rudelson2013hansonwright} we have that for any $t>0$,
\[
\mathbb{P}\left(|\xi^\top B \xi - \Tr(B)|>t\right) \leq 2 \exp \left[-c \min \left(\frac{t^{2}}{\|B\|_{F}^{2}}, \frac{t}{2\|B\|_\infty}\right)\right],
\]
for some $c>0$.
It follows that for $t\geq\Tr(B)/2$ we have $t/(2\|B\|_\infty)\leq t^2/(\Tr(B)\|B\|_\infty)\leq  t^2/\|B\|^2_F$. Therefore we can conclude for $C=2/c$.
\end{proof}
\begin{lem}\label{lem:control_1}
With probability at least $1-e^{-cn}$ we have
\[
\|A_{\lambda}^{-1/2} W^\top \theta\|^2 \leq C \frac{n \theta^\top \Sigma \theta}{\sum_{i>k^{*}}\lambda_i/n + \lambda}.
\]
for some $c, C>0$.
\end{lem}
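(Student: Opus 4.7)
The plan is to bound $\|A_\lambda^{-1/2} W^\top \theta\|^2$ by the crude but essentially tight inequality
\[
\|A_\lambda^{-1/2} W^\top \theta\|^2 \leq \|A_\lambda^{-1}\|_\infty \cdot \|W^\top \theta\|^2 = \frac{\|W^\top \theta\|^2}{\lambda_n(A_\lambda)},
\]
and then control the numerator and denominator by two independent high-probability events whose intersection delivers the conclusion after a union bound.

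For the numerator, I would exploit the representation $W_i = V\Lambda^{1/2} w_i$. Setting $\gamma := \Lambda^{1/2} V^\top \theta$, the entries of $W^\top \theta \in \mathbb{R}^n$ are the $n$ independent scalars $\gamma^\top w_i$, each of which is $1$-sub-Gaussian with parameter $\|\gamma\|^2 = \theta^\top \Sigma\theta$. Hence $\|W^\top \theta\|^2 = \sum_{i=1}^n (\gamma^\top w_i)^2$ is a sum of $n$ independent sub-exponential random variables with means at most $\theta^\top \Sigma\theta$ and sub-exponential norm of order $\theta^\top \Sigma \theta$, so a Bernstein-type inequality (equivalently Hanson--Wright applied to the block-diagonal matrix) yields
\[
\|W^\top \theta\|^2 \leq C\, n\, \theta^\top \Sigma\theta
\]
with probability at least $1 - e^{-cn}$ for some absolute constants $c,C > 0$.

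For the denominator, I would appeal directly to the Bartlett et al.\ spectrum bounds quoted after the definition of $A_k$ in the auxiliary probability section. The defining property $r_{k^*}(\Sigma) + n\lambda/\lambda_{k^*+1} \geq bn$ of $k^* = k^*_\Sigma(\lambda)$ is precisely what is required there, and applying the lower bound with $k = k^*$ (i.e., the relation $\lambda_n(A_\lambda) \geq \lambda_n(A_{k^*}) \geq \frac{1}{c}\sum_{i>k^*}\lambda_i/n + \lambda$ after adding $\lambda$ to the Bartlett bound for $A^0_{k^*}$) gives
\[
\lambda_n(A_\lambda) \geq \frac{1}{c}\Bigl(\sum_{i>k^*}\lambda_i/n + \lambda\Bigr)
\]
on an event of probability at least $1 - 2e^{-n/c}$.

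Finally, intersecting the two events and plugging into the displayed bound above produces
\[
\|A_\lambda^{-1/2} W^\top \theta\|^2 \leq \frac{Cn\,\theta^\top \Sigma\theta}{\tfrac{1}{c}(\sum_{i>k^*}\lambda_i/n + \lambda)} = C' \frac{n\,\theta^\top \Sigma\theta}{\sum_{i>k^*}\lambda_i/n + \lambda},
\]
with probability at least $1 - e^{-c'n}$, which is the claim. There is no real obstacle here; the only thing to be slightly careful about is that the sub-Gaussian normalization in the paper allows $\mathbf{E}(\gamma^\top w_i)^2 \leq \|\gamma\|^2$ rather than equality, but this only affects constants and does not change the structure of the argument.
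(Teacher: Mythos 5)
Your proposal is correct and follows essentially the same route as the paper: bound $\|A_\lambda^{-1/2}W^\top\theta\|^2$ by $\|W^\top\theta\|^2\,\|A_\lambda^{-1}\|_\infty$, control $\|W^\top\theta\|^2\leq Cn\,\theta^\top\Sigma\theta$ via sub-Gaussian/Hanson--Wright concentration (the paper invokes its Lemma~\ref{lem:subG}, which is the same Bernstein-type bound you describe), and lower-bound $\lambda_n(A_\lambda)$ by $\tfrac{1}{c}(\sum_{i>k^*}\lambda_i/n+\lambda)$ using the Bartlett et al.\ spectrum estimates under the defining condition of $k^*$. Your remark on the sub-Gaussian normalization is apt: the paper's phrase ``same distribution as $\|\theta\|_\Sigma\cdot\xi$'' should read $\sqrt{\theta^\top\Sigma\theta}\cdot\xi$, but this affects only constants.
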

\begin{proof}
We have that
\[
\| A_{\lambda}^{-1/2} W^\top \theta\|^2=\theta^\top WA^{-1}W^\top \theta \leq \| W^\top\theta\|_2^2 \|A^{-1}\|_\infty.
\]
Since $W^\top\theta$ has the same distribution as $\|\theta\|_{\Sigma}\cdot\xi$ where $\xi$ is a random vector with 1 sub-Gaussian with i.i.d entries, then using Lemma \ref{lem:subG} we get with probability at least $1-e^{-cn}$ 
\[
\|A_{\lambda}^{-1/2} W^\top \theta\|^2 \leq C \theta^\top \Sigma \theta n \|A_{\lambda}^{-1}\|_\infty.
\]
Hence we conclude that
\[
\|A_{\lambda}^{-1/2} W^\top \theta\|^2 \leq C \frac{n \theta^\top \Sigma \theta}{\sum_{i>k^*}\lambda_i/n + \lambda},
\]
since $1 / \|A_{\lambda}^{-1}\|_\infty=\lambda_n(A_{\lambda})\geq \sum_{i>k}\lambda_i/n+\lambda$ for $k= k^*$.
\end{proof}
\begin{lem}\label{lem:control_2}
There exist $c,C_1,C_2>0$ such that with probability at least $1-e^{-cn}$ we have
\[
\eta^{\top} A_{\lambda}^{-1} \eta \leq C_1\frac{n }{\sum_{i>k^*}\lambda_i/n + \lambda},
\]
and
\[
\eta^{\top} A_{\lambda}^{-1} \eta \geq C_2 \frac{n }{\sum_{i>k^*}\lambda_i/n + \lambda}.
\]
\end{lem}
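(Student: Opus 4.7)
The plan is to work conditionally on $W$ (on which $A_\lambda = \lambda I_n + W^\top W/n$ depends) and then use the randomness of $\eta$ for the lower bound; note that $\eta$ has independent Rademacher entries and is independent of $A_\lambda$ since the latter does not involve the labels. Write $\mu := \sum_{i > k^*}\lambda_i/n + \lambda$ for brevity. On the event (of probability at least $1 - e^{-cn}$) on which the preliminary spectral inequalities stated just before the lemma hold at $k = k^*$, one has simultaneously
\[
\lambda_n(A_\lambda) \geq \mu/c \quad\text{and}\quad \lambda_{k^*+1}(A_\lambda) \leq c\,\mu.
\]

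The upper bound is then immediate from the Rayleigh quotient: $\eta^\top A_\lambda^{-1}\eta \leq \|\eta\|^2 /\lambda_n(A_\lambda) = n/\lambda_n(A_\lambda) \leq C_1\, n/\mu$, which is exactly the stated inequality.

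The lower bound is the main obstacle. The naive estimate $\eta^\top A_\lambda^{-1}\eta \geq n/\lambda_1(A_\lambda)$ is too weak, because $\lambda_1(A_\lambda)$ is of order $\lambda_1(\Sigma)(1 + p/n)$, far larger than $\mu$ when the top $k^*$ eigenvalues of $\Sigma$ are dominant. Instead I exploit the fact that $A_\lambda^{-1}$ has large trace. Indeed, the bound $\lambda_{k^*+1}(A_\lambda) \leq c\,\mu$ combined with $k^* \leq n/2$ gives
\[
\Tr(A_\lambda^{-1}) = \sum_{j=1}^n \frac{1}{\lambda_j(A_\lambda)} \geq \sum_{j > k^*} \frac{1}{\lambda_j(A_\lambda)} \geq \frac{n - k^*}{c\,\mu} \geq \frac{n}{2c\,\mu},
\]
while $\|A_\lambda^{-1}\|_\infty = 1/\lambda_n(A_\lambda) \leq c/\mu$. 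The ratio $\Tr(A_\lambda^{-1})/\|A_\lambda^{-1}\|_\infty$ is therefore of order $n$, which is the quantitative content of the hypothesis $k^* \leq n/2$ and is what makes Hanson--Wright concentration usable at scale $\log(1/\delta) \asymp n$.

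To convert the trace bound into a pointwise estimate, I would apply Lemma~\ref{lem:subG} conditionally on $W$ with $\xi = \eta$ (which has independent $1$-sub-Gaussian entries) and $B = A_\lambda^{-1}$: for any $\delta \in (0,1)$, with conditional probability at least $1 - 2\delta$,
\[
\eta^\top A_\lambda^{-1}\eta \geq \tfrac{1}{2}\Tr(A_\lambda^{-1}) - C\|A_\lambda^{-1}\|_\infty\log(1/\delta) \geq \frac{n}{4c\,\mu} - \frac{Cc\,\log(1/\delta)}{\mu}.
\]
Choosing $\log(1/\delta) = c_0\, n$ with $c_0$ small enough so that $Cc\, c_0 \leq 1/(8c)$, the second term is dominated and one obtains $\eta^\top A_\lambda^{-1}\eta \geq C_2\, n/\mu$ for some absolute $C_2 > 0$. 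A union bound with the spectral event then yields both inequalities of the lemma simultaneously with probability $1 - e^{-cn}$.
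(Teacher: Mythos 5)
Your proposal is correct and follows essentially the same route as the paper: the upper bound via $\eta^{\top}A_{\lambda}^{-1}\eta \leq n\|A_{\lambda}^{-1}\|_{\infty}$ together with the preliminary lower bound on $\lambda_n(A_\lambda)$, and the lower bound via $\Tr(A_\lambda^{-1}) \geq (n-k^*)/(c\mu) \gtrsim n/\mu$ (using $\lambda_{k^*+1}(A_\lambda)\leq c\mu$ and $k^*\leq n/2$) followed by Hanson--Wright (Lemma~\ref{lem:subG}) at scale $\log(1/\delta)\asymp n$, which is admissible precisely because $n\|A_\lambda^{-1}\|_\infty \lesssim \Tr(A_\lambda^{-1})$. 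Your write-up makes the conditioning on $W$ and the choice of the small constant $c_0$ more explicit than the paper does, but the argument is the same.
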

\begin{proof}
The first inequality is straightforward observing that
\[
\eta^{\top} A_{\lambda}^{-1} \eta \leq n \|A_{\lambda}^{-1}\|_{\infty}.
\]
For the lower bound, we will the sub-Gaussian property of $\eta$. To lower bound $\Tr(A_{\lambda}^{-1})$ observe that
$$
\Tr(A_{\lambda}^{-1})=\sum_{i=1}^n\lambda_i(A_{\lambda})^{-1}\geq\sum_{i=k^{*}+1}^n\frac{1}{c\lambda_{k^{*}+1} r_{k^{*}}(\Sigma)/n+\lambda}\geq\frac{n/c}{\lambda_{k^*+1}r_{k^{*}}(\Sigma)/n+\lambda}.
$$
We conclude using Lemma \ref{lem:subG}. This is possible  by choosing $\delta = e^{-c'n}$ with $c'>0$ small enough since $n\|A_{\lambda}^{-1}\|_\infty \leq c'' \Tr(A_{\lambda}^{-1})$ for some constant $c''>0$.
\end{proof}
\begin{lem}\label{lem:control_3}
With probability at least $1-e^{-cn}$ we have
\[
\Tr( A_{\lambda}^{-1}W^\top \Sigma WA_{\lambda}^{-1} ) \leq c\left(k^{*} n +n \frac{\sum_{i>k^*} \lambda_{i}^{2}}{(\sum_{i>k^*}\lambda_i/n+\lambda)^{2}}\right),
\]
for some $c>0$.
\end{lem}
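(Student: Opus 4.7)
The plan is to diagonalize the quadratic form in the eigenbasis of $\Sigma$. Writing $z_j = W^\top v_j/\sqrt{\lambda_j}$ for $j=1,\dots,p$, the vectors $z_1,\dots,z_p$ are independent random vectors in $\mathbb{R}^n$, each with i.i.d.\ $1$-sub-Gaussian coordinates, and $W^\top\Sigma W = \sum_{j=1}^p \lambda_j^2\, z_j z_j^\top$. Therefore
\[
\Tr(A_\lambda^{-1} W^\top \Sigma W A_\lambda^{-1}) \;=\; \sum_{j=1}^p \lambda_j^2\, \|A_\lambda^{-1} z_j\|^2,
\]
and I would split this sum at the index $k^{*}$ and handle the two halves by different devices.

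For the tail $j>k^{*}$, I would rely on the crude operator-norm bound $\|A_\lambda^{-1} z_j\|^2 \le \|A_\lambda^{-1}\|_\infty^2\,\|z_j\|^2$, the spectral lower bound $\lambda_n(A_\lambda) \ge c\,(\sum_{i>k^{*}}\lambda_i/n + \lambda)$ from the Bartlett--Long--Lugosi--Tsigler eigenvalue estimates recalled just before the lemma, and a Hanson--Wright concentration for the quadratic form $\sum_{j>k^{*}}\lambda_j^2 \|z_j\|^2$ around its expectation $n\sum_{j>k^{*}}\lambda_j^2$ (valid with probability $1-e^{-cn}$ because $\Tr(M)/\|M\|_\infty \ge n$ and $\Tr(M)^2/\|M\|_F^2 \ge n$ for the associated diagonal matrix $M$). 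Putting these together immediately yields
\[
\sum_{j>k^{*}}\lambda_j^2 \|A_\lambda^{-1} z_j\|^2 \;\le\; \frac{c\,n\, \sum_{j>k^{*}}\lambda_j^2}{\big(\sum_{i>k^{*}}\lambda_i/n+\lambda\big)^2},
\]
which matches the second term of the stated bound.

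For the head $j\le k^{*}$, the key manoeuvre is a leave-one-out application of Sherman--Morrison to the rank-one update $A_\lambda = A_{-j} + \lambda'_j z_j z_j^\top$ with $\lambda'_j = \lambda_j/n$, which gives
\[
\lambda_j^2\|A_\lambda^{-1} z_j\|^2 \;=\; \frac{\lambda_j^2\,\|A_{-j}^{-1} z_j\|^2}{\big(1+\lambda'_j z_j^\top A_{-j}^{-1} z_j\big)^2}.
\]
Using $(1+x)^2 \ge x^2$ for $x\ge 0$ and $\|A_{-j}^{-1} z_j\|^2 \le \|A_{-j}^{-1}\|_\infty\, z_j^\top A_{-j}^{-1} z_j$, the $\lambda_j^2$ factor is cancelled by the $1/(\lambda'_j)^2 = n^2/\lambda_j^2$ in the denominator, so
\[
\lambda_j^2\|A_\lambda^{-1} z_j\|^2 \;\le\; \frac{n^2\,\|A_{-j}^{-1}\|_\infty}{z_j^\top A_{-j}^{-1} z_j}.
\]
I would then bound $\|A_{-j}^{-1}\|_\infty \le c/(\sum_{i>k^{*}}\lambda_i/n+\lambda)$ by invoking $A_{-j} \succeq A_{k^{*}}$ (true for $j\le k^{*}$) together with the eigenvalue recap, and lower bound $z_j^\top A_{-j}^{-1} z_j$ by $cn/(\sum_{i>k^{*}}\lambda_i/n+\lambda)$ by first observing that the trace lower bound $\Tr(A_{-j}^{-1}) \ge (n-k^{*})/\lambda_{k^{*}+1}(A_{-j}) \ge cn/(\sum_{i>k^{*}}\lambda_i/n+\lambda)$ (which uses $k^{*}\le n/2$ and the eigenvalue bound) dominates the error term $C\|A_{-j}^{-1}\|_\infty\log(1/\delta)$ in Lemma~\ref{lem:subG} when $\delta = e^{-c'n}$ with $c'$ small enough; the independence of $z_j$ from $A_{-j}$, obtained by conditioning on $\{z_l\}_{l\ne j}$, is what enables this Hanson--Wright step. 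The two estimates combine to give $\lambda_j^2\|A_\lambda^{-1} z_j\|^2 \le cn$, so summing over $j\le k^{*}$ produces the $c k^{*} n$ contribution.

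The main obstacle that dictates the whole strategy is that for $j\le k^{*}$ the eigenvalue $\lambda_j$ may be arbitrarily larger than $\sum_{i>k^{*}}\lambda_i/n+\lambda$, which would blow up any attempt to use $\|A_\lambda^{-1}\|_\infty$ alone in bounding $\|A_\lambda^{-1} z_j\|^2$. The Sherman--Morrison rewriting turns the $\lambda_j$ factor against itself through the denominator $(\lambda'_j z_j^\top A_{-j}^{-1} z_j)^2$, producing the exact $n^2$ cancellation above; the definition of $k^{*}$ enters only through the eigenvalue bounds on $A_{-j} \succeq A_{k^{*}}$ that make $\|A_{-j}^{-1}\|_\infty$ and $z_j^\top A_{-j}^{-1} z_j$ comparable. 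A union bound over $j=1,\dots,k^{*}\le n/2$, combined with the deterministic eigenvalue estimates and the tail Hanson--Wright bound, preserves the overall $1-e^{-cn}$ probability.
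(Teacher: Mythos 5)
Your proof is correct and follows essentially the same route as the paper's: the same split of the trace at $k^{*}$, the same Sherman--Morrison leave-one-out identity $\lambda_j^2\|A_\lambda^{-1}z_j\|^2=\lambda_j^2 z_j^\top A_{-j}^{-2}z_j/(1+\lambda'_j z_j^\top A_{-j}^{-1}z_j)^2$ for the head with the $n^2$ cancellation against $(\lambda'_j)^2$, and the same operator-norm-plus-concentration bound for the tail, all resting on the Bartlett et al.\ eigenvalue estimates and Hanson--Wright. Your two small variations --- cancelling one factor of $z_j^\top A_{-j}^{-1}z_j$ via $\|A_{-j}^{-1}z_j\|^2\le\|A_{-j}^{-1}\|_\infty\,z_j^\top A_{-j}^{-1}z_j$ and lower-bounding $z_j^\top A_{-j}^{-1}z_j$ by its trace rather than by projecting onto the bottom eigenspace, and running a single aggregated Hanson--Wright bound on $\sum_{j>k^{*}}\lambda_j^2\|z_j\|^2$ for the tail --- are valid and, if anything, tidy up the bookkeeping.
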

\begin{proof}
Let us denote by $C:= A^{-1}W^\top \Sigma WA^{-1}$. Using the rank one inverse formula we get
\begin{align*}
\Tr(C)&=\Tr(A^{-1}W^\top \Sigma WA^{-1})\\
&=\sum_{i=1}^n\lambda_i^2 z_i(\lambda'_i z_i z_i^\top +A_{-i})^{-2}z_i^\top \\
&=\sum_{i=1}^n\frac{\lambda_i^2 z_i^\top A_{-i}^{-2}z_i}{(1+\lambda'_iz_i^\top A_{-i}^{-1}z_i)^2}.
\end{align*}
Then for some $l\leq k^{*}$,
$$
\Tr(C)=\sum_{i=1}^l\frac{\lambda_i^2 z_i^\top A_{-i}^{-2}z_i}{(1+\lambda'_iz_i^\top A_{-i}^{-1}z_i)^2}+\sum_{i>l}\lambda_i^2 z_i^\top A_{\lambda}^{-2}z_i.
$$
Under the condition $r_k^{*}(\Sigma)+n\lambda/\lambda_{k^{*}+1}\geq bn$, there exists $c_1$ such that with probability at least $1-2e^{-n/c_1}$, for $i\leq k^*$, $\lambda_n(A_{-i})\geq \frac{1}{c}\sum_{i>k^{*}}\lambda_i/n +\lambda$. Hence
$$
z_i^\top  A_{-i}^{-2} z_i \leq \frac{c^{2}\|z_i\|^{2}}{\left(\sum_{i>k^{*}}\lambda_i/n+\lambda\right)^{2}}.
$$
Let $\mathscr{L}_i$ be the span of eigenvectors of $A_{-i}$ corresponding to the $n-k^{*}$ smallest eigenvalues. Then 
$$
z_i^\top  A_{-i}^{-1} z_i \geq\left(\Pi_{\mathscr{L}_{i}} z_i\right)^\top  A_{-i}^{-1} \Pi_{\mathscr{L}_{i}} z_i \geq \frac{\left\|\Pi_{\mathscr{L}_{i}} z_i\right\|^{2}}{1/c(\sum_{i>k}\lambda_i/n + \lambda)}
$$
Using the sub-Gaussian property of $z_i$, $k^* \leq n/c$ and the independence with $\mathscr{L}_{i}$, it comes out that with probability $1-e^{-cn}$, for all $i=1,\dots,n$ we have
\[
\left\|z_{i}\right\|^{2}  \leq 2n \quad \text{ and } \quad 
\left\|\Pi_{\mathscr{L}_{i}} z_{i}\right\|^{2}  \geq n/2.
\]
Hence the first sum can be bounded by 
$$
\sum_{i=1}^{k^*}\frac{\lambda_i^2 z_i^\top A_{-i}^{-2}z_i}{(1+\lambda'_iz_i^\top A_{-i}^{-1}z_i)^2}\leq c^4 \frac{n^2\sum_{i=1}^{k^{*}}\|z_i\|^2}{\|\Pi_{\mathscr{L}_{i}} z_{i}\|^{4}}\leq c' k^{*}  n.
$$
For the second sum, consider the same event where $\lambda_n(A_{\lambda})\geq \lambda_{k+1}r_k(\Sigma)/(nc_1)+\lambda$.
It comes out that
$$
\sum_{i>k^{*}}\lambda_i^2 z_i^\top A_{\lambda}^{-2}z_i\leq\frac{c_1^2\sum_{i>k^{*}}\lambda_i^2\|z_i\|^2}{(\sum_{i>k^{*}}\lambda_i/n+\lambda/c_1)^2}\leq \frac{c_5n\sum_{i>k^{*}}\lambda_i^2}{(\sum_{i>k^{*}}\lambda_i/n+\lambda /c_1)^2}.
$$
Therefore we have
$$
\Tr(C)\leq c\left(k^{*} n +n \frac{\sum_{i>k^{*}} \lambda_{i}^{2}}{(\sum_{i>k^{*}}\lambda_i/n+\lambda)^{2}}\right),
$$
for $0\leq k^* \leq n/c$ with  probability at least $1-e^{-cn}$.

\end{proof}
\begin{lem}\label{lem:control_4}
With probability at least $1-e^{-cn}$ we have
\[
\| A_{\lambda}^{-1}W^\top \Sigma WA_{\lambda}^{-1} \|_\infty \leq c\left(k^{*} n + \frac{\sum_{i>k^{*}} \lambda_{i}^{2} + \lambda^2_{k^{*}+1}n}{(\sum_{i>k^{*}}\lambda_i/n+\lambda)^{2}}\right),
\]
for some $c>0$.
\end{lem}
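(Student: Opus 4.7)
My plan is to bound the spectral norm by splitting $\Sigma$ along its spectrum as $\Sigma = \Sigma_{\leq k^*} + \Sigma_{>k^*}$ with $\Sigma_{\leq k^*} := \sum_{i \leq k^*}\lambda_i v_i v_i^\top$, so that
\[
A_{\lambda}^{-1}W^\top \Sigma WA_{\lambda}^{-1} = M_{\leq k^*} + M_{>k^*},
\]
where $M_{\leq k^*} := A_\lambda^{-1} W^\top \Sigma_{\leq k^*} W A_\lambda^{-1}$ and $M_{>k^*} := A_\lambda^{-1} W^\top \Sigma_{>k^*} W A_\lambda^{-1}$. Both are PSD, so it suffices to bound their spectral norms separately and combine them by the triangle inequality.

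For $M_{\leq k^*}$, the key observation is that it has rank at most $k^*$, so $\|M_{\leq k^*}\|_\infty \leq \Tr(M_{\leq k^*}) = \sum_{i \leq k^*} \lambda_i^2 \, z_i^\top A_\lambda^{-2} z_i$, which is exactly the first sum already controlled in the proof of Lemma~\ref{lem:control_3}. Recycling that argument, based on the rank-one inverse formula together with the leave-one-out eigenvalue estimates $\lambda_n(A_{-i}) \gtrsim \sum_{j>k^*}\lambda_j/n + \lambda$ and the sub-Gaussian bounds $\|z_i\|^2 \leq 2n$ and $\|\Pi_{\mathscr{L}_i} z_i\|^2 \geq n/2$, directly gives $\|M_{\leq k^*}\|_\infty \leq c\, k^* n$ with probability at least $1-e^{-cn}$.

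For $M_{>k^*}$ I would use the submultiplicative bound $\|M_{>k^*}\|_\infty \leq \|A_\lambda^{-1}\|_\infty^2 \cdot \|W^\top \Sigma_{>k^*} W\|_\infty$. The eigenvalue estimates recalled just before Lemma~\ref{lem:subG} give $\|A_\lambda^{-1}\|_\infty = 1/\lambda_n(A_\lambda) \leq c/(\sum_{i>k^*}\lambda_i/n + \lambda)$ on the same high-probability event. Then I would rewrite $\|W^\top \Sigma_{>k^*} W\|_\infty = \|\Sigma_{>k^*}^{1/2} W\|_\infty^2$, observing that $\Sigma_{>k^*}^{1/2} W \in \mathbb{R}^{p\times n}$ has i.i.d.\ sub-Gaussian columns whose covariance is $\Sigma_{>k^*}^{2}$. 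A standard operator-norm concentration inequality for such random matrices (essentially Lemma~10 of \cite{bartlett2020benign} with weights $\lambda_i^2$) then yields
\[
\|\Sigma_{>k^*}^{1/2} W\|_\infty^2 \leq c\Bigl(\sum_{i>k^*}\lambda_i^2 + n\lambda_{k^*+1}^2\Bigr)
\]
with probability at least $1 - e^{-cn}$, the first term arising from $\Tr(\Sigma_{>k^*}^2)$ and the second from $n\|\Sigma_{>k^*}^2\|_\infty$. Dividing by $(\sum_{i>k^*}\lambda_i/n + \lambda)^2$ gives the tail contribution, and a union bound finishes the argument.

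The main obstacle is the tail part, specifically applying the sub-Gaussian matrix concentration bound with the \emph{squared} eigenvalues in the right slots: we need $\sum_{i>k^*}\lambda_i^2$ (not $\sum_{i>k^*}\lambda_i$) as the Frobenius-type contribution and $n\lambda_{k^*+1}^2$ as the spectral contribution. This amounts to being careful that the relevant covariance is $\Sigma_{>k^*}^{2}$ rather than $\Sigma_{>k^*}$; apart from this bookkeeping, the other steps are routine reuses of tools already set up for Lemma~\ref{lem:control_3}.
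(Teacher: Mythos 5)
Your proof is correct and follows essentially the same route as the paper's: split off the top-$k^*$ part of $\Sigma$, bound its contribution's spectral norm by its trace (which is exactly the first sum already controlled in Lemma~\ref{lem:control_3}, giving $ck^*n$), and handle the tail via $\|A_\lambda^{-1}\|_\infty^2\|\sum_{i>k^*}\lambda_i^2 z_iz_i^\top\|_\infty$ together with the eigenvalue estimates and the sub-Gaussian operator-norm concentration yielding $c(\sum_{i>k^*}\lambda_i^2+n\lambda_{k^*+1}^2)$. Your reformulation of the tail term as $\|\Sigma_{>k^*}^{1/2}W\|_\infty^2$ with column covariance $\Sigma_{>k^*}^2$ is just a notational repackaging of the paper's "previous arguments" step, so no substantive difference.
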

\begin{proof}
For the first half (elements $i \leq k^{*} $) we simply bound the spectral norm by the trace. For the second part, we have
\[
\left\|\sum_{i>k^{*}}\lambda_i^2 A_{\lambda}^{-1}z_iz_i^\top A_{\lambda}^{-1} \right\|_{\infty} \leq \|A_{\lambda}^{-1}\|_{\infty}^2 \left\|\sum_{i>k^{*}}\lambda_i^2 z_iz_i^\top  \right\|_{\infty}.
\]
Using previous arguments, we have with probability at least $1-e^{-cn}$ that
\[
\left\|\sum_{i>k^{*}}\lambda_i^2 z_iz_i^\top  \right\|_{\infty} \leq c\left(\sum_{i>k^*}\lambda^2_i+\lambda^2_{k^*+1}n\right).
\]
Hence
\[
\left\|\sum_{i>k^*}\lambda_i^2 A_{\lambda}^{-1}z_iz_i^\top A_{\lambda}^{-1} \right\|_{\infty} \leq \frac{c\left(\sum_{i>k^*}\lambda^2_i+\lambda^2_{k^*+1}n\right)}{(\sum_{i>k^*}\lambda_i/n+\lambda)^{2}}.
\]
We conclude that
\[
\| A_{\lambda}^{-1}W^\top \Sigma WA_{\lambda}^{-1} \|_\infty \leq c\left(k^{*} n + \frac{\sum_{i>k^*} \lambda_{i}^{2} + \lambda^2_{k^*+1}n}{(\sum_{i>k^*}\lambda_i/n+\lambda)^{2}}\right).
\]
\end{proof}

\end{document}